\documentclass[12pt]{amsart}
\usepackage{amsfonts, amsmath, amsthm}
\usepackage{graphicx}
\usepackage{fourier}
\usepackage{geometry}
\usepackage{color}

\newtheorem{pro}{Proposition}[section]
\newtheorem{thm}[pro]{Theorem}
\newtheorem{lem}[pro]{Lemma}

\newtheorem{question}[pro]{Question}

\newtheorem{clm}[pro]{Claim}

\newtheorem{ass}{Assertion}

\newtheorem{cor}[pro]{Corollary}

\theoremstyle{definition}
\newtheorem{prob}[pro]{Problem}
\newtheorem{dfn}[pro]{Definition}
\newtheorem{notation}[pro]{Notation}
\newtheorem{dfns}[pro]{Definitions}
\newtheorem{example}[pro]{Example}
\newtheorem{rmkk}[pro]{Remark}

\theoremstyle{remark}
\newtheorem*{rmk}{Remark}

\newcommand{\fff}{{\ensuremath{f_K}}}

\newcommand{\scc}{simple closed curve}
\newcommand{\s}{\Sigma}

\newcommand{\sft}{{swallow follow torus}}

\newcommand{\del}{\partial}

\newcommand{\gr}{\mbox{gr}}

\def\qbc{\ensuremath{Q^{(b,c)}_{\beta,\gamma}}}

\def\genbd[#1]{\ensuremath{6-2#1}}

\title{The growth rate of the tunnel number of m-small knots}

\date{\today}
\address{Department of Mathematics, Nara Women's University
Kitauoya Nishimachi, Nara 630-8506, Japan}
\address{Department of mathematical Sciences, University of
Arkansas, Fayetteville, AR 72701}
\email{tsuyoshi@cc.nara-wu.ac.jp}
\email{yoav@uark.edu}
\author{Tsuyoshi Kobayashi}
\author{Yo'av Rieck}
\thanks{The first names author was supported by Grant-in-Aid for scientific
  research, JSPS grant number 25400091.  
  This work was partially supported by a grant from the Simons Foundation (283495  to Yo'av Rieck).}

\begin{document}

\subjclass{57M99, 57M25}%
\keywords{3-manifold, knots, Heegaard splittings, tunnel number}%

\date{\today}%
\begin{abstract}
In~\cite{kobayashi-rieck-growth-rate} the authors defined the {\it growth rate of the tunnel number of knots},
an invariant that measures that asymptotic behavior of the tunnel number under connected sum.  
In this paper we calculate the  growth rate of the tunnel number of m-small knots in terms of their bridge indices.
\end{abstract}
\maketitle
%

\tableofcontents

\part{Introduction and background material}

\section{Introduction}
\label{sec:introduction}

Let $M$ be a compact connected orientable 3-manifold and $K \subset M$ a knot.  $K$ is
called {\it admissible} if $g(E(K)) > g(M)$ and inadmissible otherwise (throughout this
paper  $E(\cdot)$ denotes knot exterior and 
$g(\cdot)$ denotes the Heegaard genus; see
Section~\ref{sec:background} for these and other basic definitions).  Let $nK$ denote the connected
sum of $n$ copies of $K$.  
In \cite{kobayashi-rieck-growth-rate} the authors
defined the {\it growth rate} of the tunnel number of $K$ to be:
$$\gr_t(K) = \limsup_{n \to \infty} \frac{g(E(nK)) - ng(E(K)) + n-1}{n-1}$$   
The main result of \cite{kobayashi-rieck-growth-rate} shows  that if $K$ is admissible then
$\gr_t(K) <1$, and $\gr_t(K) = 1$ otherwise.  This concept was the key to constructing a counterexample
to Morimoto's Conjecture~\cite{KobayashiRieckMC} and~\cite{KobayashiRieckAnnouncement}.
Unless explicitly stated otherwise, 
all knots considered are assumed to be admissible (note that this is always the case for knots in the three sphere $S^3$).

In this paper we continue our investigation of the growth rate of the tunnel
number.  In Part~2 we give an upper bound on the growth rate of admissible
knots (this is an improvement of the bound given in
\cite{kobayashi-rieck-growth-rate}), and in Part~3 we obtain a lower bound on 
the growth rate of admissible m-small knots (a knot is called  
{\it m-small} if its meridian is not a boundary slope of an essential surface).  With this we obtain
an exact calculation of the growth rate of m-small knots.
Before stating this result we define the following notation that will
be used extensively throughout the paper:

\begin{notation}\label{notation:bridge indices}
Let $K \subset M$ be an admissible knot.  We denote $g(E(K)) - g(M)$ 
by $g$ and for $i=1,\dots,g$ we denote the bridge index of $K$ with respect to
Heegaard surfaces of genus $g(E(K)) - i$ by $b_i^*$.  
That is, $b_i^*$ is the minimal integer so that $K$ admits a $b_i^*$ bridge position with respect to some 
Heegaard surface of $M$ of genus $g(E(K)) - i$; we call such a decomposition a {\it
$(g(E(K)) - i, b^{*}_{i})$ decomposition}.    
Note that for a knot $K \subset S^{3}$ we have that $g = g(E(K))$, $b_{g}^*(K)$ is the bridge index of $K$, and $b_{g-1}^{*}(K)$
is the torus bridge index of $K$.  
We note that, for any knot $K \subset M$,
$b_{i}^{*}$ forms an increasing 
sequence of positive integers: 
$0 < b_{1}^{*} < \cdots < b_{g}^{*}$.
To see this, fix $i \geq 1$ and let $\s$ be a Heegaard surface that realizes the
bridge index $b_i^*$, that is, $\s$ is a genus $g(E(K)) - i$ Heegaard surface for $M$
with respect to which $K$ has bridge index $b_i^*$.  By tubing $\s$ once (see Definition~\ref{dfn:tubing})
we obtain a Heegaard surface of genus $g(E(K)) - (i-1)$ that realizes a $(g(E(K)) - (i-1),b_{i}^*-1)$
decomposition for $K$.  This shows that $b_{i-1}^* \leq b_i^*-1$.  
\end{notation}

We are now ready to state:

\begin{thm}
  \label{thm:main}
Let $M$ be a compact connected orientable 3-manifold and
$K \subset M$ be an admissible knot.  
Then $\gr_t(K) \leq  \min_{i=1,\dots,g}
\big\{1- \frac{i}{b_i^*}\big\}$.  If, in addition, $K$ is m-small then equality holds:
$$\gr_t(K) = \min_{i=1,\dots, g} \bigg\{1- \frac{i}{b_i^*}\bigg\}$$
Moreover, for m-small knots the limit of \ $\frac{g(E(nK)) - ng(E(K)) +
  n-1}{n-1}$ exists.
\end{thm}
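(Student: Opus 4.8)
The plan is to prove the upper bound and the lower bound separately, and then observe that for m-small knots they coincide and moreover the $\limsup$ in the definition of $\gr_t$ is actually a limit.

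For the \textbf{upper bound} $\gr_t(K) \leq \min_{i}\{1 - i/b_i^*\}$, fix an index $i \in \{1,\dots,g\}$ and a $(g(E(K))-i, b_i^*)$ decomposition of $K$. The idea is to build an efficient Heegaard surface for $E(nK)$ by amalgamating $n$ copies of this bridge decomposition. Concretely, from a $(g(E(K))-i, b_i^*)$ decomposition one can, for each copy of $K$ in the connected sum, produce a piece of Heegaard surface; when we glue the $n$ copies along the connect-sum spheres (equivalently, remove a ball around each summing sphere and glue in the exteriors), the bridges of consecutive summands can be tubed together, so that roughly $n(b_i^* - 1)$ tubes are needed rather than $nb_i^*$. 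Counting genus: each summand contributes genus $g(E(K)) - i$, the bridge arcs contribute, and the tubing across the $n-1$ spheres saves us. The arithmetic will yield $g(E(nK)) \leq n(g(E(K)) - i) + (n b_i^* - (n-1)) + (\text{lower order})$, which after plugging into the definition of $\gr_t$ and taking $\limsup$ gives $\gr_t(K) \leq 1 - i/b_i^*$. Since $i$ was arbitrary, we get the minimum. This is essentially a refinement of the construction in~\cite{kobayashi-rieck-growth-rate}; the main subtlety is checking that the amalgamated surface really is Heegaard (i.e., that both sides are handlebodies/compression bodies), which is where one uses that a bridge decomposition of $K$ with respect to $\s$ splits $E(K)$ after drilling into two compression bodies.

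For the \textbf{lower bound}, which requires $K$ to be m-small, the strategy is to take a minimal genus Heegaard surface $\s_n$ of $E(nK)$ and analyze how it meets the $n-1$ essential summing annuli (the meridional annuli in $E(nK)$ coming from the connect-sum spheres). Because $K$ is m-small, its meridian is not a boundary slope of an essential surface, and this rigidity forces $\s_n$, after isotopy, to intersect each summing annulus in a controlled way — roughly, $\s_n$ can be arranged to induce on each copy of $E(K)$ a bridge decomposition with respect to some Heegaard surface of $M$. Thkeeping track of the genus lost at each of the $n-1$ gluing regions, one shows that for each $n$ there is an index $i$ (independent of $n$, after passing to a subsequence) with $g(E(nK)) \geq n(g(E(K)) - i) + n b_i^* - (n-1) - O(1)$, giving $\gr_t(K) \geq 1 - i/b_i^* \geq \min_j\{1 - j/b_j^*\}$. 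The \textbf{main obstacle} is exactly this step: controlling the intersection of a minimal-genus Heegaard surface of $E(nK)$ with the summing annuli and extracting from it a genuine bridge decomposition of each summand with respect to a Heegaard surface of $M$. This is where the full strength of the m-small hypothesis (via incompressibility/boundary-slope arguments, thin position, and Haken-type sphere/annulus swallowing arguments) must be deployed, and it is presumably the technical heart of Part~3.

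Finally, the \textbf{existence of the limit} for m-small knots follows by a subadditivity-type argument: one shows that the sequence $a_n := g(E(nK)) - ng(E(K)) + n - 1$ satisfies an inequality of the form $a_{m+n} \leq a_m + a_n$ (gluing an efficient splitting of $E(mK)$ to one of $E(nK)$ across one summing sphere, with a single tube's worth of loss already accounted for by the $+1$ in the numerator's normalization), together with the matching lower bound $a_n \geq (n-1)\min_i\{1 - i/b_i^*\} - O(1)$ from the previous paragraph. By Fekete's lemma $a_n/(n-1)$ converges, and its limit equals the $\limsup$, which is $\min_{i=1,\dots,g}\{1 - i/b_i^*\}$.
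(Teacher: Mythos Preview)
Your upper-bound construction has a genuine arithmetic/conceptual error that propagates through the whole argument. If every summand is given a $(g(E(K))-i,\,b_i^*)$ decomposition and these are connect-summed as you describe, the resulting bridge decomposition of $nK$ is an $(n(g(E(K))-i),\,nb_i^*-(n-1))$ decomposition, and tubing gives
\[
g(E(nK)) \leq n(g(E(K))-i) + nb_i^* - (n-1).
\]
Plugging this into $S_n$ yields $S_n \leq n(b_i^*-i)/(n-1) \to b_i^* - i$, \emph{not} $1 - i/b_i^*$. Unless $b_i^* = i$ (equivalently $b_1^*=1$), the bound $b_i^*-i$ is strictly weaker than $1-i/b_i^*$; for instance for $K_{MSY}$ with $i=1$, $b_1^*=2$ your construction gives $\gr_t \leq 1$, which is vacuous. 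The same mis-scaling appears in your lower-bound inequality $g(E(nK)) \geq n(g(E(K))-i) + nb_i^* - (n-1) - O(1)$, which would also give $b_i^*-i$.

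The missing idea is that the decomposition of $E(nK)$ must be \emph{asymmetric}. The paper decomposes $E(nK)$ along $n-1$ swallow follow tori into pieces $E(K)^{(c_1)},\dots,E(K)^{(c_n)}$ with $\sum c_j = n-1$; the upper bound comes from choosing only $k_i \approx (n-1)/b_i^*$ of the $c_j$'s equal to $b_i^*$ and the rest equal to $0$. Each piece with $c_j = b_i^*$ has $g(E(K)^{(b_i^*)}) \leq g(E(K)) - i + b_i^*$, saving $i$ units of genus while absorbing $b_i^*$ units of the constraint $\sum c_j = n-1$; this is where the ratio $i/b_i^*$ comes from. For the lower bound the paper shows (via iterated weak reduction to swallow follow tori, which uses m-smallness) that \emph{every} minimal-genus splitting arises this way for \emph{some} choice of $(c_1,\dots,c_n)$, and then computes $g(E(K)^{(c)})$ exactly via the Strong Hopf--Haken Annulus Theorem. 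The problem of minimizing $\sum g(E(K)^{(c_j)})$ over all admissible $(c_j)$ becomes a linear-programming problem whose optimum is attained at a vertex, giving exactly $1 - i_0/b_{i_0}^*$ for the optimal index $i_0$.

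Your Fekete argument also does not go through: amalgamating splittings of $E(mK)$ and $E(nK)$ along a swallow follow torus gives only $g(E((m+n)K)) \leq g(E(mK)^{(1)}) + g(E(nK)) - 1 \leq g(E(mK)) + g(E(nK))$, hence $a_{m+n} \leq a_m + a_n + 1$, which is not subadditive. The paper instead proves existence of the limit by an explicit two-sided squeeze on $S_n$.
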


As noted in Notation~\ref{notation:bridge indices}, the indices $b_i^*$ form an increasing series of
positive integers.  It follows that $b_i^* \geq i$; moreover, $b_i^* = i$ implies that $b_1^* = 1$.
Applying this to an index $i$ that realizes that the equality 
$\gr_t(K) = 1- \frac{i}{b_i^*}$ we obtain the following simple and useful consequence of 
Theorem~\ref{thm:main} that strengthens the main result of~\cite{kobayashi-rieck-growth-rate}
in the case of m-small knots:

\begin{cor}
\label{cor:bounds}
If $K \subset M$ is an admissible m-small knot, then
$$0 \leq \gr(K) <1$$
Moreover, $\gr(K) = 0$ if and only if $b_1^* = 1$.
\end{cor}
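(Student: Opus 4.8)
The plan is to read everything off Theorem~\ref{thm:main}, which for an m-small knot gives $\gr_t(K) = \min_{i=1,\dots,g}\{1 - i/b_i^*\}$, combined with the elementary observation recorded in Notation~\ref{notation:bridge indices} that $0 < b_1^* < b_2^* < \cdots < b_g^*$ is a strictly increasing sequence of positive integers. Note that $g \geq 1$ since $K$ is admissible, so the minimum is over a nonempty index set, and $\gr_t(K) = \gr(K)$ is well defined.

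First I would fix an index $i \in \{1,\dots,g\}$ realizing the minimum, so that $\gr_t(K) = 1 - i/b_i^*$. Strict monotonicity of the $b_j^*$ gives $b_i^* \geq b_1^* + (i-1) \geq i$, hence $i/b_i^* \leq 1$ and therefore $\gr_t(K) \geq 0$. On the other hand $i \geq 1$ and $b_i^*$ is a finite positive integer, so $i/b_i^* > 0$, and thus $\gr_t(K) = 1 - i/b_i^* < 1$. This establishes $0 \leq \gr_t(K) < 1$.

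For the characterization of the vanishing case, observe that $\gr_t(K) = 0$ holds precisely when $b_i^* = i$ for the realizing index $i$. If $b_i^* = i$, then $b_1^* < \cdots < b_i^* = i$ are $i$ distinct positive integers bounded above by $i$, which forces $b_j^* = j$ for every $j \leq i$, and in particular $b_1^* = 1$. Conversely, if $b_1^* = 1$ then the term with index $1$ equals $1 - 1/1 = 0$, so the minimum is $\leq 0$; together with $\gr_t(K) \geq 0$ from the previous step this forces $\gr_t(K) = 0$. Hence $\gr_t(K) = 0$ if and only if $b_1^* = 1$. I do not expect any genuine obstacle here: the statement is a purely arithmetic consequence of Theorem~\ref{thm:main}, and the only point requiring a moment's care is the implication $b_i^* = i \Rightarrow b_1^* = 1$, which simply exploits the absence of slack in a strictly increasing integer sequence terminating at $i$.
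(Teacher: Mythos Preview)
Your proof is correct and follows essentially the same approach as the paper: both deduce the bounds and the equality characterization directly from Theorem~\ref{thm:main} together with the elementary fact from Notation~\ref{notation:bridge indices} that the $b_i^*$ form a strictly increasing sequence of positive integers (hence $b_i^* \geq i$, with equality forcing $b_1^* = 1$). The paper states this reasoning in the paragraph immediately preceding the corollary rather than as a formal proof, but the content is the same.
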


There are several results about the spectrum of the growth rate and we summarize them here. 
It is well known that there exist manifolds $M$ that admit minimal genus Heegaard splittings 
$\Sigma$ of genus at least 2 and of  Hempel distance at least 3.
We fix such $M$ and $\Sigma$ and for simplicity we assume that $M$ is closed.   
Let $C$ be a handlebody obtained by cutting $M$ along $\Sigma$ and 
$K$ a core of $C$, that is, $K$ is a core of a solid torus obtained by cutting $C$
along appropriately chosen meridian disks.
Then $\Sigma$ is a Heegaard surface for $E(K)$; it follows that  $K$ is inadmissible. 
Clearly, the Hempel distance 
does not go down after drilling $K$.  Hence the Hempel distance of $\Sigma \subset E(K)$ 
is at least 3.  It is a well known consequence of Thurston-Perelman's Geometrization Theorem
that manifolds that admit a Heegaard surface of genus at least 2 and Hempel
distance at least 3 are hyperbolic.  Thus $K \subset M$ is a hypebolic
knot in a hyperbolic manifold.   As mentioned above, the growth rate
of inadmissible knots is 1.
This proves existence of hyperbolic knots in hyperbolic manifolds
with growth rate 1.
It was shown in \cite{kobayashi-rieck-growth-rate} that torus knots and 
2-bridge knots have growth rate 0.  
Kobayashi and Saito~\cite{KobayashiSaito} constructed knots with growth rate $-1/2$.
Theorem~\ref{thm:main} enables us to calculate the growth rate  of the knots constructed 
by Morimoto, Sakuma and Yokota in \cite{skuma-morimoto-yokata}
(perhaps with finitely many exceptions), which we denote by $K_{MSY}$.
We explain this here.   The knots $K_{MSY}$ enjoy the following properties:

\begin{enumerate}
\item  $K_{MSY}$ are hyperbolic and m-small: 
this was announced by by Morimoto in a preprint available at~\cite{morimoto3}.

\item $g(E(K_{MSY})) = 2$: this was proved by 
Morimoto, Sakuma, and Yokota~\cite{skuma-morimoto-yokata}.

\item  $b_1^{*}(K_{MSY}) = 2$ (in other words, the torus bridge index of $K_{MSY}$ is 2):
it was shown in \cite{skuma-morimoto-yokata} that $b_{1}^{*} > 1$, and 
it is easy to observe that $b_{1}^{*} \le 2$ (see, for example, 
\cite{kobayashi-rieck-growth-rate}).

\item  $b_2^{*}(K_{MSY}) \geq 4$ (in other words, the bridge index of $K_{MSY}$ is at least 4):
since $b_{2}^{*}(K_{MSY}) > b_{1}^{*}(K_{MSY})$,  we only need to exclude the possibility
$b_{2}^{*}(K_{MSY}) = 3$.  Assume for a contradiction that $b_{2}^{*}(K_{MSY}) = 3$.
Then $K_{MSY}$ is a tunnel number 1, 3-bridge knot.
Kim~\cite{kim}  proved that every tunnel number 1,
3-bridge knot has torus bridge index 1, contradicting the previous point.
Recently R Bowman, S Taylor and A Zupan~\cite{BowmanTaylorZupan} showed that
$ b_{2}^{*}(K_{MSY}) = 7$ for all but finitely many of the knots $K_{MSY}$
(see Remak~\ref{rmk:RecentWork}).
\end{enumerate}

Using these facts, Theorem~\ref{thm:main} implies that $\gr_t(K_{MSY}) = 1/2$.  This is the first example
of knots with growth rate in the open interval $(0,1)$ and provides partial answer to questions
posed in~\cite{kobayashi-rieck-growth-rate}.
In summary we have the following; we emphasize that only~(4) is new: 

\begin{cor}
The following  holds:
\begin{enumerate}
\item There exist hyperbolic knots in hyperbolic manifolds with growth rate 1.
\item There exist hyperbolic knots in $S^{3}$ with growth rate 0.
\item There exist knots in $S^{3}$ with growth rate -1/2.
\item There exist hyperbolic knots in $S^{3}$ with growth rate 1/2.
\end{enumerate}
\end{cor}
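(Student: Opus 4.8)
The plan is to prove the final Corollary, which asserts the existence of knots in $S^3$ realizing the growth rate values $1$, $0$, $-1/2$, and $1/2$. Only item~(4) is new, so the bulk of the work is the computation $\gr_t(K_{MSY}) = 1/2$, while items~(1), (2), (3) are recalled from the discussion preceding the statement. For~(1) I would point to the construction given in the excerpt: take a minimal genus Heegaard surface $\Sigma$ of genus $\geq 2$ and Hempel distance $\geq 3$, let $K$ be a core of one of the handlebodies, observe $K$ is inadmissible (since $\Sigma$ remains a Heegaard surface of $E(K)$) and hyperbolic (by Geometrization, as the distance is $\geq 3$), and invoke the result of~\cite{kobayashi-rieck-growth-rate} that inadmissible knots have growth rate $1$. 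For~(2) I would cite~\cite{kobayashi-rieck-growth-rate}, where torus knots and $2$-bridge knots are shown to have growth rate $0$ (e.g.\ the figure-eight knot gives a hyperbolic example in $S^3$). For~(3) I would cite the construction of Kobayashi and Saito~\cite{KobayashiSaito}.

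For item~(4), the strategy is to apply Theorem~\ref{thm:main} to the knots $K_{MSY}$ of Morimoto, Sakuma and Yokota. First I would record the input facts (1)--(4) listed in the excerpt about $K_{MSY}$: they are hyperbolic and m-small (so Theorem~\ref{thm:main} applies and gives equality, not just an inequality); $g(E(K_{MSY})) = 2$, hence in the notation of Notation~\ref{notation:bridge indices} we have $g = g(E(K_{MSY})) - g(S^3) = 2 - 0 = 2$; $b_1^*(K_{MSY}) = 2$ (the torus bridge index, bounded below by~$1$ in~\cite{skuma-morimoto-yokata} and above by~$2$ as in~\cite{kobayashi-rieck-growth-rate}); and $b_2^*(K_{MSY}) \geq 4$ (the bridge index, using Kim's theorem~\cite{kim} that tunnel number one $3$-bridge knots have torus bridge index~$1$ to rule out $b_2^* = 3$, together with $b_2^* > b_1^* = 2$). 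Then Theorem~\ref{thm:main} gives
$$\gr_t(K_{MSY}) = \min_{i=1,2}\left\{1 - \frac{i}{b_i^*}\right\} = \min\left\{1 - \frac{1}{b_1^*},\; 1 - \frac{2}{b_2^*}\right\} = \min\left\{1 - \frac{1}{2},\; 1 - \frac{2}{b_2^*}\right\}.$$
Since $b_2^* \geq 4$, the second term satisfies $1 - \frac{2}{b_2^*} \geq 1 - \frac{2}{4} = \frac{1}{2}$, so the minimum equals $\frac{1}{2}$, regardless of the exact value of $b_2^*$ (this is why the refinement of~\cite{BowmanTaylorZupan} showing $b_2^* = 7$ is not needed). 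This establishes $\gr_t(K_{MSY}) = 1/2$, and since $K_{MSY}$ is hyperbolic and lives in $S^3$, item~(4) follows.

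The main obstacle is not in the final assembly, which is a short arithmetic argument once Theorem~\ref{thm:main} and the facts about $K_{MSY}$ are in hand, but rather in correctly citing and attributing the external inputs: the hyperbolicity and m-smallness of $K_{MSY}$ (announced by Morimoto~\cite{morimoto3}), the Heegaard genus computation~\cite{skuma-morimoto-yokata}, and Kim's structural theorem~\cite{kim}. One subtlety worth flagging explicitly in the write-up is that Theorem~\ref{thm:main} only yields the \emph{inequality} $\gr_t(K) \leq \min_i\{1 - i/b_i^*\}$ for general admissible knots; the equality — and the existence of the limit — requires the m-small hypothesis, which is exactly why property~(1) of $K_{MSY}$ is essential. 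I would close by noting, as the excerpt does, that this is the first known example of a knot with growth rate strictly between $0$ and $1$, answering a question from~\cite{kobayashi-rieck-growth-rate}.
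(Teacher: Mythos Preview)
Your proposal is correct and follows essentially the same approach as the paper: items~(1)--(3) are recalled from prior work exactly as in the discussion preceding the corollary, and for~(4) you apply Theorem~\ref{thm:main} to $K_{MSY}$ using the four listed properties, carrying out explicitly the arithmetic the paper summarizes in one line. One minor phrasing point: when you say $b_1^*$ is ``bounded below by~$1$'' in~\cite{skuma-morimoto-yokata}, you mean the strict inequality $b_1^* > 1$ shown there (equivalently $b_1^* \geq 2$), which together with $b_1^* \leq 2$ forces $b_1^* = 2$.
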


\begin{rmkk}
In~\cite{BakerKobayashiRieck} K Baker and the authors use Theorem~\ref{thm:main}
to show that for any $\epsilon > 0$ there exists a hyperbolic knot $K \subset S^3$
with $1 - \epsilon < \gr_t(K) < 1$.  
This implies, in particular, that the spectrum of the growth rate is infinite.
\end{rmkk}

\begin{rmkk}
\label{rmk:RecentWork}
We take this opportunity to mention a few recent results about $b_i^*$
that appeared since we first started writing this paper; 
for precise statements see references.
\begin{enumerate}
\item In~\cite{IchiharaSaito}, given positive integers $g_M <i \leq  g_K$ and $n$,  
K Ichihara and T Saito constructed 
manifolds $M$ and knots $K \subset M$ 
so that $g(M) = g_M$, $g(E(K)) = g_K$, and $b_i^*(K) - b_{i-1}^*(K) \geq 2$
(see~\cite[Corolloar~2]{IchiharaSaito}; the notation there is different from ours);
their arguments can easily be applied to construct knots such that $b_i^*(K) - b_{i-1}^*(K) \geq n$ 
(informally, we may phrase this as an a{\it rbitrarily large gap}).  
\item In~\cite{Zupan} Zupan studies the
bridge indices of iterated torus knots showing, in particular, that there exist  
iterated torus knots realizing arbitrarily large gaps between $b_{i-1}^*$ and $b_i^*$ for 
any $i$ in the range where both indices are defined.
An easy argument shows that iterated torus knots are m-small; every knot $K$
considered by Zupan fulfils  $b_1^*(K) = 1$, and so has $\gr(K) = 0$
by Corollary~\ref{cor:bounds}.
\item In~\cite{BowmanTaylorZupan} Bowman, Taylor, and Zupan calculate the
bridge indices of generic iterated torus knots (see~\cite{BowmanTaylorZupan} 
for definitions).  They give conditions on the parameters that
imply that $b_g^* = p$, where here the knot considered is obtained by twisting the torus knot $T_{p,q}$, $p < q$. 
(We note that for twisted torus knot $g=2$).  Applying this to $K_{MSY}$ we
see that all but finitely many of these knots have $b_2^*=7$, improving on
our estimate $b_2^* \geq 4$.  We remark that in~\cite{BowmanTaylorZupan} 
linear lower bound on $b_1^*$ was also obtained, showing that many twisted torus knots
have a gap between $b_1^*$ and $b_2^*$; since $b_2^*$ can be made
arbitrarily large, this can be seen as a second gap.
\end{enumerate}
\end{rmkk}

\bigskip
\noindent
Before describing the structure and contents of this paper in more detail we introduce some necessary
concepts.  Let $\s$ be a Heegaard surface of a compact 3-manifold $M$, 
and $A$ an essential annulus properly embedded in $M$.
The annulus $A$ is called 
a {\it Haken annulus for $\s$} (Definition~\ref{dfn:haken annulus}) if it intersects $\s$ in a single simple closed curve that is essential in $A$.
For an integer $c \geq 0$, the manifold obtained by drilling $c$ curves simultaneously 
parallel to meridians of $K$ out of $E(K)$ is denoted by $E(K)^{(c)}$ (note that $E(K)^{(0)} = E(K)$).
The $c$ tori $\del E(K)^{(c)} \setminus \del E(K)$ are denoted by $T_1,\dots,T_c$.  There are $c$
annuli properly embedded disjointly in $E(K)^{(c)}$, denoted by $A_1,\dots,A_c$, so that one
component of $\del A_i$ is a meridian on $\del E(K)$ and the other is a longitude
of $T_i$ ($i=1,\dots,c$).  (We note that in general these annuli are not  
uniquely determined up to isotopy.)  Annuli with these properties are called 
{\it a complete system of Hopf annuli} 
 (Definition~\ref{dfn:hopf-annuli}).   
Let $\s$ be a Heegaard surface for $E(K)^{(c)}$.  
The Hopf annuli $A_1,\dots,A_c$ are called 
{\it a complete system of Hopf--Haken
Annuli for $\s$} 
(Definition~\ref{dfn:hopf-haken-annuli}) 
if $\s \cap A_i$ is a single simple closed curve that is essential in
$A_i$ ($i=1,\dots, c$).

Part~2 starts with Section~\ref{sec:haken annuli-definitions}
where we describe basic behavior of
Haken annuli under amalgamation.  
In Section~\ref{sec:tfae} we consider $(g', b)$ decomposition of $K$ 
(that is, $b$-bridge decomposition of $K$ with respect to a genus $g'$ Heegaard surface) 
and relate
it to existence of Hopf Haken Annuli.  Specifically, we prove
that $K$ admits a $(g(E(K))-c,c)$ decomposition if and only if $E(K)^{(c)}$ admits a
complete system of  Hopf Haken Annuli for some Heegaard surface of genus $g(E(K))$
(Theorem~\ref{thm:tfae}).  

In Section~\ref{sec:sft} we prove that given knots $K_1,\dots,K_n$ and
integers $c_1,\dots,c_n \geq 0$ with $\sum_{i=1}^n c_i = n-1$,
$E(K_1\#\cdots\#K_n)$ admits a system of $n-1$ essential tori 
$\mathcal{T}$ 
(called {\it swallow follow tori}) 
so that the components of 
$E(K_1\#\cdots\#K_n)$ cut open along $\mathcal{T}$ are homeomorphic to 
$E(K_1)^{(c_1)},\dots,E(K_n)^{(c_n)}$.  
By amalgamating Heegaard surfaces of
$E(K_1)^{(c_1)},\dots,E(K_n)^{(c_n)}$ along the tori of $\mathcal{T}$ 
we obtain a Heegaard surface for $E(K_1\#\cdots\#K_n)$; 
this implies the following special case of Corollary~\ref{cor:sft}:
$$g(E(K_1\#\cdots\#K_n)) \leq \sum_{i=1}^{n} g(E(K_i)^{(c_i)}) - (n-1)$$

In the final section of Part~2, Section~\ref{sec:upper-bound}, we combine
these facts to prove that for each $i$ we have: 
$$\gr_t(K) \leq 1- i/b_i^*$$ 
Thus we obtain the upper bound stated in Theorem~\ref{thm:main}.

\bigskip
\bigskip
\noindent To some degree, Part~3 complements Part~2.   We begin with
Section~\ref{sec:strongHH} that complements
Sections~\ref{sec:haken annuli-definitions} and \ref{sec:tfae}.  As mentioned
above, in Sections~\ref{sec:haken annuli-definitions} and \ref{sec:tfae} we
prove that $K$ admits a $(g(E(K))-c,c)$ decomposition if and only if $E(K)^{(c)}$
admits a
complete system of  Hopf Haken Annuli for some Heegaard surface of genus $g(E(K))$.
We are now ready to state the Strong Hopf
Haken Annulus Theorem,
which generalise the Hopf Haken Annulus Theorem (Theorem 6.3 of~\cite{kobayashi-rieck-m-small})
is one of the highlights of this work.  The 
proof is given in Section~\ref{sec:strongHH}. 
For the definition of a Heegaard splitting of $(N;F_1,F_2)$
(where $N$ is a manifold and $F_1$, $F_2$ are partitions of
some of the components of $\partial N$), see Section~\ref{sec:background} .

\begin{thm}[Strong Hopf-Haken Annulus Theorem]
\label{thm:strongHH}
For $i=1,\dots,n$, let $M_i$ be a compact connected orinetable 3-manifold and $K_i \subset M_i$ a knot.
Suppose $E(K_i) \not\cong T^{2} \times I$, $E(K_i)$ is irreducible,
and $\del N(K_i)$ is incompressible in $E(K_i)$. 
Let $F_1$, $F_2$ be a partition of some of the components of  
$\del M$, where $M = \#_{i=1}^{n} M_i$.  Let $c \geq 0$ be an integer.  
Then one of the following holds:

\begin{enumerate}
\item  There exist a minimal genus Heegaard surface for
$(E(\#_{i=1}^{n}K_{i})^{(c)}; F_1, F_2)$ admitting a 
complete system of Hopf--Haken annuli.
\item For some $1 \leq i \leq n$, $E(K_i)$ admits an essential meridional
  surface $S$ with $\chi(S) \geq  \genbd[g(E(\#_{i=1}^{n}K_{i})^{(c)}; F_1, F_2)]$. 
\end{enumerate}

\end{thm}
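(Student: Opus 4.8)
The plan is to argue by induction on $n$, using the \sft{}s constructed in Section~\ref{sec:sft} to cut the connected sum $\#_{i=1}^{n}K_i$ down to its summands, with the base case $n=1$ supplied by a relative version of the Hopf--Haken Annulus Theorem (Theorem~6.3 of~\cite{kobayashi-rieck-m-small}); this relative version is obtained by running the argument of~\cite{kobayashi-rieck-m-small} for a \hh{} splitting of $(N;F_1,F_2)$, which I sketch first.

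For the base case, put $N=E(K_1)^{(c)}$, fix a complete system of Hopf annuli $A_1,\dots,A_c$, write $A=\bigcup_i A_i$, and let $\Sigma$ be a \mghs{} for $(N;F_1,F_2)$. I would isotope $\Sigma$ so that $|\Sigma\cap A|$ is minimal. Using that $E(K_1)$ is irreducible, $E(K_1)\not\cong T^{2}\times I$, and $\del N(K_1)$ is incompressible, an innermost disk argument removes the curves of $\Sigma\cap A$ that are inessential in the annulus of $A$ containing them; whenever this argument is obstructed it produces an essential meridional surface in $E(K_1)$, whose Euler characteristic must be recorded so as to land in conclusion~(2). Once every curve of $\Sigma\cap A$ is essential in its annulus, it remains to reduce to one curve per annulus: if some $A_i$ meets $\Sigma$ in two or more curves, an outermost sub-annulus of $A_i$ yields either an isotopy of $\Sigma$ lowering $|\Sigma\cap A|$, which is impossible, or, after maximal compression, the desired essential meridional surface. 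In each case the surface $S$ obtained is assembled from subsurfaces of $\Sigma$ by cutting and pasting along curves of $\Sigma\cap A$ and by capping off meridional boundary, and an Euler characteristic count starting from $\chi(\Sigma)=2-2g(N;F_1,F_2)$ gives $\chi(S)\geq\genbd[g(N;F_1,F_2)]$, which is conclusion~(2).

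For the inductive step, write $\#_{i=1}^{n}K_i=K_1\#K'$ with $K'=\#_{i=2}^{n}K_i$, and choose a \sft{} $T\subset E(\#_{i=1}^{n}K_i)^{(c)}$ so that cutting along $T$ produces $E(K_1)^{(c_1)}\sqcup E(K')^{(c')}$ with $c_1+c'=c+1$; the exteriors $E(K_1)$ and $E(K')$ again satisfy the standing hypotheses. Let $\Sigma$ be a \mghs{} for $(E(\#_{i=1}^{n}K_i)^{(c)};F_1,F_2)$; since $T$ is essential and the $E(K_i)$ are irreducible, $\Sigma$ cannot be made disjoint from $T$. If $\Sigma$ can be isotoped to meet $T$ only in curves essential on $T$, then $T$ splits the \hh{} splitting into \hh{} splittings of the two pieces, with induced boundary partitions, and --- since the pieces are neither solid tori nor $T^{2}\times I$ --- each piece has genus at most $g(E(\#_{i=1}^{n}K_i)^{(c)};F_1,F_2)$. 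Applying the base case to $E(K_1)^{(c_1)}$ and the inductive hypothesis to $E(K')^{(c')}$, either both pieces carry complete systems of Hopf--Haken annuli, which I recombine across $T$ --- via the behaviour of Haken annuli under amalgamation established in Section~\ref{sec:haken annuli-definitions} --- into a complete system of Hopf--Haken annuli on the \hh{} surface obtained by amalgamation, giving conclusion~(1); or some $E(K_i)$ carries an essential meridional surface $S$ with $\chi(S)\geq\genbd[h]$, where $h$ is the genus of its piece, whence $\chi(S)\geq\genbd[g(E(\#_{i=1}^{n}K_i)^{(c)};F_1,F_2)]$, which is conclusion~(2). If instead some curve of $\Sigma\cap T$ is inessential on $T$ and cannot be removed by isotopy, an innermost disk argument (again using essentiality of $T$ and irreducibility of the $E(K_i)$) produces an essential meridional surface in some $E(K_i)$, with the Euler characteristic controlled as above.

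I expect the main obstacle to be the ``good position'' case of the inductive step: one must choose the complete systems of Hopf--Haken annuli on the two pieces so that they are compatible along $T$ and then show that they re-assemble, under amalgamation, into a complete system of Hopf--Haken annuli for a \mghs{} of $E(\#_{i=1}^{n}K_i)^{(c)}$ --- in effect, that amalgamation of \hh{} surfaces carrying complete systems of Hopf--Haken annuli yields a \hh{} surface carrying a complete system of Hopf--Haken annuli, preserving the meridional and longitudinal boundary conditions. This is exactly the analysis Section~\ref{sec:haken annuli-definitions} is designed to provide, now applied in the presence of the extra boundary data. Two further points require care: the surface realizing conclusion~(1) is the amalgamated surface and not the original $\Sigma$, so one must verify it is again of minimal genus; and the uniformity of the Euler characteristic estimate in conclusion~(2) rests on each piece in the amalgamation decomposition having genus at most $g(E(\#_{i=1}^{n}K_i)^{(c)};F_1,F_2)$, which uses the hypotheses ruling out solid tori and $T^{2}\times I$ among the $E(K_i)$.
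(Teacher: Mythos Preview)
Your inductive step has a fundamental gap. You choose a \sft{} $T$ in advance (from Section~\ref{sec:sft}) and then try to cut the minimal genus Heegaard surface $\Sigma$ along it, asserting that ``$T$ splits the \hh{} splitting into \hh{} splittings of the two pieces.'' This is false in general: when $\Sigma$ meets $T$ in curves essential on $T$, the surfaces $\Sigma\cap E(K_1)^{(c_1)}$ and $\Sigma\cap E(K')^{(c')}$ have boundary on $T$ and are not closed \hh{} surfaces of the pieces. There is no mechanism for turning an arbitrary essential torus transverse to $\Sigma$ into a decomposition of the splitting. Consequently you have no control over the genera of splittings on the pieces, and your later claim that the amalgamation of minimal genus splittings on the pieces is again minimal genus for the whole is unjustified: this requires $T$ to be what Section~\ref{sec:minimal-genus-sft} calls \emph{natural}, and Example~\ref{ex:sft2} shows that a \sft{} chosen arbitrarily need not be.

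The paper's approach is essentially the reverse of yours: rather than choosing $T$ and forcing $\Sigma$ to respect it, one lets $\Sigma$ determine the torus via untelescoping. If $\Sigma$ is strongly irreducible one argues directly (Case One), using the decomposition $E(K)^{(c)}=X\cup_T Q^{(c)}$ and the relative \hh{} surfaces of Section~\ref{sec*RelativeSplittings} to \emph{replace} $\Sigma\cap Q^{(c)}$ by a surface admitting Hopf--Haken annuli; if $\Sigma$ is weakly reducible, untelescoping produces an essential surface, and one shows (Case Two and the inductive step) that some component is a vertical torus in the characteristic piece $J$, hence a \sft{} $F'$. Because $F'$ arises from weak reduction, it automatically induces \hh{} splittings on the two sides with $g(\Sigma)=g(\Sigma_1)+g(\Sigma_2)-1$, and now replacing each $\Sigma_i$ by a minimal genus splitting with the same boundary partition and amalgamating yields a surface of genus at most $g(\Sigma)$, hence minimal. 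Note also that the induction must be on $(n,c)$ lexicographically, not on $n$ alone: the torus $F'$ coming from untelescoping may fail to separate the prime factors (Case~1 in the proof), in which case $n$ is unchanged but $c$ drops.
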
  
%

One curious consequence of Theorem~\ref{thm:strongHH} (which is proved in
Section~\ref{sec:strongHH}) is the following, where $b_g^*$ is as in Notation~\ref{notation:bridge indices}:

\begin{cor}
\label{cor:GenusOfXc}
Let $K \subset S^{3}$ be a connected sum of m-small knots.  Then for $c \ge b_{g}^{*}$,
$$g(E(K)^{(c)}) = c$$
\end{cor}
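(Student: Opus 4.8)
The plan is to deduce Corollary~\ref{cor:GenusOfXc} directly from the Strong Hopf--Haken Annulus Theorem together with the results of Part~2. First I would record the trivial lower bound $g(E(K)^{(c)}) \geq c$: the $c$ drilled tori $T_1,\dots,T_c$ are linearly independent in $H_1$, so $b_1(E(K)^{(c)})$ grows with $c$, forcing $g(E(K)^{(c)}) \geq c$ (more precisely, $E(K)^{(c)}$ has $c+1$ boundary tori, and a Heegaard genus argument via $\chi$ of the compression bodies gives $g \geq c$ once $K \subset S^3$; alternatively the drilling construction itself shows this). So the content is the upper bound $g(E(K)^{(c)}) \leq c$ for $c \geq b_g^*$.

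For the upper bound, write $K = K_1 \# \cdots \# K_n$ with each $K_i$ m-small (and nontrivial, since trivial summands may be discarded). The hypotheses of Theorem~\ref{thm:strongHH} are satisfied: each $E(K_i)$ is irreducible with incompressible boundary and is not $T^2 \times I$ (as $K_i \subset S^3$ is nontrivial). Take $F_1 = F_2 = \emptyset$ and apply Theorem~\ref{thm:strongHH} to $E(K)^{(c)}$. The key step is to rule out alternative~(2): if some $E(K_i)$ carried an essential meridional surface $S$ with $\chi(S) \geq \genbd[g(E(K)^{(c)})] = 6 - 2g(E(K)^{(c)})$, then since $g(E(K)^{(c)}) \geq c \geq b_g^* \geq g = g(E(K))$ (using that the $b_i^*$ are increasing and $b_g^* \geq g$), we would get $\chi(S) \geq 6 - 2c$, and one would push this to contradict m-smallness of $K_i$ — m-small means the meridian bounds no essential surface at all, so in fact alternative~(2) is \emph{a priori} impossible for any value of the Euler characteristic bound. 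Hence alternative~(1) holds: some minimal genus Heegaard surface $\Sigma$ for $E(K)^{(c)}$ admits a complete system of Hopf--Haken annuli $A_1,\dots,A_c$. (I should double-check whether the statement as applied needs a genuinely m-small hypothesis at each summand or whether connected-sum of m-small suffices; the cited Theorem~\ref{thm:strongHH} is stated for general knots with the stated tameness hypotheses, and the m-small hypothesis is exactly what kills conclusion~(2).)

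Granted alternative~(1), I would invoke Theorem~\ref{thm:tfae} in the form used in Part~2: $E(K)^{(c)}$ admitting a complete system of Hopf--Haken annuli for a Heegaard surface of genus $g(E(K)^{(c)})$ means that $K$ admits a $(g(E(K)^{(c)}) - c,\, c)$ bridge decomposition — that is, $K$ sits in $c$-bridge position with respect to a Heegaard surface of $S^3$ of genus $g(E(K)^{(c)}) - c$. Since $g(S^3) = 0$, the genus of that Heegaard surface is $\geq 0$, giving $g(E(K)^{(c)}) \geq c$ again — but more usefully, run the implication the other way: since $c \geq b_g^*$, the knot $K$ does admit a $c$-bridge presentation with respect to the genus-$0$ Heegaard surface of $S^3$ (a $b_g^*$-bridge sphere presentation, stabilized $c - b_g^*$ times, gives a $c$-bridge sphere presentation). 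By the ``if'' direction of Theorem~\ref{thm:tfae}, $E(K)^{(c)}$ then admits a complete system of Hopf--Haken annuli for some genus-$(0 + c) = c$ Heegaard surface, whence $g(E(K)^{(c)}) \leq c$. Combined with the lower bound, $g(E(K)^{(c)}) = c$.

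The main obstacle I anticipate is handling the interface between Theorem~\ref{thm:strongHH} and Theorem~\ref{thm:tfae} cleanly: Theorem~\ref{thm:tfae} as quoted in the excerpt is phrased for a single knot $K$ and the genus value $g(E(K))$, whereas here I need it for the drilled manifold with an \emph{a priori} unknown genus $g(E(K)^{(c)})$, and I need the bookkeeping ``$c$-bridge with respect to genus $g'$ $\iff$ Hopf--Haken system for genus $g'+c$'' to hold at that general genus. I expect this is exactly the content of the general form of Theorem~\ref{thm:tfae} (the excerpt states the special case), so the real work is just to cite it at the right level of generality and to confirm that stabilizing a bridge sphere behaves as expected on the annulus side. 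The lower bound $g(E(K)^{(c)}) \ge c$, while intuitively clear from homology, may also warrant a one-line justification via the Mayer--Vietoris / compression-body Euler characteristic count, but that is routine.
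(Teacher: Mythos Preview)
Your proof contains the right ingredients, but you have the two inequalities backward. The upper bound $g(E(K)^{(c)}) \leq c$ is the easy direction and holds for \emph{any} knot in $S^3$, with no m-smallness needed: since $c \geq b_g^*$, the knot $K$ admits a $(0,c)$ decomposition, and Corollary~\ref{cor:upper-bound-for-X(b)} (i.e.\ $(1)\Rightarrow(2)$ of Theorem~\ref{thm:tfae}) immediately gives $g(E(K)^{(c)}) \leq c$. Your own final paragraph says exactly this. The Strong Hopf--Haken Theorem is not needed for this direction.

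The lower bound $g(E(K)^{(c)}) \geq c$ is the content, and your ``trivial'' justification is wrong. First Betti number does \emph{not} bound Heegaard genus from below for manifolds with boundary: $T^2 \times I$ has $b_1 = 2$ and Heegaard genus $1$. The elementary bound coming from $c+1$ torus boundary components is only $g \geq \lceil (c+1)/2 \rceil$, since the boundary tori can be split between the two compression bodies. Whether $g(E(K)^{(c)}) < c$ can occur for non--m-small knots is precisely what the paper leaves open in Question~\ref{que:bounded}.

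The paper's argument for the lower bound is the one you wrote down and then dismissed as redundant: apply Theorem~\ref{thm:strongHH} (using m-smallness of the summands to rule out alternative~(2)) to obtain a minimal genus Heegaard surface $\Sigma$ for $E(K)^{(c)}$ admitting a complete system of Hopf--Haken annuli; then either observe directly that the $c$ tori $T_1,\dots,T_c$ all lie in $\partial_-$ of a single compression body, forcing $g(\Sigma)\geq c$, or equivalently invoke $(2)\Rightarrow(1)$ of Theorem~\ref{thm:tfae} to get a $(g(\Sigma)-c,c)$ decomposition of $K$ and note that $g(\Sigma)-c \geq g(S^3)=0$. Your sentence ``giving $g(E(K)^{(c)}) \geq c$ again'' is not a redundancy---it is the proof.
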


Section~\ref{sec:minimal-genus-sft} complements Section~\ref{sec:sft}.  Recall
that in Section~\ref{sec:sft} we used swallow follow tori to show that
given {\it any} collection
of integers $c_1,\dots,c_n \geq 0$ whose sum is $n-1$ we have that
$g(E(K_1\#\cdots\#K_n)) \leq \sum_{i=1}^{n} g(E(K_i)^{(c_i)}) - (n-1)$.
In Section~\ref{sec:minimal-genus-sft} we prove that if $K_i$ is m-small for each
$i$, then any Heegaard splitting for $E(K_1\#\cdots\#K_n)$ admits an iterated weak reduction 
to $n-1$ swallow follow tori.  
This implies that any minimal genus 
Heegaard splitting admits an iterated weak reduction
to {\it some} $n-1$ swallow follow tori that decompose $E(K_1\#\cdots\# K_n)$ as
$E(K_1)^{(c_1)},\dots,E(K_n)^{(c_n)}$, giving {\it some} integers 
$c_1,\dots,c_n \geq 0$ whose sum is $n-1$.  
The integers $c_1,\dots,c_n$ are very
special (see Example~\ref{ex:sft2}).

In Section~\ref{sec:growth-rate}, which complements Section~\ref{sec:upper-bound}, 
we combine these results to give a
lower bound on the growth rate of the tunnel number of m-small knots.  Given
$K$, we ``expect'' that $g(E(K)^{(c)}) = g(E(K)) + c$; we define the function
$\fff$ that measures to what extent $g(E(K)^{(c)})$ fails to behave ``as expected'':
$$\fff(c) = g(E(K)) + c - g(E(K)^{(c)})$$
For any knot $K$ and any integer $c \geq 0$, we show that $\fff$ fulfills:
$$\fff(0)=0 \mbox{   and  } \fff(c)\le\fff(c+1)\le\fff(c)+1$$

We study $\fff$ for m-small knots, calculating it exactly in terms of the
bridge indices of $K$ (Proposition~\ref{pro:fk}).  
In particular, for m-small knots $\fff$ is bounded.  
In fact, for large enough $c$ Proposition~\ref{pro:fk} implies: 
$$\fff(c) = g(E(K)) - g(M)$$ 
We do not know much about the behavior of $\fff$ in general;
for example, we do not know if there exists a knot for which $\fff$ is unbounded
(see Question~\ref{que:bounded}).

We express the growth rate of tunnel number of $m$-small knots 
in terms of $\fff$ by showing (Corollary~\ref{cor:sft-strong4}) that:
$$\frac{g(E(nK)) - ng(E(K)) + n-1}{n-1} = 1 - \frac{\max\{\sum_{i=1}^n
  \fff(c_i)\}}{n-1}$$ 
where the maximum is taken over all collections of integers $c_1,\dots,c_n \geq 0$ 
whose sum is $n-1$.  
The growth rate is then the limit superior of this
sequence.  
We combine this interpretation of the growth rate with the calculation of
$\fff$ to obtain the exact calculation of the growth rate of m-small knots 
stated in Theorem~\ref{thm:main}.
 
 \bigskip
 
\noindent {\bf Acknowledgements.}  We thank Mark Arnold and Jennifer Schultens
for helpful correspondence.

\section{Preliminaries}
\label{sec:background}
  
By {\it manifold} we mean a smooth 3 dimensional manifold.  All
manifolds considered are assumed to be connected orientable and compact.  
We assume the reader is familiar with the basic terms of 3-manifold topology (see for example \cite{hempel} or \cite{jaco}).
Thus we assume the reader is familiar with terms such as compression, boundary compression,
boundary parallel, and essential surface.

We use the notation $\partial$,  $\mbox{cl}$, and $\mbox{int}$ denote boundary, 
closure, and interior, respectively.  For a submanifold 
$H$ of a manifold $M$, $N(H,M)$ denotes a closed regular neighborhood of $H$ in $M$.  When $M$
is understood from context we often abbreviate $N(H,M)$ to $N(H)$.

By a {\it knot} $K$ in a 3-manifold $M$ we mean a smooth embedding of $S^1$ into $M$, taken
up to ambient isotopy.
The {\it exterior} of $K$, $E(K)$, is $\mbox{cl}(M \setminus N(K))$.  The slope on the torus
$\partial E(K) \setminus \partial M = \partial N(K)$ that bounds a disk in $N(K)$
is called the {\it meridian} of $K$.
A knot $K$ is called {\it m-small} if there is no essential meridional surface
in $E(K)$, that is, there is no essential surface $S \subset E(K)$ with non
empty boundary so that $\del S$ consists of meridians of $K$.

We assume the reader is familiar with the basic terms regarding Heegaard splittings, such as 
handlebody, compression body, meridian disk, etc.  
Recall that a compression body $C$ is a connected 3-manifold obtained from $F \times [0,1]$
(where here $F$ is a possibly empty disjoint union of closed surfaces)
and a (possibly empty) collection of 3-balls by attaching 1-handles to
$F \times \{1\}$ and the boundary of the balls.   
Following standard conventions, we refer to $F \times \{0\}$ as
$\del_- C$ and $\partial C \setminus \del_{-} C$ as $\del_+  C$.
We use the notation $C_1 \cup_\s C_2$ 
for the Heegaard splitting given by the compression bodies $C_1$ and $C_2$.
The basic concepts of reductions of a Heegaard splitting are summarized here:

\begin{dfns}
\label{dfn:reducibility of heegaard splittings}

\begin{enumerate}
    \item  A Heegaard splitting $C_1 \cup_\s C_2$ is called
    \em stabilized \em if there exist meridian disks $D_1 \subset
    C_1$ and $D_2 \subset C_2$ such that $\del D_1$ intersects $\del D_2$
    transversely (as submanifolds of $\s$) in one point.
    Otherwise, the Heegaard splitting is called \em non-stabilized. \em
    \item A Heegaard splitting $C_1 \cup_\s C_2$ is called
    \em reducible \em if there exist meridian disks $D_1 \subset
    C_1$ and $D_2 \subset C_2$ such that $\del D_1 = \del D_2$.
    Otherwise, the Heegaard splitting is called \em irreducible. \em
    \item A Heegaard splitting $C_1 \cup_\s C_2$ is called
    \em weakly reducible \em if there exist meridian disks $D_1 \subset
    C_1$ and $D_2 \subset C_2$ such that $\del D_1 \cap \del D_2 =
    \emptyset$.  Otherwise the splitting is called \em strongly
    irreducible.\em
    \item A Heegaard splitting $C_1 \cup_\s C_2$ is called
    \em trivial \em if $C_1$ or $C_2$ is a trivial compression body,
    that is, a compression body with no 1-handles.
    Otherwise the Heegaard splitting is called \em non-trivial.\em
\end{enumerate}
\end{dfns}

Let $C_1 \cup_\s C_2$ be a weakly reducible Heegaard splitting of a manifold $M$.  Let $\Delta_i \subset C_i$
be a non empty set of disjoint meridian disks so that $\Delta_1 \cap \Delta_2 = \emptyset$.	
By {\it waek reduction} along $\Delta_1 \cup \Delta_2$ we mean the (possibly disconnected) surface obtained by first compressing
$\s$  along $\Delta_1 \cup \Delta_2$, and then removing any component that is contained in $C_1$ or $C_2$.
Casson and Gordon~\cite{casson-gordon} showed that if an irreducible Heegaard splitting is weakly reducible,
then an appropriately chosen weak reduction yields a (possibly disconnected) essential surface, say $F$.

With $F$ as in the previous paragraph, let $M_1,\dots,M_k$ be the components of $M$ cut open along
$F$.  It is well known that $\s$ induces a Heegaard surface on each $M_i$, say 
$\s_i$.  We say that $\s$ is obtained by {\it amalgamating} $\s_1,\dots,\s_k$.
This is a special case of amalgamation; the general definition will be given below
as the converse of iterated weak reduction.
The genus after amalgamation is given in the following lemma;
see Remark~2.7 of~\cite{schultens-FXS1} for the case $m=1$
(we leave the proof of the general case to the reader):

\begin{lem}
\label{lem:genus after amalgamation}
Let $C_1 \cup_{\s} C_2$ be a weakly reducible Heegaard splitting
and suppose that after weak reduction we obtain $F$ (as above).
Suppose that $M$ cut open along $F$ consists of two components, and denote the induced Heegaard splittings by
$C_1^{(1)} \cup_{\s_1} C_2^{(1)}$ and $C_1^{(2)} \cup_{\s_2} C_2^{(2)}$.  
Let $F_1,\dots,F_m$ be the components of $F$. 
Then
$$g(\s) = g(\s_1) + g(\s_2) - \sum_{i=1}^m g(F_i) + (m-1)$$
In particular, if $F$ is connected then 
$g(\s) = g(\s_1) + g(\s_2) -  g(F)$.
\end{lem}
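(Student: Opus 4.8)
The plan is to reduce the statement to the single Euler--characteristic identity $\chi(\s) = \chi(\s_1) + \chi(\s_2) - \chi(F)$, where $\chi(F) = \sum_{i=1}^{m}\chi(F_i)$, and then to prove that identity by making the amalgamation construction explicit.  The reduction is pure bookkeeping: $\s$, $\s_1$, and $\s_2$ are connected closed surfaces, so $\chi(\cdot) = 2 - 2g(\cdot)$ for each, while $\chi(F) = \sum_{i=1}^{m}(2 - 2g(F_i)) = 2m - 2\sum_{i=1}^{m} g(F_i)$; substituting these into the identity and solving for $g(\s)$ yields exactly $g(\s) = g(\s_1) + g(\s_2) - \sum_{i=1}^{m}g(F_i) + (m-1)$, and the ``in particular'' clause is the case $m=1$.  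In particular the term $m-1$ is precisely the Euler--characteristic defect of the $m$-component surface $F$, so there is nothing mysterious about its appearance.

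For the identity I would use the generalization to $m \ge 1$ of Schultens' amalgamation construction (Remark~2.7 of~\cite{schultens-FXS1}), together with the elementary formula $\chi(\del_+ C) = \chi(\del_- C) - 2h + 2b$ for a compression body $C$ built from $\del_- C \times I$ by attaching $b$ $0$-handles and $h$ $1$-handles; in particular, attaching one further $1$-handle to $\del_+ C$ lowers $\chi(\del_+ C)$ by $2$ and leaves $\del_- C$ unchanged.  After relabeling we may assume (at least when $F$ is connected) that $F \subseteq \del_- C_1^{(1)}$ and $F \subseteq \del_- C_2^{(2)}$, and we fix a bicollar $F \times [-1,1]$ with $F \times [-1,0] \subset C_1^{(1)}$ and $F \times [0,1] \subset C_2^{(2)}$.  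The amalgamated splitting $\mathcal C_1 \cup_\s \mathcal C_2$ is then obtained by pushing each $1$-handle of $C_2^{(2)}$ that meets $F$ (as a dual $1$-handle) across $F$ and re-attaching it to $\s_1 = \del_+ C_1^{(1)}$: the resulting compression body $\mathcal C_1$ is $C_2^{(1)}$ with these $1$-handles attached to its positive boundary, $\mathcal C_2$ is $C_1^{(2)}$ with the dual $1$-handles of $C_1^{(1)}$ attached symmetrically, and $\s = \del_+ \mathcal C_1 = \del_+ \mathcal C_2$.

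Granting this, the identity is immediate in the clean case $F = \del_- C_1^{(1)} = \del_- C_2^{(2)}$ (which includes $M$ closed with $F$ connected).  Let $h_2$ be the number of $1$-handles of $C_2^{(2)}$; all of them meet $F$, and $C_2^{(2)}$ may be built without $0$-handles since $\del_- C_2^{(2)} \ne \emptyset$, so the compression-body formula in $M_2$ gives $\chi(\s_2) = \chi(F) - 2h_2$.  On the other hand $\mathcal C_1$ is $C_2^{(1)}$ with $h_2$ one-handles attached to $\del_+ C_2^{(1)} = \s_1$, hence $\chi(\s) = \chi(\del_+\mathcal C_1) = \chi(\del_+ C_2^{(1)}) - 2h_2 = \chi(\s_1) - 2h_2 = \chi(\s_1) + \chi(\s_2) - \chi(F)$, as desired.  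The general statement follows by running the same count over each component of $F$ separately, carrying along the extra negative boundary components when $\del M \ne \emptyset$ and the $1$-handles that do not meet $F$.

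The routine-but-fiddly obstacle is exactly this last bookkeeping in the general statement: when $F$ is disconnected one must track which $1$-handles of each compression body meet which components of $F$, and a priori must split the components of $F$ between the two compression bodies on a given side of the amalgamation; when $\del M \ne \emptyset$ one must carry the surplus negative boundary through the compression-body formula.  Since the Euler--characteristic count is local near $F$ and additive over its components, none of this affects the outcome --- which is why, as the statement says, the general case can be left to the reader once Schultens' $m=1$ case is understood.
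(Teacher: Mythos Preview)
Your proposal is correct. The paper itself does not give a proof of this lemma: it cites Remark~2.7 of Schultens~\cite{schultens-FXS1} for the connected case $m=1$ and explicitly leaves the general case to the reader. Your argument---reducing the genus formula to the Euler-characteristic identity $\chi(\s)=\chi(\s_1)+\chi(\s_2)-\chi(F)$ and then verifying that identity by counting $1$-handles in the amalgamation---is exactly the proof a reader would supply, and your observation that the extra $(m-1)$ is nothing but the Euler-characteristic defect of a disconnected $F$ is precisely the point that makes the passage from $m=1$ to general $m$ routine.
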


It is distinctly possible that not all the Heegaard splittings induced by weak reduction are strongly
irreducible.  When that happens we may weakly reduce some (possibly all) of the induced Heegaard
splitting, and repeat this process.  We refer to this as {\it repeated} or {\it iterated}
weak reduction.  The converse is called amalgamation.
Scharlemann and Thompson~\cite{scharl-abby} proved that any Heegaard splitting
admits a repeated  weak reduction so that the induced Heegaard splittings are all
strongly irreducible; we refer to this as {\it untelescoping}.

Let $N$ be a manifold and $\{F_{1},F_{2}\}$ a partition of some components of $\del N$.
Note that we do {\it not} require every component of $\partial N$ to be in $F_1$ or $F_2$.
We say that $C_1 \cup_\s C_2$ is a Heegaard splitting of $(N;F_1,F_2)$
if $F_1 \subset \del_- C_1$ and $F_2 \subset \del_- C_2$.
We extend the terminology of Heegaard splittings to this context, so, for
example, $g(N;F_1,F_2)$ is the genus of a minimal genus Heegaard splitting
 of $(N;F_1,F_2)$.

\medskip

The following proposition allows us, in some cases, to consider weak reduction instead of 
iterated weak reduction.  The proof is simple and left to the reader.

\begin{pro}
\label{pro:untel to a conn sep surfce implies weak reduction}
Let $F$ be a component of the surface obtained by repeated weak reduction of
$C_1 \cup_{\s_1} C_2$.  If $F$ is separating, then some weak reduction of $C_1 \cup_{\s_1} C_2$
yields exactly $F$.

\end{pro}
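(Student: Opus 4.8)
The plan is to analyze the effect of repeated weak reduction on the component $F$ directly, using the fact that $F$ is separating to short-circuit the iteration. First I would recall what repeated weak reduction produces: starting from $C_1 \cup_{\s_1} C_2$, one performs a weak reduction along some $\Delta_1 \cup \Delta_2$, obtaining an essential surface $F^{(1)}$ together with induced Heegaard splittings on the components of $M$ cut along $F^{(1)}$; one then iterates on those induced splittings. The surface $F$ of the statement is a component of the final surface obtained at the end of this process. The key observation is that, since $F$ is separating in $M$, the portion of $M$ lying on one side of $F$ that gets ``built up'' through the iterated steps can be collapsed: the compression disks used in the later stages of the iteration, on the side of $F$ away from the region where $F$ first appeared, can be grouped together with the first-stage disks to exhibit $F$ as arising from a single weak reduction.

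More concretely, I would argue as follows. Let $F$ appear at stage $k$ of the iteration, as a component of the essential surface split off from some induced Heegaard splitting $C_1' \cup_{\s'} C_2'$ on a piece $M'$ of the decomposition obtained after stage $k-1$. Since $F$ is separating in $M$ and the pieces of the decomposition fit together along the earlier-stage surfaces, $F$ is also separating in $M'$; write $M' = X \cup_F Y$. The weak reduction of $\s'$ along meridian disks $\Delta_1' \subset C_1'$, $\Delta_2' \subset C_2'$ that splits off $F$ uses only disks meeting $F$ ``coherently''; the remaining compressions in $\Delta_1' \cup \Delta_2'$ and all compressions performed at earlier stages on the $M'$ side either lie entirely in $X$, entirely in $Y$, or are irrelevant to $F$. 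Tracing the amalgamation relation backward (amalgamation being the converse of iterated weak reduction, as recalled in the excerpt), one reassembles $\s_1$ from the induced splittings, and the disjoint disk systems on the two sides of $F$ can be amalgamated into a single pair $\Delta_1 \subset C_1$, $\Delta_2 \subset C_2$ with $\Delta_1 \cap \Delta_2 = \emptyset$, whose weak reduction yields exactly $F$ (any other closed components produced lie in $C_1$ or $C_2$ and are discarded by definition of weak reduction). This is essentially a bookkeeping argument about which side of the separating surface $F$ each handle/disk lives on.

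The main obstacle I anticipate is making the ``regrouping of disks'' precise: one must check that the meridian disks coming from different stages of the iteration, after the amalgamation isotopies are performed, remain genuine meridian disks of the two compression bodies $C_1, C_2$ of the original splitting and that the two families stay disjoint. This requires keeping careful track of how amalgamation reconstitutes compression bodies from spine data and verifying that the spine dual to the later-stage disks can be isotoped to miss the spine dual to the first-stage disks precisely because $F$ separates the two. The separability hypothesis is exactly what rules out the disks on the two sides being forced to intersect. Once this disjointness is in hand, the conclusion that a single weak reduction of $C_1 \cup_{\s_1} C_2$ produces $F$ (and only $F$, up to discarded closed pieces) is immediate. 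Since the excerpt explicitly flags this as having a simple proof left to the reader, I would keep the write-up at the level of this structural argument rather than unwinding every amalgamation isotopy in detail.
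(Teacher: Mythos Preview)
Your outline is correct in spirit, and since the paper leaves this proof to the reader there is no ``official'' argument to compare against. However, the bookkeeping difficulty you flag --- verifying that meridian disks from different stages of the iteration remain disjoint meridian disks of the original compression bodies after undoing the amalgamation isotopies --- is a genuine nuisance in your approach, and it can be avoided entirely by arguing from the amalgamation side rather than the disk side.

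Here is the cleaner route, which is presumably what the authors have in mind when they call the proof ``simple.'' The iterated weak reduction exhibits $\s_1$ as an amalgamation of strongly irreducible Heegaard splittings along a collection of thin surfaces, one component of which is $F$. Since $F$ separates $M$, say $M = M_1 \cup_F M_2$, every other thin surface and every thick surface in the generalized splitting lies entirely in $M_1$ or entirely in $M_2$. Now use the (standard) fact that the result of amalgamation does not depend on the order in which the amalgamations are performed: first amalgamate all pieces inside $M_1$ to obtain a Heegaard surface $\s^{(1)}$ for $M_1$, and all pieces inside $M_2$ to obtain $\s^{(2)}$ for $M_2$; the final amalgamation of $\s^{(1)}$ and $\s^{(2)}$ along $F$ then recovers $\s_1$. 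But a single amalgamation along a connected surface is precisely the inverse of a single weak reduction, so $\s_1$ weakly reduces to exactly $F$.

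This reordering argument sidesteps your ``main obstacle'' completely: you never have to track individual disks through isotopies, only invoke the order-independence of amalgamation. Your approach would work once the disk-disjointness is nailed down, but it is doing that verification by hand rather than packaging it into the amalgamation formalism that the paper has already set up.
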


\section{Relative Heegaard Surfaces}
\label{sec*RelativeSplittings}

In this section we study relative Heegaard surfaces.  The results of this section will be used
in Section~\ref{sec:strongHH} and the reader may postpone reading it until that section.  
Let $b \geq 1$ be an integer and $T$ be a torus.  For $1 \leq i \leq 2b$, 
let $A_{i} \subset T$ be an annulus.
We say that $\{A_1,\dots,A_{2b}\}$ gives a {\it decomposition 
of $T$ into annuli} (or simply a {\it decomposition of $T$}) if the following two conditions hold:
\begin{enumerate}
\item $\cup_{i=1}^{2b} A_i = T$, and
\item 
	\begin{enumerate}
	\item If $b >1$, then $A_i \cap A_j = \emptyset$ whenever $i \ne j$ are non consecutive integers (modulo $2b$),
	and $A_i \cap A_{i+1} = \partial A_{i} \cap \partial A_{i+1}$ is a single simple closed curve. 
	\item If $b = 1$, then $A_1 \cap A_2 = \partial A_1  = \partial A_2$.
	\end{enumerate}
\end{enumerate}
We begin by defining a relative Heegaard surface; note that the definition can be made more general
by considering an arbitrary collection of boundary components (below we only consider a single torus)
and a decomposition into arbitrary subsurfaces (below we only consider annuli); however the definition
below suffices for our purposes:

\begin{dfn}[relative Heegaard surface]
\label{def:RalitiveSplitting}
Let $M$ be a compact connected orientable 3-manifold and $T$ a torus component of $\partial M$.
Let $\{A_1,\dots,A_{2b}\}$ be a decomposition of $T$ into annuli.
A compact surface $S \subset M$ is called a \em Heegaard surface for $M$ relative to $\{A_1,\dots,A_{2b}\}$ \em
(or simply a \em relative Heegaard surface, \em when no confusion may arise) if the following conditions hold:
\begin{enumerate}
\item $\partial S = \cup_{i=1}^{2b} \partial A_i$,
\item $M$ cut open along $S$ consists of two components (say $C_1$ and $C_2$), 
\item For $i=1,2$, $C_{i}$ admits a set of compressing disks $\Delta_{i}$ 
with $\partial \Delta_i \subset S$, so that $C_{i}$ compressed along $\Delta_{i}$ consists of:
	\begin{enumerate}
	\item exactly $b$ solid tori, each containing exactly one $A_{i}$ as a longitudinal annulus;
	\item a (possibly empty) collection of collar neighborhoods of components of $\partial M \setminus T$;
	\item a (possibly empty) collection of balls.
	\end{enumerate}
\end{enumerate}
The genus of a minimal genus relative Heegaard surface is called the \em relative genus.\em
\end{dfn}

For an integer $c \geq 1$, let $Q^{(c)}$ be (annulus with $c$ holes) $\times S^{1}$.
(To avoid confusion we remark that $Q^{(c)}$ can be described as (sphere with $c+2$ 
holes) $\times S^{1}$, but in the context of this paper an annulus is more natural.) 
Note that $Q^{(c)}$ admits a unique Seifert fiberation.
Our goal is to calculate the genus of $Q^{(c)}$ relative to a given decomposition of a 
component of $\partial Q^{(c)}$ into annuli.  We say that
slopes $\beta$ and $\gamma$ of a torus are 
\em complimentary \em if they are represented by simple closed curves that intersect each other 
transversely once.  

\begin{pro}
\label{prop:RelativeGenus}
Let $\{A_1,\dots,A_{2b}\}$ be a decomposition of a component of $\partial Q^{(c)}$ (say $T$) into annuli, and denote
that the slope defined by these annuli by $\beta$.  Denote the slope defined by the Seifert fibers on $T$
by $\gamma$.  Then we have:
\begin{itemize}
\item When $\beta$ and $\gamma$ are complimentary slopes,  the genus of $Q^{(c)}$ relative to $\{A_1,\dots,A_{2b}\}$ is $c$.
\item When $\beta$ and $\gamma$ are not complimentary slopes,  the genus of $Q^{(c)}$ relative to $\{A_1,\dots,A_{2b}\}$ is $c+1$.
\end{itemize}
\end{pro}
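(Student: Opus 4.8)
The plan is to analyze the relative Heegaard surface by ``capping off'' the decomposition of $T$ and relating the relative genus to an ordinary Heegaard genus of a closely related manifold. First I would dispose of the lower bound. Suppose $S$ is a relative Heegaard surface for $Q^{(c)}$ with respect to $\{A_1,\dots,A_{2b}\}$. The compression-body structure of $C_1$ and $C_2$ from Definition~\ref{def:RalitiveSplitting} shows that each $C_i$ compressed along $\Delta_i$ is a disjoint union of $b$ solid tori (with longitudinal annuli the $A_i$'s), collars of the components of $\partial Q^{(c)} \setminus T$, and balls. Re-expanding, this means we can recover $Q^{(c)}$ from a standard piece by attaching $1$-handles, so $\chi(S)$ is controlled: a count of handles gives $\chi(S) = \chi(T^2) - (\text{number of } 1\text{-handles}) + \cdots$, and since $Q^{(c)}$ has $c+1$ boundary tori (one of which is cut into $2b$ annuli), a Mayer--Vietoris / Euler-characteristic bookkeeping forces $g(S) \geq c$. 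Concretely, the $c$ extra boundary tori of $Q^{(c)}$ each must be collared into $C_1$ or $C_2$, and connecting all the pieces into a connected $Q^{(c)}$ requires at least $c$ ``extra'' $1$-handles beyond what a genus-$0$ picture would allow; this is where the floor of $c$ comes from. The main obstacle here is making the Euler-characteristic bookkeeping precise while keeping track of the annular boundary and the collar pieces simultaneously.

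Next I would treat the dichotomy on $\beta$ versus $\gamma$. The key idea is this: let $\hat{Q}$ be the manifold obtained by Dehn filling $T$ along the slope $\beta$ (the slope carried by the annuli $A_i$). A relative Heegaard surface for $Q^{(c)}$ with respect to $\{A_1,\dots,A_{2b}\}$ caps off, via the $b$ solid tori on each side each containing an $A_i$ longitudinally, to an ordinary Heegaard surface for $\hat{Q}$ after the filling; conversely an ordinary Heegaard surface of $\hat{Q}$ that is ``nicely positioned'' with respect to the filling solid torus can be punctured to give a relative one. When $\beta$ and $\gamma$ are complementary, filling $Q^{(c)} = (\text{annulus with } c \text{ holes}) \times S^1$ along a slope meeting the fiber once yields $(\text{annulus with } c \text{ holes}) \times S^1$ filled to $(\text{disk with } c \text{ holes}) \times S^1$ --- wait, more carefully, it yields a manifold Heegaard-reducible to something of genus $c$; I would instead argue directly that $Q^{(c)}$ admits an explicit relative Heegaard surface of genus $c$ in the complementary case by building it from the product structure (take vertical annuli and horizontal pieces realizing the $2b$ annuli as longitudes), matching the lower bound. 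In the non-complementary case, the filling slope meets the fiber more than once (or zero times), so the ``vertical'' construction picks up one extra handle — the Seifert fibration obstructs realizing $\beta$ efficiently — giving the upper bound $c+1$, and a refined version of the lower-bound argument (tracking that the fiber slope $\gamma$ cannot be made to bound a meridian disk configuration compatible with $\beta$) shows $c$ is not achievable.

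For the explicit constructions I would work inside the Seifert-fibered picture of $Q^{(c)} = P \times S^1$ where $P$ is an annulus with $c$ holes (equivalently a sphere with $c+2$ holes). In the complementary case, choose an arc system on $P$ cutting it into a disk; the corresponding vertical annuli, together with appropriate horizontal surfaces near $T$, assemble into a relative Heegaard surface; computing its genus from the combinatorics of $P$ (which has Euler characteristic $-c$) gives exactly $c$. In the non-complementary case the product annuli no longer have boundary of slope $\beta$ on $T$, so one must add a single tube to correct the slope, raising the genus by exactly $1$; I would verify this tube is genuinely necessary by the homological fact that the class of $\beta$ on $T$ is not carried by the vertical annuli unless $\beta = \gamma$, up to the relevant intersection number. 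The hardest step, I expect, is the sharp lower bound in the non-complementary case — ruling out genus $c$ — since one must show that every relative Heegaard surface, not just the symmetric constructions, is forced to spend an extra handle; I anticipate this follows from a Scharlemann--Thompson style untelescoping of a hypothetical genus-$c$ relative splitting combined with the classification of Heegaard splittings of Seifert-fibered spaces over planar base (the relevant results of Schultens), forcing any efficient splitting to be vertical and hence to carry the fiber slope, contradicting non-complementarity.
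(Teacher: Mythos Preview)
Your Dehn-filling idea is correct and, if carried through, gives a genuinely different argument from the paper's. You should commit to it rather than retreating to the vaguer untelescoping sketch at the end. Concretely: filling $T$ along $\beta$ and capping the $2b$ boundary circles of a relative Heegaard surface $S$ with meridian disks of the filling solid torus produces an honest Heegaard surface $\hat S$ for $\hat Q$ with $g(\hat S)=g(S)$ (each of the $b$ solid tori in the compressed $C_i$ has its longitudinal annulus $A_j$ capped by a $2$-handle, hence becomes a ball). So the relative genus is bounded below by $g(\hat Q)$. In the complementary case $\hat Q \cong D(c)=(\text{disk with }c\text{ holes})\times S^1$, whose genus is $c$ by Schultens. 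In the non-complementary case an elementary computation gives $\operatorname{rank} H_1(\hat Q)=c+1$ (abelianize $\pi_1(Q^{(c)})\cong F_{c+1}\times\mathbb Z$ and kill the class of $\beta$; since $\beta$ is primitive the quotient is $\mathbb Z^{c+1}$), so $g(\hat Q)\ge c+1$. Together with the explicit constructions this finishes the proof.

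The paper takes a different route. Instead of filling $T$, it tubes $\partial\Sigma$ along the even-indexed annuli $A_{2i}$ and then drills $b$ curves parallel to their cores; this produces a Heegaard surface of genus $g(\Sigma)+b$ for the graph manifold $Q^{(b,c)}_{\beta,\gamma}=Q^{(b)}\cup_T Q^{(c)}$. The lower bound then reduces to the separately proved Proposition~\ref{pro:GenusOfQbc}, which computes $g\big(Q^{(b,c)}_{\beta,\gamma}\big)$ via Schultens's structure theorem for Heegaard splittings of graph manifolds. Your filling argument avoids that machinery entirely, trading it for a one-line rank computation; the paper's approach, on the other hand, yields Proposition~\ref{pro:GenusOfQbc} as an independent result of interest. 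Your proposed Scharlemann--Thompson/untelescoping argument for the non-complementary lower bound would require setting up weak reduction for \emph{relative} splittings, which is real work and unnecessary given that your filling idea already delivers the sharp bound.
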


We immediately obtain:

\begin{cor}
\label{cor:RelHSforQc}
The surfaces in Figure~\ref{fig:RelHS} are minimal genus Heegaard splitting for $Q^{(c)}$
relative to $\{A_{1},\dots,A_{2b}\}$; the left figure is complimentary slopes and the right figure is
non-complimentary slopes.
\end{cor}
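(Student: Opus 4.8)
The plan is to prove Corollary~\ref{cor:RelHSforQc} by exhibiting the surfaces of Figure~\ref{fig:RelHS} as explicit relative Heegaard surfaces for $Q^{(c)}$ (which verifies the ``$\leq$'' part of each case) and then invoking Proposition~\ref{prop:RelativeGenus} to see that these genera are optimal. First I would fix a description of $Q^{(c)}$ as (annulus with $c$ holes) $\times S^1$ and identify the distinguished boundary torus $T$ on which the decomposition $\{A_1,\dots,A_{2b}\}$ lives, together with the slope $\beta$ these annuli define and the fiber slope $\gamma$ on $T$. In the complementary case I would describe the surface in the left figure: it should be built so that cutting $Q^{(c)}$ along it produces, on each side, exactly $b$ solid tori each carrying one $A_i$ longitudinally, plus collars of the remaining boundary tori and possibly some balls — i.e.\ exactly the structure required by Definition~\ref{def:RalitiveSplitting}(3). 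One then reads off its genus and checks it equals $c$. In the non-complementary case the analogous surface in the right figure has genus $c+1$; the extra handle is exactly what is needed to reconcile the annulus slope $\beta$ with the fiber slope $\gamma$ when they are not complementary.

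The key steps, in order, are: (i) recall/set up the Seifert structure on $Q^{(c)}$ and the two slopes $\beta,\gamma$ on $T$; (ii) in each of the two cases, write down the candidate surface $S$ pictured in Figure~\ref{fig:RelHS} and verify conditions (1) and (2) of Definition~\ref{def:RalitiveSplitting} (that $\partial S=\cup_i\partial A_i$ and that $Q^{(c)}\setminus S$ has two components $C_1,C_2$); (iii) exhibit the compressing disk systems $\Delta_1,\Delta_2$ and check that $C_i$ compressed along $\Delta_i$ decomposes into $b$ solid tori with the $A_i$ as longitudinal annuli, together with collars of $\partial Q^{(c)}\setminus T$ and balls, thereby confirming $S$ is a relative Heegaard surface; (iv) compute $g(S)$ in each case, obtaining $c$ and $c+1$ respectively; (v) quote Proposition~\ref{prop:RelativeGenus} to conclude these surfaces have minimal genus, so they are minimal genus relative Heegaard surfaces as claimed.

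Most of this is bookkeeping once the figures are in hand: steps (i), (ii), (iv), and (v) are essentially immediate, with (v) doing the real lifting but already proved. The main obstacle is step (iii): one must argue carefully that the pictured compressing disks are genuinely meridian disks of the two sides and that compressing along them yields precisely the prescribed pieces, with each $A_i$ appearing as a \emph{longitudinal} (not meridional) annulus in its solid torus. This is where the distinction between the two cases really bites — in the complementary case the fiber direction and the $A_i$ direction together frame each solid torus naturally, whereas in the non-complementary case one needs the additional handle (raising the genus to $c+1$) to route the compressing disks so that the $A_i$ still sit longitudinally. I would present this by describing the disks explicitly on the figure and checking the resulting gluing combinatorially, rather than via any delicate general position argument; since Proposition~\ref{prop:RelativeGenus} already establishes the genus exactly, we only need to confirm the pictures realize that genus, so the verification is finite and direct.
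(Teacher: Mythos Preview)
Your proposal is correct and matches the paper's approach: the paper treats this corollary as immediate from Proposition~\ref{prop:RelativeGenus}, since the proof of that proposition already begins by observing that the surfaces in Figure~\ref{fig:RelHS} are relative Heegaard surfaces realizing the stated genera, and then establishes minimality. Your steps (i)--(iv) spell out that first observation in detail and your step (v) is exactly the paper's appeal to Proposition~\ref{prop:RelativeGenus}; the only difference is that you make explicit the verification of Definition~\ref{def:RalitiveSplitting}(3), which the paper leaves to the figure.
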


\begin{figure}[h!]
\includegraphics[height=1.8in]{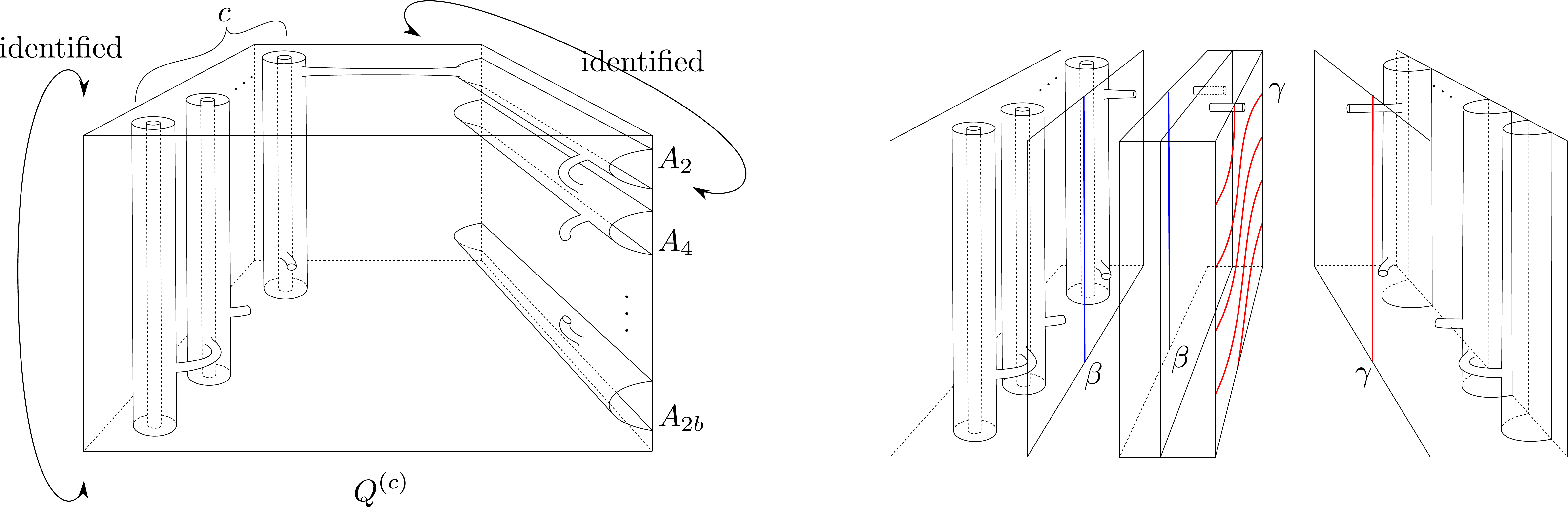}
\caption{Relative Heegaard surfaces}
\label{fig:RelHS}
\end{figure}

We postpone the proof of Proposition~\ref{prop:RelativeGenus} to the end of this section, as
it will be an application of the next proposition which is of independent interest.
We fix the following notation: glue $Q^{(b)}$ to $Q^{(c)}$ along a single boundary component and denote the 
slope of the Seifert fiber of $Q^{(b)}$ on the torus  $Q^{(b)} \cap Q^{(c)}$ by $\beta$
and the slope of the Seifert fiber of $Q^{(c)}$ by $\gamma$.
The manifold obtained is denoted \qbc.

\begin{pro}
\label{pro:GenusOfQbc}
The genus of \qbc\ fulfils:
\begin{itemize}
\item If $\beta$ and $\gamma$ are complimentary slopes, then $g\big(\qbc\big) = b+c$.
\item If $\beta$ and $\gamma$ are not complimentary slopes, then $g\big(\qbc\big) = b+c+1$.
\end{itemize}
\end{pro}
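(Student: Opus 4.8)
The plan is to prove matching upper and lower bounds. For the upper bound, I would construct explicit Heegaard surfaces. Think of $Q^{(b)}$ as (annulus with $b$ holes) $\times S^1$ and similarly for $Q^{(c)}$. There is a standard ``vertical'' Heegaard surface for (planar surface with $k$ boundary components)$\times S^1$: take a spine consisting of a wedge of circles in the planar base times a point, together with appropriate vertical arcs, producing a compression body structure; the genus of $Q^{(k)} = P_{k} \times S^1$ relative to its boundary behaves like $k$ (this is essentially Proposition~\ref{prop:RelativeGenus}, or can be extracted from Schultens' computation of the Heegaard genus of (surface)$\times S^1$). To build a splitting of \qbc, I glue a relative Heegaard surface of $Q^{(b)}$ to one of $Q^{(c)}$ along the gluing torus $T = Q^{(b)} \cap Q^{(c)}$, arranging the two surfaces to meet $T$ in a controlled way. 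When $\beta$ and $\gamma$ are complementary, the two fiber slopes cross once, so a decomposition of $T$ into $2\cdot 1 = 2$ annuli aligned with $\beta$ is simultaneously compatible (as essential annuli) with the product structure on the $Q^{(c)}$ side; amalgamating via Lemma~\ref{lem:genus after amalgamation} over the single torus $T$ gives $g = g_1 + g_2 - g(T) = (b+1) + (c+1) - 1 = b+c+1$ — but with the complementary alignment one can do better by one, merging the two ``vertical'' pieces across $T$, yielding $b+c$. When the slopes are not complementary this saving is unavailable, and the construction gives $b+c+1$. I would present these two constructions carefully with a figure, as in Corollary~\ref{cor:RelHSforQc}.

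The real content is the lower bound: $g(\qbc) \geq b+c$ always, and $\geq b+c+1$ in the non-complementary case. For this I would apply the Casson--Gordon machinery: \qbc\ is a graph manifold with the torus $T$ as its canonical (JSJ) torus, and $T$ is incompressible on both sides (since neither $\beta$ nor $\gamma$ bounds in its respective piece). Take a minimal-genus Heegaard splitting $\Sigma$ of \qbc. If it is weakly reducible, untelescope it (Scharlemann--Thompson): the resulting essential surface, after discarding inessential components, must contain parallel copies of $T$ (by incompressibility of $T$ and the fact that $Q^{(b)}, Q^{(c)}$ are small Seifert pieces with no other closed essential surfaces). Then $\Sigma$ is an amalgamation along copies of $T$ of Heegaard surfaces of the pieces — which are themselves relative Heegaard surfaces of $Q^{(b)}$ and $Q^{(c)}$ with respect to boundary-slope decompositions — and Lemma~\ref{lem:genus after amalgamation} combined with the relative genus bounds (the forward direction of Proposition~\ref{prop:RelativeGenus}, or rather its proof, which we may invoke since Proposition~\ref{prop:RelativeGenus} is stated to follow \emph{from} this one — so I must be careful to prove the relative lower bounds independently here, by the same Casson--Gordon argument applied to $Q^{(k)}$, or set up an induction) gives the bound. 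If instead $\Sigma$ is strongly irreducible, I would isotope it to intersect $T$ essentially (Scharlemann's rules / the standard position of a strongly irreducible Heegaard surface with respect to an incompressible surface): $\Sigma \cap T$ is a nonempty collection of essential curves on $T$, splitting $\Sigma$ into pieces in $Q^{(b)}$ and $Q^{(c)}$ that are relative Heegaard surfaces (or compression bodies amenable to the same count), and again Euler characteristic additivity across $T$ yields $g(\Sigma) \geq b + c$, with the extra $+1$ when the slopes are not complementary coming from the fact that $\Sigma \cap T$ cannot then be made to realize the complementary slope on both sides simultaneously.

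The main obstacle I anticipate is the non-complementary case of the lower bound: showing that one genuinely \emph{cannot} achieve $b+c$. The point is a slope-matching obstruction — in an amalgamation or a strongly-irreducible intersection pattern, the curves $\Sigma \cap T$ carry a single slope, and for the genus count to drop to $b+c$ this slope would have to be complementary to the fiber slope on \emph{one} of the two sides and equal to $\beta = $ (fiber slope of $Q^{(b)}$) as seen from that side — forcing $\beta$ and $\gamma$ complementary, contradiction. Making this rigorous requires a clean statement of how the relative genus of $Q^{(k)}$ depends on whether the boundary decomposition slope is complementary to the fiber slope, which is exactly the statement of Proposition~\ref{prop:RelativeGenus}; to avoid circularity I would either (a) prove Proposition~\ref{prop:GenusOfQbc} and Proposition~\ref{prop:RelativeGenus} by a simultaneous induction on $b$ (with $b=1$ or a degenerate base case handled directly), or (b) prove the needed relative-genus lower bound for $Q^{(k)}$ first as a lemma and derive both propositions from it. I expect (b) to be the cleanest route, with the inductive/amalgamation bookkeeping in (a) as a fallback. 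A secondary technical point is justifying that an untelescoping of a splitting of this graph manifold produces \emph{only} copies of $T$ (no exotic essential surfaces), which follows from the classification of essential surfaces in small Seifert fibered pieces but should be stated explicitly.
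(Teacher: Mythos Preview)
Your overall architecture---explicit upper bound plus Casson--Gordon/amalgamation lower bound, with separate handling of the strongly irreducible case---matches the paper's. But there is a real gap in the weakly reducible branch, and the strongly irreducible branch is handled differently.

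\textbf{The gap.} You assert that the essential surface coming from untelescoping ``must contain parallel copies of $T$'' because $Q^{(b)}$ and $Q^{(c)}$ are ``small Seifert pieces with no other closed essential surfaces.'' This is false: $Q^{(b)}$ is (planar surface with $b+2$ boundary components)$\times S^1$, which for $b\geq 2$ contains many closed essential vertical tori \emph{not} isotopic to $T$---any essential simple closed curve in the base separating the holes gives one. So the surface $F$ produced by weak reduction may well sit strictly inside $Q^{(b)}$ and be non-parallel to $T$. The paper deals with this by inducting on $b+c$: such an $F$ splits $Q^{(b)}$ as $Q^{(b')}$ (for some $b'<b$) glued to a smaller (disk with holes)$\times S^1$, and one side of $F$ is a copy of $Q^{(b',c)}_{\beta,\gamma}$ to which the inductive hypothesis applies, the other side having genus known by Schultens. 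Your proposal of a simultaneous induction (option~(a)) could be reshaped to do this, but as written you have misidentified what the induction is for: it is not to avoid circularity with Proposition~\ref{prop:RelativeGenus}, it is to handle these extra vertical tori. The paper avoids the circularity worry entirely by never invoking Proposition~\ref{prop:RelativeGenus} here---it uses only Schultens' genus formula $g(Q^{(k)})=k+1$ and the amalgamation formula.

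\textbf{The strongly irreducible case.} Here the paper takes a different route from yours. Rather than isotoping $\Sigma$ to meet $T$ essentially and doing a direct Euler-characteristic count, the paper invokes Schultens' structure theorem for strongly irreducible Heegaard splittings of graph manifolds (\cite{schultens-graph}). That theorem forces $\Sigma\cap M_e$ (the edge-manifold piece) to be, in the only viable case, a single component obtained by tubing together two boundary-parallel annuli whose slopes are complementary; this immediately yields $g(\Sigma)=2$, $b=c=1$, and $\beta,\gamma$ complementary. So strong irreducibility occurs only in that one minimal configuration, and everything else is weakly reducible. Your intersection-analysis approach might be made to work, but extracting the ``slopes must be complementary'' conclusion directly from $\Sigma\cap T$ is delicate---it is precisely what Schultens' theorem packages, and the paper wisely cites it rather than redoing that analysis.
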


We immediately obtain:

\begin{cor}
\label{cor:HSforQbc}
The surfaces in Figure~\ref{fig:HSforQbc} are minimal genus Heegaard splitting for $\qbc$.
\end{cor}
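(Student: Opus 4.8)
The plan is to verify directly that each surface drawn in Figure~\ref{fig:HSforQbc} is a Heegaard surface of \qbc\ and that its genus equals the value computed in Proposition~\ref{pro:GenusOfQbc}; once both of these are checked, minimality is immediate from that proposition. In fact the construction used to establish the upper bound in the proof of Proposition~\ref{pro:GenusOfQbc} produces exactly these surfaces, so at bottom the corollary is a matter of recording what that construction yields.

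Concretely, I would first describe the surface $S$ in each half of the figure as an amalgamation, across the gluing torus $T = Q^{(b)} \cap Q^{(c)}$, of a relative Heegaard surface of $Q^{(b)}$ and a relative Heegaard surface of $Q^{(c)}$ of the type produced by Corollary~\ref{cor:RelHSforQc}, taken with respect to one common decomposition $\{A_1,\dots,A_{2p}\}$ of $T$ into annuli, the two pieces being glued along $\cup_i \partial A_i$ (so that $S$ is closed). I would then check that $S$ is a Heegaard surface: cutting \qbc\ open along $S$ gives two pieces $C_1, C_2$, and on each of them the union of the compressing-disk systems of the two relative splittings compresses $C_j$ into (a) solid tori --- one solid torus on the $Q^{(b)}$ side containing some $A_i$ as a longitudinal annulus, glued along $A_i$ to the solid torus on the $Q^{(c)}$ side containing $A_i$ longitudinally, the union being again a solid torus (as in Definition~\ref{def:RalitiveSplitting}) --- together with (b) collars of the remaining torus boundary components and (c) balls; hence $C_1$ and $C_2$ are compression bodies whose $\partial_-$ parts together account for all of $\partial(\qbc)$. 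Finally I would compute $g(S)$ by an Euler characteristic count: since $S$ is a union of two relative Heegaard surfaces glued along the circles $\cup_i \partial A_i$, one has $\chi(S) = \chi(S_b) + \chi(S_c)$, and substituting the relative genera from Corollary~\ref{cor:RelHSforQc} and the boundary-circle counts yields $g(S)$; possibly after cancelling an evident stabilization (which I expect to be needed precisely in the complementary-slopes case) this equals $b+c$ (resp. $b+c+1$), matching Proposition~\ref{pro:GenusOfQbc} and hence showing $S$ is of minimal genus.

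The step I expect to be most delicate is the final bookkeeping: choosing the annulus decomposition of $T$ and tracking which of the two relative surfaces is the ``complementary'' one and which is not (in the sense of Proposition~\ref{prop:RelativeGenus}), together with spotting the single available destabilization in the complementary case, so that the Euler characteristic count comes out to exactly $b+c$ (resp.\ $b+c+1$) and not one larger. No new topological input beyond Proposition~\ref{pro:GenusOfQbc} and Corollary~\ref{cor:RelHSforQc} is required; the content is entirely in reading the figure correctly and reconciling the genus count with the formula.
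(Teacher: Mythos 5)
Your first paragraph captures exactly the paper's (implicit) argument: the corollary is immediate once one knows that the surfaces in Figure~\ref{fig:HSforQbc} are Heegaard surfaces of genus $b+c$ (complementary case) and $b+c+1$ (non-complementary case) --- this is precisely Remark~\ref{rmk:graph}, recorded inside the proof of Proposition~\ref{pro:GenusOfQbc} as the upper bound --- after which minimality follows from Proposition~\ref{pro:GenusOfQbc}. The paper offers nothing beyond this, treating the genus count as visible from the figure.

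The concrete verification you then propose, however, cannot work as stated. If $S=S_b\cup S_c$ is obtained by gluing a relative Heegaard surface of $Q^{(b)}$ to one of $Q^{(c)}$ along the $2p$ circles $\cup_i\partial A_i$ of a common decomposition of $T$ into $2p$ annuli, then $\chi(S)=\chi(S_b)+\chi(S_c)$ yields $g(S)=g(S_b)+g(S_c)+2p-1$, and since $g(S_b)\geq b$, $g(S_c)\geq c$ and $p\geq 1$, one always gets $g(S)\geq b+c+1$. So this construction can never produce the genus-$(b+c)$ surface of the complementary case; the left-hand surface of Figure~\ref{fig:HSforQbc} is therefore not of the form you describe, and the ``evident stabilization'' you hope to cancel is neither evident nor exhibited (and even after finding an actual destabilization you would still have to identify the destabilized surface with the one in the figure). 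The non-complementary case has a separate defect: to land on $b+c+1$ you need the slope $\beta'$ of the annulus decomposition to be complementary to both fibre slopes $\beta$ and $\gamma$, and such a $\beta'$ need not exist (e.g.\ $\beta=(1,0)$, $\gamma=(2,5)$ in $H_1(T)$), in which case your count gives $b+c+2$. The actual surfaces realize the genera differently: in the complementary case the saving across $T$ comes from a single connected piece in a collar of $T$ --- the tubed pair of boundary-parallel annuli with complementary slopes of Figure~\ref{fig:HSinMe} and Claim~\ref{clm:SisSIinQbc} --- capped off by vertical annuli in the two Seifert pieces, while in the non-complementary case the figure's surface is the amalgamation along $T$ of the vertical genus-$(b+1)$ and genus-$(c+1)$ Heegaard surfaces of $Q^{(b)}$ and $Q^{(c)}$, with genus given by Lemma~\ref{lem:genus after amalgamation}.
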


\begin{figure}[h!]
\includegraphics[height=1.6in]{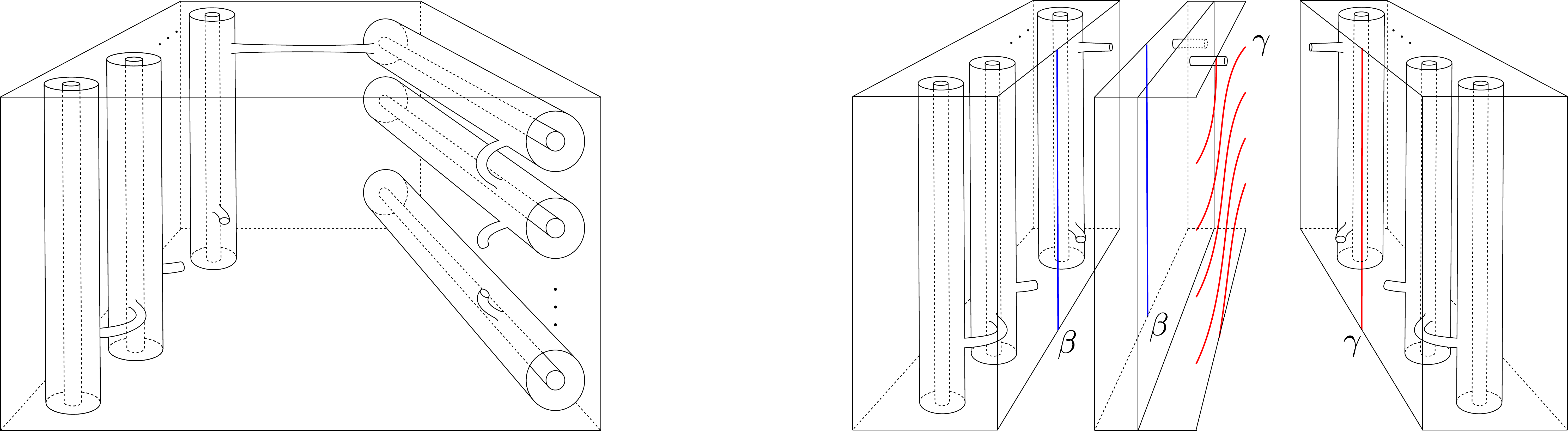}
\caption{Heegaard surfaces for \qbc}
\label{fig:HSforQbc}
\end{figure}

A surface in a Seifert fibered space is called {\it vertical} if it is everywhere tangent
to the fibers and {\it horizontal} if it is everywhere transverse to the fibers.   It is well
known that given an essential surface in a Seifert fibered space we may assume it is
vertical or horizontal;  see for example,~\cite{jaco}.

\begin{proof}[Proof of Proposition~\ref{pro:GenusOfQbc}]
The surfaces in Figure~\ref{fig:HSforQbc} are Heegaard surfaces
for \qbc, showing the following, which we record here 
for future reference:

\begin{rmkk}
\label{rmk:graph}
When $\beta$ and $\gamma$ are complimentary, $g\big(\qbc\big) \leq b+c$. 
When $\beta$ and $\gamma$ are not complimentary, $g\big(\qbc\big) \leq b+c+1$.
\end{rmkk}

Hence we only need to show that
when $\beta$ and $\gamma$ are complimentary, $g\big(\qbc\big) \geq b+c$ and 
when $\beta$ and $\gamma$ are not complimentary, $g\big(\qbc\big) \geq b+c+1$.

If $\beta = \gamma$ then \qbc\ is a $b+c$ times punctured annulus cross $S^1$ and the result
was proved by Schultens in~\cite{schultens-FXS1}.  For the remainder of the proof we assume 
that $\beta \ne \gamma$.
Then \qbc\ is a graph manifold whose underlying graph consists of two vertices connected by a single edge.
We apply Theorem~1.1 of Schultens~\cite{schultens-graph} 
and refer the reader to that paper for notation and details.
To conform to its noation, following~\cite{schultens-graph}, we decompose \qbc\ 
along two parallel copies of $Q^{(b)} \cap Q^{(c)}$
as $\qbc = Q_{b} \cup M_{e} \cup Q_{c}$.
$Q_{b}$ and $Q_{c}$ are called the {\it vertex manifolds} and $M_{e}$ is the {\it edge manifold}.
Note that $Q_{b} \cong Q^{(b)}$, $M_{e} \cong T^{2} \times [0,1]$, and  $Q_{c} \cong Q^{(c)}$.


Let $S$ be a minimal genus Heegaard splitting for \qbc.  In the following claim we analyze 
completely what happens when $g(S) = 2$ or when $S$ is strongly irreducible:
	
\begin{clm}
\label{clm:SisSIinQbc}  The following three conditions are equivalent:
\begin{enumerate}
\item $S$ is strongly irreducible. 
\item The following conditions hold:
\begin{itemize}
\item $\beta$ and $\gamma$ are complimentary.
\item $g(S) = 2$.
\item $b=c=1$.
\end{itemize}
\item $g(S) = 2$.
\end{enumerate}
\end{clm}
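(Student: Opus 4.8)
The plan is to prove the chain of implications $(1) \Rightarrow (3) \Rightarrow (2) \Rightarrow (1)$, since $(3) \Rightarrow (2)$ is the substantial part and $(2) \Rightarrow (1)$ is a direct appeal to Remark~\ref{rmk:graph} together with known facts about genus-$2$ splittings of small graph manifolds.

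First I would handle $(1) \Rightarrow (3)$. Assume $S$ is strongly irreducible. Since $\qbc = Q_b \cup M_e \cup Q_c$ is a graph manifold with incompressible torus $Q^{(b)} \cap Q^{(c)}$ (it is essential because $\beta \ne \gamma$, so neither side admits a compression), I would isotope $S$ to meet this torus efficiently and apply the standard Schultens-style analysis (Theorem~1.1 of~\cite{schultens-graph}): a strongly irreducible Heegaard surface can be isotoped so that on each vertex manifold it restricts to either a vertical or horizontal surface, or to a surface obtained from such by tubing in a controlled way. In $Q^{(b)}$ and $Q^{(c)}$ the only possibilities are essentially pseudo-horizontal/pseudo-vertical pieces; a genus count (using that $\chi(Q^{(k)}) = -k$ and that a horizontal surface in $Q^{(k)}$ is a $k+2$-punctured sphere covering the base, while vertical surfaces are unions of annuli) forces $b = c = 1$ and forces the slopes $\beta,\gamma$ to be complimentary — otherwise the matching of the restricted surfaces across $M_e \cong T^2 \times I$ cannot be done with a connected strongly irreducible surface of the minimal genus, or the genus would exceed $2$, contradicting that $S$ realizes the minimum (which is at most $b+c+1 = 3$). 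With $b=c=1$ and complimentary slopes, $\qbc$ is a Seifert fibered space over the disk with two exceptional-free structure, i.e. essentially $Q^{(1)} \cup Q^{(1)}$ glued along complimentary slopes, and one computes directly $g(S) = b+c = 2$. This simultaneously gives $g(S) = 2$ and the bulleted conditions, so in fact $(1)$ already implies $(2)$; I would then note $(2) \Rightarrow (3)$ trivially and $(3) \Rightarrow (2)$ as follows.

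For $(3) \Rightarrow (2)$: suppose $g(S) = 2$. By Remark~\ref{rmk:graph}, when $\beta,\gamma$ are not complimentary we have $g(\qbc) \ge \dots$ — but this inequality is exactly what we are trying to prove, so I cannot use it circularly. Instead I would argue directly: $g(S) = 2$ means $\qbc$ has a genus-$2$ Heegaard splitting. A genus-$2$ graph manifold with two Seifert pieces glued along one torus is very restrictive; its first Betti number and the structure of its $\mathbb{Z}/2$-homology, together with the fact that $\chi(Q^{(b)}) + \chi(Q^{(c)}) = -(b+c)$, constrain $b+c$. Concretely, $H_1(\qbc)$ has rank at least $b+c$ coming from the vertical tori/annuli boundary structure, and a genus-$2$ manifold has $H_1$ generated by $2$ elements (over $\mathbb{Q}$ the rank is at most $2$), forcing $b + c \le 2$, hence $b = c = 1$. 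Then with $b=c=1$, if $\beta,\gamma$ were non-complimentary the manifold $\qbc$ would contain an essential torus splitting it into two copies of $Q^{(1)}$ with non-complimentary gluing; such a manifold is a genuine graph manifold (not Seifert fibered) and a separate, short argument — again via Schultens' horizontal/vertical analysis applied now to an untelescoping of the genus-$2$ splitting, or via the classification of genus-$2$ graph manifolds — shows its Heegaard genus is $3$, not $2$. So the slopes must be complimentary, giving $(2)$.

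Finally $(2) \Rightarrow (1)$: when $\beta,\gamma$ are complimentary and $b=c=1$, $\qbc$ is a small Seifert fibered space (over the disk or annulus base) whose genus-$2$ Heegaard splitting is the one pictured in Figure~\ref{fig:HSforQbc}; such splittings of Seifert fibered spaces are vertical and are strongly irreducible by Moriah–Schultens. I would simply cite this. The main obstacle I anticipate is the $(3) \Rightarrow (2)$ step, specifically ruling out $g(S) = 2$ when the slopes are non-complimentary with $b = c = 1$ — this is the one place where a soft homological count does not suffice and one must actually invoke the Schultens graph-manifold machinery (or an explicit untelescoping argument showing any genus-$2$ splitting weakly reduces across the essential torus, forcing the genus to be $g(Q^{(1)};\text{annulus bdry}) + g(Q^{(1)};\text{annulus bdry}) - 1 = 3$). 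I would route as much of the work as possible through Theorem~1.1 of~\cite{schultens-graph} to keep this self-contained.
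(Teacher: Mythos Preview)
Your $(1)\Rightarrow(2)$ sketch is broadly in line with the paper's approach: both route through Schultens' structure theorem for strongly irreducible Heegaard surfaces in graph manifolds, though the paper's argument is more careful about why $S\cap M_e$ must fall into Case~(2) of \cite[Theorem~1.1]{schultens-graph} and why that forces $\chi(S)=-2$.

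The genuine gap is in your route back from $(3)$ to $(1)$. The paper does $(3)\Rightarrow(1)$ in one line: $S$ is a \emph{minimal genus} Heegaard surface, so if $g(S)=2$ and $S$ were weakly reducible, an appropriate weak reduction would yield an essential sphere, contradicting irreducibility of $\qbc$. You instead try $(3)\Rightarrow(2)\Rightarrow(1)$, and both steps fail as written.

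For $(3)\Rightarrow(2)$, your homological bound is wrong for manifolds with boundary. A compression body $C$ with $\partial_+C$ of genus $2$ and $\partial_-C$ two tori has $H_1(C)\cong\mathbb{Z}^4$, so a genus-$2$ splitting of a manifold with four torus boundary components can easily have $\operatorname{rank} H_1 = 4$. Indeed $Q^{(1,1)}_{\beta,\gamma}$ with complementary $\beta,\gamma$ \emph{is} such a manifold (four boundary tori, genus~$2$), so your argument would exclude the very case you are trying to reach. The inequality ``genus-$2$ implies $H_1$ is $2$-generated'' only holds for closed manifolds.

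For $(2)\Rightarrow(1)$, your assertion that $Q^{(1,1)}_{\beta,\gamma}$ with complementary slopes is ``a small Seifert fibered space'' is false: complementary means $\beta$ and $\gamma$ intersect once, not that $\beta=\gamma$, so the two Seifert fibrations do not extend across the gluing torus and the manifold is a genuine graph manifold. Moriah--Schultens does not apply. You can repair this step, but the repair \emph{is} the paper's argument: minimality of genus plus irreducibility forbids weak reduction at genus~$2$.
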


\begin{proof}[Proof of Claim~\ref{clm:SisSIinQbc}] (1) implies (2). 
Suppose that $S$ is strongly irreducible.  
By~\cite{schultens-graph} we may assume that $S$ is standard.  In particular, 
$S \cap Q_b$ (respectively  $S \cap Q_c$) is either horizontal, pseudohorizontal, vertical, or pseudovertical.
However, the first two cases are impossible as they require $S$ to meet every boundary component
of $Q_b$ (respectively $Q_c$).  Hence $S \cap Q_b$ and  $S \cap Q_c$
consist of vertical or pseudovertical components.
In particular, the intersection of $S$ with the torus $Q_b \cap M_e$  (respectively $Q_c \cap M_e$)
is a Seifert fiber of $Q_b$ (respectively $Q_c$).

Assume first that $S \cap M_e$ is as in Case~(1) of~\cite[Theorem~1.1]{schultens-graph},
that is,  $S \cap M_e$ is obtained from a collection incompressible annuli, say $\mathcal{A}$,
by tubing along at most one boundary parallel arc (in~\cite{schultens-graph}, tubings are referred to as {\it 1-surgary}).  
Suppose that $\mathcal{A}$ consists of boundary parallel annuli.  Since the tubing is performed, if at all,
along a boundary parallel arc, we see that no component of $S \cap M_e$ connects the components of
$\partial M_e$.  This contradicts the fact that $S$ is connected and must meet both $Q_b$ and $Q_c$.
Hence some component of $\mathcal{A}$ meets both components of $\partial M_e$, showing that $\beta = \gamma$, 
contradicting our assumption.

Hence Case~(2) of~\cite[Theorem~1.1]{schultens-graph} holds, and
$S \cap M_e$ consists of a single
component that is obtained by tubing together two boundary parallel annuli, one at each boundary
component of $M_e$; moreover, \cite[Theorem~1.1]{schultens-graph} shows that 
these annuli define complementary slopes. See the left side of Figure~\ref{fig:HSinMe}. 
As argued above, the slopes 
defined by these annuli are $\beta$ and $\gamma$.  This gives the first condition of~(2).

\newpage

\begin{figure}[h!]
\includegraphics[height=1.5in]{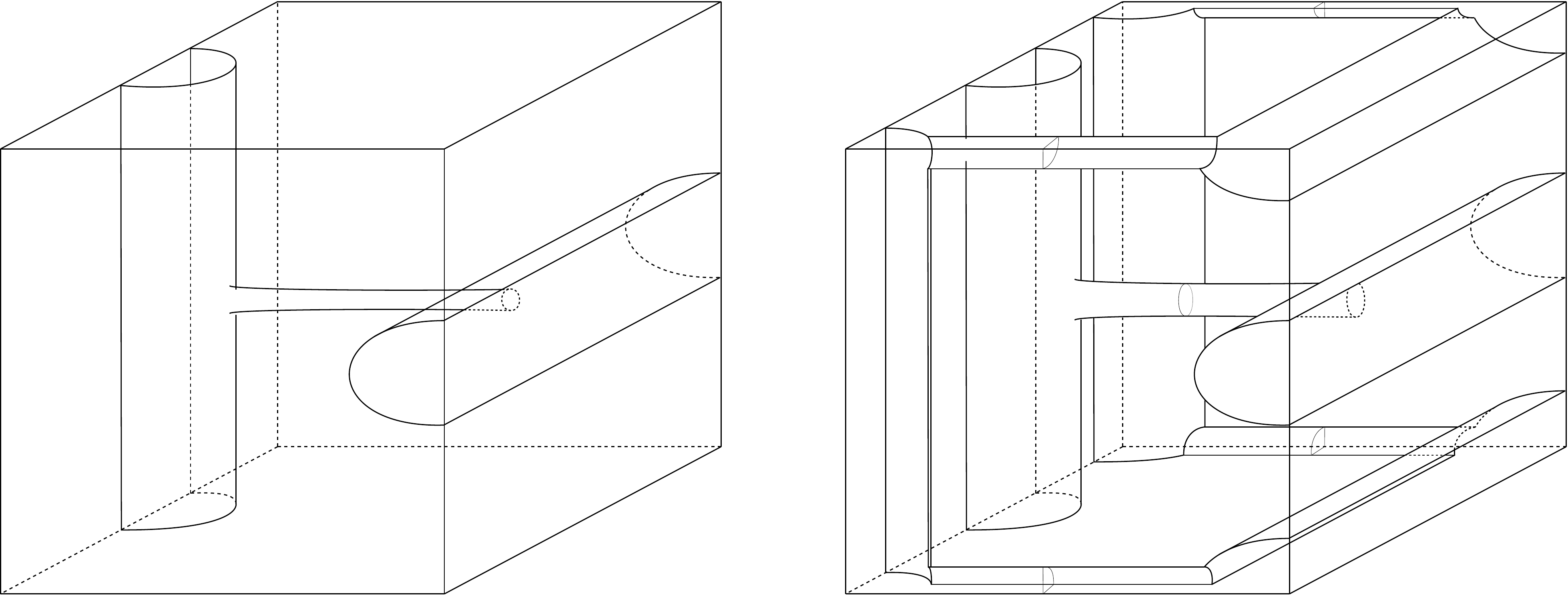}
\caption{Heegaard surface in $M_{e}$}
\label{fig:HSinMe}
\end{figure}

On the right side of Figure~\ref{fig:HSinMe} we see two surfaces.
One is $S \cap M_e$, and in its center we marked the boundary of the obvious compressing disk.
It is easy to see that the other surfce is isotopic to $S \cap M_e$.
On it we marked the boundary of four disks, each shaped like $90^o$ sector.
After gluing of opposite sides of the cube to obtained $M_e$, these sectors form a compressing 
disk on the opposite side of the obvious disk.
This demonstrates that $S \cap M_e$ compresses into both sides.
If $S \cap Q_b$ is pseudovertical then it compresses, and together with one of the compressing
disks for $S \cap M_e$ we obtain a weak reduction, contradicting our assumption.
Hence $S \cap Q_b$ consists of annuli; similarly,  $S \cap Q_c$ consists of annuli.  
Hence $\chi(S) = \chi(S \cap M_{e}) = -2$.  The second condition of~(2) follows.

Since $g(S) = 2$, $\partial \qbc$ consists of at most four tori.  On the other hand, $\partial \qbc$
consists of $b+c+2$ tori, for $b,c \geq 1$.  Hence $b=c=1$, fulfilling
the third and final condition of~(2).
This completes the proof that~(1) implies~(2).

It is trivial that~(2) implies~(3).

To see that~(3) implies~(1), assume that $S$ weakly reduces.  Since $S$ is a minimal genus
Heegaard surface and $g(S) = 2$, an appropriate weak reduction yields an essential sphere,
contradicting the fact that \qbc\ is irreducible.

This completes the proof of Claim~\ref{clm:SisSIinQbc}.
\end{proof}

If $S$ is strongly irreducible, Proposition~\ref{pro:GenusOfQbc} follows from Claim~\ref{clm:SisSIinQbc}.
For the reminder of the proof we assume as we may that $S$ weakly reduces to a (possibly 
disconnected) essential surface, say $F$.  By the construction of \qbc\ we see that every component of $F$
separates; hence by Proposition~\ref{pro:untel to a conn sep surfce implies weak reduction} 
we may assume that $F$ is connected.  Recall that we assumed that $\beta \neq \gamma$.
This clearly implies that we may suppose that (after isotopy if necessary) $F$ is disjoint from the torus $Q^{(b)} \cap Q^{(c)}$;
without loss of generality we assume that $F \subset Q^{(b)}$.  

We induct on $b+c$.

\bigskip

\noindent {\bf Base case: $b+c = 2$.}  Note that in the base case $b = c = 1$. 
It is easy to see that the only connected essential surface in  $Q^{(1,1)}_{\beta,\gamma}$ is the torus 
$Q^{(b)} \cap Q^{(c)}$.  Hence $F$ is isotopic to this surface and the weak reduction 
induces Heegaard splittings $\s_b$ and $\s_c$ on $Q^{(b)}$ and $Q^{(c)}$, respectively; 
note that both $Q^{(b)}$ and $Q^{(c)}$ are homeomorphic to $Q^{(1)}$.
By Schultens~\cite{schultens-FXS1}, $g(Q^{(1)}) = 2$.  
by Lemma~\ref{lem:genus after amalgamation} amalgamation gives:
$$g(Q^{(1,1)}_{\beta,\gamma}) = g(S) = g(\s_b) + g(\s_c) - g(F)
\ge g(Q^{(1)}) + g(Q^{(1)}) - g(F) = 2+2-1=3$$
By Remark~\ref{rmk:graph}, if $\beta$ and $\gamma$ are complimentary slopes
then $g(Q^{(1,1)}_{\beta,\gamma}) \leq 2$; hence 
$\beta$ and $\gamma$ are not complimentary slopes and
together with Remark~\ref{rmk:graph}
the proposition follows in this case.

\bigskip

\noindent {\bf Inductive case: $b+c > 2$.}  Assume, by induction, that 
the proposition holds for any integers $b', c' > 0$, with 
$b' + c' < b+c$.  

\bigskip
\noindent
{\bf Case One: $F$ is isotopic to  $Q^{(b)} \cap Q^{(c)}$.  }
Then weak reduction induces Heegaard 
splittings on $Q^{(b)}$ and  $Q^{(c)}$.
Similar to the argument above (using that
$g(Q^{(b)}) = b+1$ and $g(Q^{(c)}) = c+1$ by~\cite{schultens-FXS1}) we have,
$$g(\qbc) \ge g(Q^{(b)}) + g(Q^{(c)}) - g(F)  = b+c+1$$
As in the base case it follows from Remark~\ref{rmk:graph} that $\beta$ and $\gamma$ are not 
complimentary slopes.  Together with Remark~\ref{rmk:graph}, the proposition follows in this case.

\bigskip
\noindent
{\bf Case Two: $F$ is not isotopic to  $Q^{(b)} \cap Q^{(c)}$.  }
Then $F$ is essential in $Q^{(b)}$ and is therefore isotopic
to a vertical or horizontal surface. 
Since $F$ is closed and $\partial Q^{(b)} \neq \emptyset$, we have that $F$
cannot be horizontal.  We conclude that $F$ is
a vertical torus and decomposes $Q^{(b)}$ as $Q^{(b')}$
(for some $b' < b$) and a disk with $b - b' +1$ holes cross $S^1$.
By induction, the genus of $Q^{(b',c)}_{\beta,\gamma}$ fulfills the
conclusion of Proposition~\ref{pro:GenusOfQbc}; 
by~\cite{schultens-FXS1}, the genus of disk with $b-b'+1$ holes cross $S^1$
is $b-b'+1$; similar to the argument above we get
$$ g(\qbc) \ge g(Q^{(b',c)}_{\beta,\gamma}) + (b-b'+1) - 1 = g(Q^{(b',c)}_{\beta,\gamma}) + b-b'$$

Together with Remark~\ref{rmk:graph}, this completes the proof 
of Proposition~\ref{pro:GenusOfQbc}.
\end{proof}

We are now ready to prove Proposition~\ref{prop:RelativeGenus}:

\begin{proof}[Proof of  Proposition~\ref{prop:RelativeGenus}]
The surfaces in Figure~\ref{fig:RelHS} are relative Heegaard surfaces
realizing the values given in Proposition~\ref{prop:RelativeGenus}.
To complete the proof we only need to show that these surfaces realize
the minimal relative genus.

Let $\Sigma$ be a minimal genus Heegaard surface for $Q^{(c)}$ relative to 
$\{A_{1},\dots,A_{2b}\}$.  By tubing $\partial \Sigma$ along
the annuli $A_{2i}$ and drilling a curve parallel to the core
of $A_{2i}$ ($i=1,\dots,b$; recall Figure~\ref{fig:RelHS})
we obtain a Heegaard surface for \qbc\ of genus $g(S) + b$.  
Thus $g(\s) \geq g(\qbc) - b$.  By Proposition~\ref{pro:GenusOfQbc},
when $\beta$ and $\gamma$ are complimentary $g(\qbc) = b+c$
and when $\beta$ and $\gamma$ are not complimentary $g(\qbc) = b+c+1$.
Thus we see that $g(\s) \geq c$ (when the $\beta$ and $\gamma$ are complimentary)
and $g(\s) \geq c+1$ (otherwise).

This completes the proof of Proposition~\ref{prop:RelativeGenus}.
\end{proof}

\part{An upper bound on the growth rate of the tunnel number of knots} 
\label{part:upper-bound}

\section{Haken Annuli}
\label{sec:haken annuli-definitions}

A primary tool in our study are Haken annuli.
Haken annuli were first defined 
in~\cite{kobayashi-rieck-m-small}, 
where only a single annulus was considered.  We generalize the definition to a collection of annuli below.
Note the similarity between a Haken annulus and a Haken sphere  or Haken disk
(by a {\it Haken sphere} we mean a sphere that meets a
Heegaard surface in a single  \scc\ that is essential  
in the Heegaard surface, see \cite{haken} or
\cite[Chapter 2]{jaco}, and by a {\it Haken disk} we mean a disk that meets a
Heegaard surface in a single   \scc\ that is essential  
in the Heegaard surface \cite{casson-gordon}). 

\begin{dfn}
\label{dfn:haken annulus}

Let $C_1 \cup_\s C_2$ be a Heegaard splitting of a manifold $M$.
A collection of essential annuli $\mathcal{A} \subset M$ are called \em Haken annuli \em
for $C_1 \cup_\s C_2$ (or simply \em Haken annuli, \em when no confusion may arise)
if for every annulus $A \in \mathcal{A}$ we have that $A \cap \s$ consists of a single
\scc\ that is essential in $A$.

\end{dfn}

\begin{rmkk}
\label{rmk:HakenAannuliForDC}
For an integer $n \ge 2$, let $D(n)$ be a (disk with $n$ holes)$\times S^1$ 
and denote the components of $\del D(n)$ by $T_0, T_1, \dots , T_n$. 
By the construction of minimal genus Heegaard splittings given in the 
proof of Proposition~2.14 of \cite{kobayashi-rieck-m-small}, 
we see that for each positive integers $p$ with $1 \le p \le n$
there is a genus $n$ Heegaard surface of
$(D(n); \cup_{i=0}^{p-1} T_i, \cup_{i=p}^n T_i)$ which admits a collection $\{A_1,\dots,A_p\}$
of Haken annuli connecting $T_i$ to $T_n$ ($i=0,\dots,p-1$). 
By Schultens \cite{schultens-FXS1}, we see that this is a minimal genus Heegaard splitting of $D(n)$. 
See Figure~\ref{fig:HSofDn}.
\begin{figure}[h!]
\includegraphics[height=1.5in]{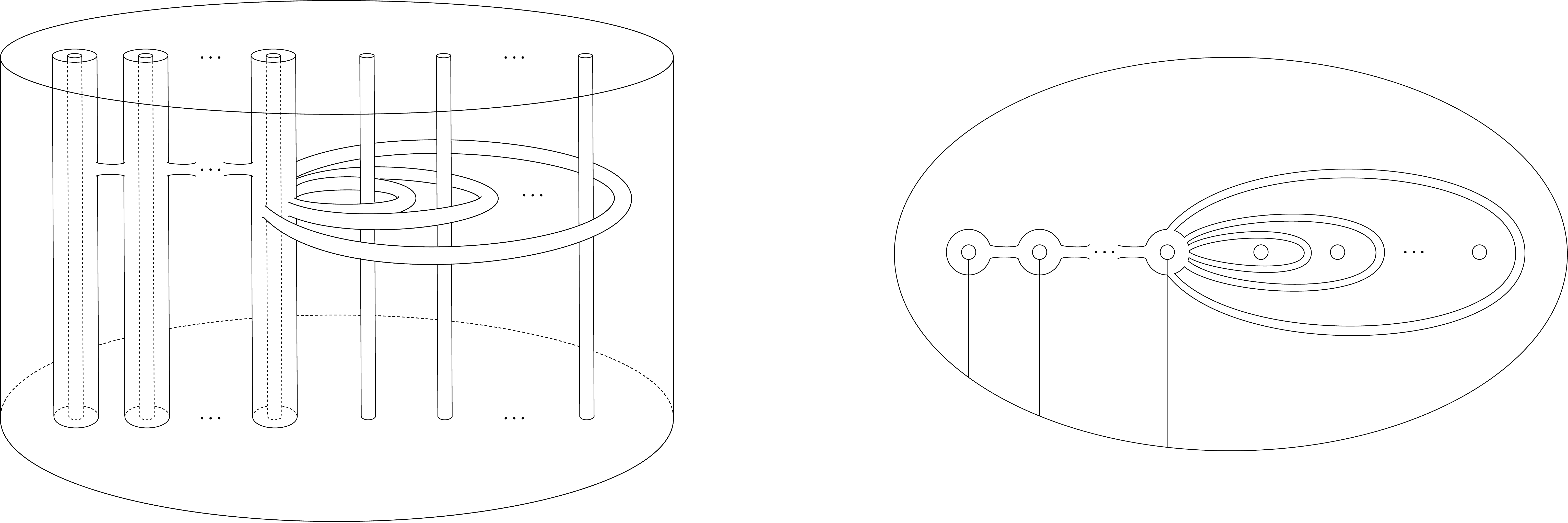}
\caption{Heegaard surface in $D(n)$}
\label{fig:HSofDn}
\end{figure}
\end{rmkk}

In Propositions~3.5 and~3.6 of~\cite{kobayashi-rieck-m-small} 
we studied the behavior of Haken annuli under amalgamation.  
We generalise these propositions as 
Proposition~\ref{pro:haken annulus after amalgamation, general setting} below.
We first explain the construction that is used in Proposition~\ref{pro:haken annulus after amalgamation, general setting}. 
Let $C_{1} \cup_{\s} C_{2}$ be a Heegaard splitting for a manifold $M$ 
that weakly reduces to a (possibly disconnected)
essential surface $F$.   Suppose that $M$ cut open along $F$ consists of two components,
say $M^{(i)}$ $(i=1,2)$.  We denote 
the image of $F$ in $M^{(i)}$ by $F^{(i)}$ and the Heegaard splitting induced on $M^{(i)}$
by $C_{1}^{(i)} \cup_{\s^{(i)}} C_{2}^{(i)}$.  
Suppose that there are Haken annuli for $C_1^{(i)} \cup_{\s^{(i)}} C_2^{(i)}$, say $\mathcal{A}^{(i)}$,
satisfying the following two conditions:  
\begin{itemize}
\item  
there exists a unique component of $\mathcal{A}^{(1)}$, 
say $A^{(1)}$, which intersects 
$F^{(1)}$ in a single simple closed curve, other components are 
disjoint from $F^{(1)}$, and
\item each component of  $\mathcal{A}^{(2)}$ intersects $F^{(2)}$ in a single 
simple closed curve 
isotopic in $F$ to  $A^{(1)} \cap F^{(1)}$.
\end{itemize}
Then let $\widetilde{\mathcal{A}}^{(1)}$ be a collection of mutually disjoint
annuli obtained from $\mathcal{A}^{(1)}$ by substituting $A^{(1)}$ with 
$\vert  \mathcal{A}^{(2)} \vert$ parallel copies 
of $A^{(1)}$ whose boundaries are identified with 
$\mathcal{A}^{(2)} \cap F^{(2)}$.  Finally let  $\widetilde{\mathcal{A}} =
\widetilde{\mathcal{A}}^{(1)} \cup \mathcal{A}^{(2)}$.  Note that
$\widetilde{\mathcal{A}}$ is a system of mutually disjoint annuli  properly
embedded in $M$.  It is easy to adopt the proofs of
Propositions~3.5 and~3.6 of~\cite{kobayashi-rieck-m-small} 
and obtain:

\begin{pro}
\label{pro:haken annulus after amalgamation, general setting}
Let  $M$, $C_1 \cup_\s C_2$, and $\widetilde{\mathcal{A}}$ be as above.  Then
the components of $\widetilde{\mathcal{A}}$ form Haken annuli for $C_1\cup_\s C_2$.
\end{pro}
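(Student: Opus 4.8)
\emph{The plan} is to deduce Proposition~\ref{pro:haken annulus after amalgamation, general setting} from its single-annulus predecessors, Propositions~3.5 and~3.6 of~\cite{kobayashi-rieck-m-small}, by running their proofs simultaneously on the (mutually disjoint) components of $\widetilde{\mathcal{A}}$. Write $\alpha = A^{(1)} \cap F^{(1)}$ for the essential \scc\ on $F$ along which the construction takes place. By hypothesis each component of $\mathcal{A}^{(2)}$ meets $F^{(2)}$ in a single \scc\ isotopic in $F$ to $\alpha$, and $\widetilde{\mathcal{A}}^{(1)}$ is obtained from $\mathcal{A}^{(1)}$ by replacing $A^{(1)}$ with $|\mathcal{A}^{(2)}|$ parallel copies, the $j$-th of which is glued along its copy of $\alpha$ to the $j$-th component $A^{(2)}_j$ of $\mathcal{A}^{(2)}$, producing an annulus $\widehat{A}_j$ that meets $F$ in the single \scc\ $\alpha$. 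Thus the components of $\widetilde{\mathcal{A}}$ are of two kinds: the components of $\mathcal{A}^{(1)} \setminus \{A^{(1)}\}$, which lie in $M^{(1)}$ and are disjoint from $F$, and the mutually disjoint annuli $\widehat{A}_1,\dots,\widehat{A}_{|\mathcal{A}^{(2)}|}$. Recall, finally, that amalgamation modifies $\s^{(1)}$ and $\s^{(2)}$ only inside a collar $N(F) \cong F \times [-1,1]$ of $F$ (say $F \times [-1,0] \subset M^{(1)}$, $F \times [0,1] \subset M^{(2)}$): one fixes a system of arcs dual to the $1$-handles of the two compression bodies meeting $F$ and tubes one induced surface to the other along small neighborhoods of these arcs, the only undetermined data being the isotopy class in $F$ of the feet of these arcs.

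\emph{The components disjoint from $F$.} Let $A$ be a component of $\mathcal{A}^{(1)} \setminus \{A^{(1)}\}$. Taking $N(F)$ small enough that $A \cap N(F) = \emptyset$, the amalgamation leaves $\s$ equal to $\s^{(1)}$ near $A$, so $A \cap \s = A \cap \s^{(1)}$ is a single \scc\ essential in $A$; and $A$ remains essential in $M$, since $F$ is essential and any compression or $\partial$-compression of $A$ in $M$ could be isotoped off $F$ and would then contradict $A$ being a Haken annulus for $\s^{(1)}$. This is Proposition~3.5 of~\cite{kobayashi-rieck-m-small}, applied to each such $A$.

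\emph{The glued components.} Fix $j$. All the curves $\mathcal{A}^{(2)} \cap F^{(2)}$ lie in an annular sub-collar $R \subset F$ of $\alpha$; the parallel copies of $A^{(1)}$ lie in an $I$-bundle neighborhood $N(A^{(1)})$ with $N(A^{(1)}) \cap N(F) = R \times [-1,0]$, and after an isotopy of $A^{(2)}_j$ rel $\partial A^{(2)}_j$ we may likewise take $A^{(2)}_j \cap N(F)$ to be vertical over $R$. Choose the amalgamating arcs with all feet in $F \setminus R$ and, on the $M^{(1)}$ side, disjoint from $N(A^{(1)})$ (possible by general position in $F$, as the feet are free up to isotopy). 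Then the argument of Proposition~3.6 of~\cite{kobayashi-rieck-m-small} applies to the pair $(\text{copy of }A^{(1)},\, A^{(2)}_j)$: with this choice $\s \cap M^{(2)}$ may be isotoped off $A^{(2)}_j$ --- informally, the curve $A^{(2)}_j \cap \s^{(2)}$ is absorbed into a compression body of the amalgamated splitting --- while on the $M^{(1)}$ side the tubes and the disks they replace miss $N(A^{(1)})$; hence $\widehat{A}_j \cap \s = (\text{copy of }A^{(1)}) \cap \s^{(1)}$ is a single \scc, essential in $A^{(1)}$ and therefore in $\widehat{A}_j$. That $\widehat{A}_j$ is essential in $M$ follows as before from essentiality of $F$, of $A^{(1)}$ in $M^{(1)}$, and of $A^{(2)}_j$ in $M^{(2)}$.

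\emph{The main obstacle.} The one substantive input is the claim, inherited from Proposition~3.6 of~\cite{kobayashi-rieck-m-small}, that the amalgamation may be set up so that precisely one of the two a priori intersection curves --- $(\text{copy of }A^{(1)}) \cap \s^{(1)}$ and $A^{(2)}_j \cap \s^{(2)}$ --- survives in $\widehat{A}_j \cap \s$. Once this is in hand for a single pair of annuli, the passage to the parallel family $\{\widehat{A}_j\}$ together with the extra components of $\mathcal{A}^{(1)}$ is bookkeeping: all the curves involved on $F$ sit inside the collar $R$ of $\alpha$, and the annuli of $\widetilde{\mathcal{A}}$ there form an $I$-bundle, so the required routing of the amalgamating arcs and the isotopies of the $A^{(2)}_j$ can all be carried out disjointly and at once. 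This is the sense in which the proofs of Propositions~3.5 and~3.6 of~\cite{kobayashi-rieck-m-small} are ``easily adopted''.
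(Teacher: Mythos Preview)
Your proposal is correct and follows exactly the approach the paper indicates: the paper does not give a proof at all, merely stating that ``it is easy to adopt the proofs of Propositions~3.5 and~3.6 of~\cite{kobayashi-rieck-m-small}'' before stating the proposition. You have carried out precisely that adaptation, supplying the details the paper omits.
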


\section{Various decompositions of knot exteriors}
\label{sec:tfae}

In this section we compare two structures: Hopf-Haken
annuli and $(h, b)$  decompositions.  After defining the two we prove
(Theorem~\ref{thm:tfae}) that they are equivalent.

Let $K$ be a knot in a 3-manifold $M$
and $h \ge 0$, $b \ge 1$ integers. 
We say that $K$ admits a {\it $(h, b)$ decomposition}
(some authors use the term genus $h$, $b$ bridge position)
if there exists a genus $h$ Heegaard 
splitting $C_1 \cup_\s C_2$ of $M$ such that $K \cap C_i$ is a 
collection of $b$ simultaneously boundary parallel arcs 
($i=1,2$; note that  in this paper we do not consider $(h,0)$ decomposition).

Let $K$ be a knot in a compact manifold $M$.  Recall that $E(K)^{(c)}$
is obtained from $E(K)$ by removing $c$ curves that are simultaneously 
isotopic to meridians of $K$.  The trace of the isotopy forms $c$ annuli which
motivates the definition below (Definitions~\ref{dfn:hopf-annuli} and~\ref{dfn:hopf-haken-annuli}
generalize Definition~6.1 of~\cite{kobayashi-rieck-m-small}):

\begin{figure}
  \label{fig:bridge-position}
\end{figure}

\begin{dfn}[a complete system of Hopf annuli]
\label{dfn:hopf-annuli}
Let $K \subset M$ be a knot in a compact manifold and $c > 0$ an integer.
Let $A_{1},\dots,A_{c}$ be annuli disjointly embedded in $E(K)^{(c)}$ so that 
for each $i$, one component of $\del A_{i}$ is a meridian of $\del N(K)$ and 
the other is a longitude of $T_{i}$ (recall $T_{1},\dots,T_{c}$ 
denote the components of $\del E(K)^{(c)} \setminus \del E(K)$).  
Then $\{ A_{1},\dots,A_{c} \}$ is called a {\it complete system of Hopf annuli}.
We emphasize that the complete system of Haken annuli for $E(K)^{(c)}$ 
 is \em not \em unique up-to isotopy.
\end{dfn}

\begin{dfn}[a complete system of Hopf-Haken annuli]
\label{dfn:hopf-haken-annuli}
Let $K \subset M$ be a knot in a compact manifold, $c > 0$ an integer,
$\s$ a Heegaard surface for $E(K)^{(c)}$, and  
$\{ A_{1},\dots,A_{c} \}$ a complete system of Hopf annuli.   
$\{ A_{1},\dots,A_{c} \}$ is called a
{\it complete system of Hopf-Haken annuli for $\s$} if for each $i$, $\s \cap A_i$ is a
single simple closed curve that is essential in $A_i$.
\end{dfn}

\begin{dfn}[Tubing bridge decomposition]
\label{dfn:tubing}
Let $K \subset M$ be a knot in a compact manifold, $\s$ a Heegaard
surface for $E(K)$, and $c > 0$ an integer.  
Suppose that there exists a genus $h-c$ Heegaard surface for $M$ (say $S$)
so that $K$ is $c$ bridge with respect to $S$, and
the surface obtained by tubing $S$ along
$c$ arcs of $K$ cut along $S$ on one side of $S$ is isotopic to $\s$.
Then we say that $\s$ is obtained by {\it tubing $S$ to one side (along $K$)}.
See Figure~\ref{fig:tubing}.
\begin{figure}[h!]
\includegraphics[height=3.5in]{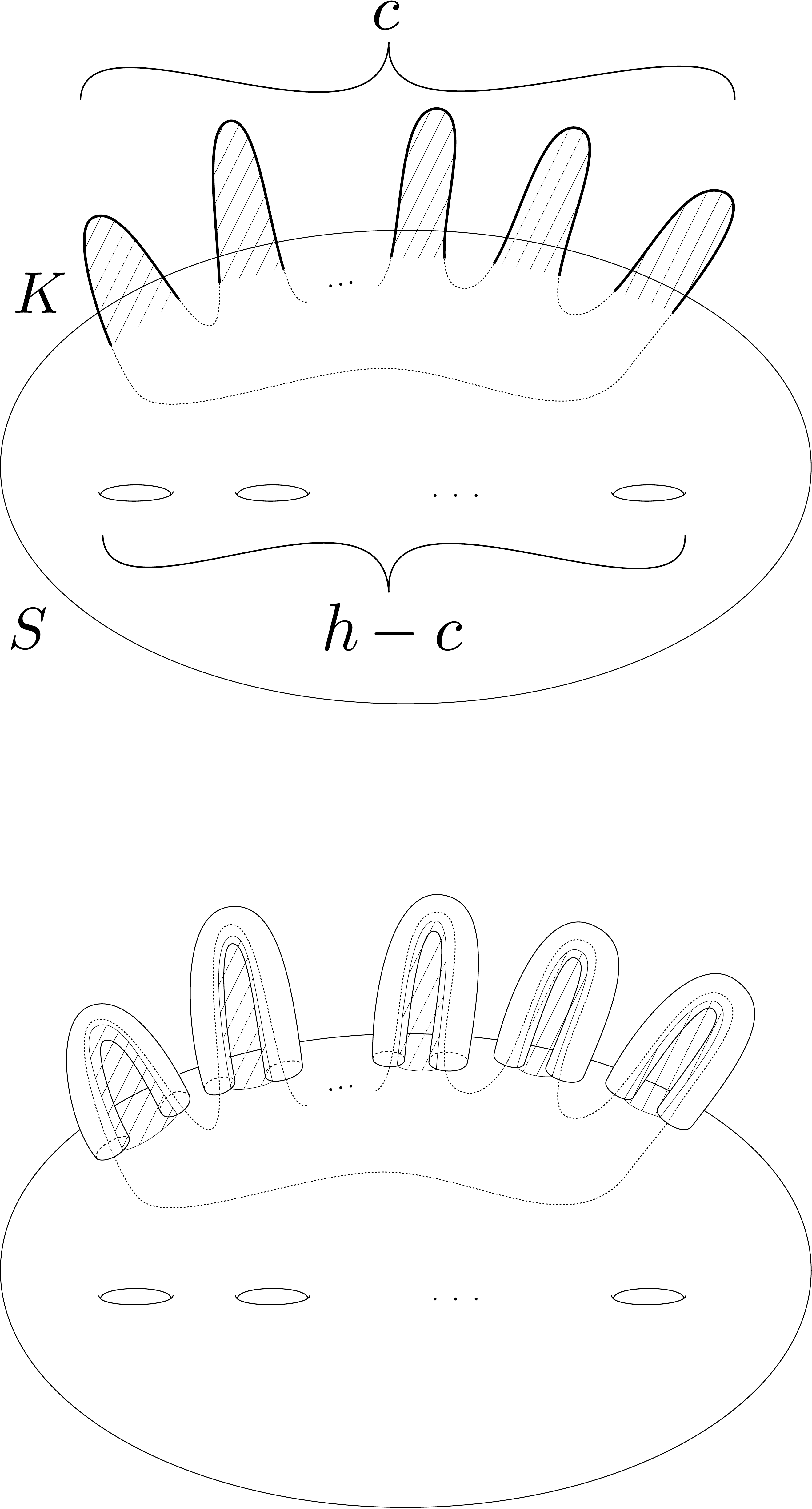}
\caption{Tubing a $(h-c, c)$-decomposition}
\label{fig:tubing}
\end{figure}
\end{dfn}

\begin{thm}
\label{thm:tfae}
Let $M$ be a compact manifold and $K \subset M$ a knot and suppose 
the meridian of $K$ does not bound a disk in $E(K)$.  
Let $c, \ h$ be positive integers.
Then the following two conditions are equivalent:

\begin{enumerate}

\item  $K$ admits an $(h-c, c)$ decomposition.

\item $E(K)^{(c)}$ admits a genus $h$ Heegaard splitting that admits a complete system of Hopf-Haken
  annuli. 
  
\end{enumerate}

\end{thm}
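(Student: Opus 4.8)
The plan is to prove the two implications separately, using the tubing construction of Definition~\ref{dfn:tubing} as the bridge between the two structures. For the direction $(1) \Rightarrow (2)$, I would start with a genus $h-c$ Heegaard splitting $C_1 \cup_\s C_2$ of $M$ realizing the $(h-c,c)$ decomposition of $K$, so $K \cap C_i$ consists of $c$ boundary parallel arcs for $i=1,2$. The idea is to build $E(K)^{(c)}$ directly out of this picture. Each boundary parallel arc $\alpha$ of $K \cap C_1$ cobounds a disk $D_\alpha$ with an arc $\alpha'$ on $\s$; pushing a parallel copy of $\alpha$ across $D_\alpha$ and out slightly exhibits one of the $c$ meridian curves to be drilled, and the annulus swept out is (a piece of) the corresponding Hopf annulus $A_i$. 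More precisely, I would tube $\s$ to one side along the $c$ arcs of $K$ in $C_1$, obtaining a genus $h$ Heegaard surface $\Sigma$ for $E(K)$; then drilling out a curve running along each tube produces $E(K)^{(c)}$ together with a genus $h$ Heegaard surface for it (tubing $\Sigma$ around a small loop keeps it Heegaard, drilling along the core of a tube does too). By construction the annuli $A_i$ — with one boundary a meridian on $\partial N(K)$ and the other a longitude of the new torus $T_i$ — meet this surface in exactly one essential curve each, so they form a complete system of Hopf--Haken annuli. I would need to verify carefully that drilling along the cores of the tubes yields genus exactly $h$ and that the resulting manifold is genuinely $E(K)^{(c)}$ and not something with extra stabilizations, but this is a standard handle-slide / regular-neighborhood bookkeeping argument.

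For the direction $(2) \Rightarrow (1)$, I would reverse the construction. Suppose $E(K)^{(c)}$ has a genus $h$ Heegaard surface $\Sigma'$ with a complete system of Hopf--Haken annuli $A_1,\dots,A_c$, where $A_i$ meets $\Sigma'$ in a single essential curve. Since each $A_i$ is a Hopf annulus with one boundary a longitude of $T_i$, I can fill back the curves (Dehn fill the $T_i$ along those longitudes) to recover $E(K)$: filling along the Hopf annulus $A_i$ the essential intersection curve $\Sigma' \cap A_i$ bounds a meridian disk on the filling side, so the effect on the Heegaard surface is to compress it once along each $A_i$ — equivalently, $\Sigma'$ is obtained from a lower genus surface $S$ by tubing along $c$ arcs of $K$. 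The surface $S$ obtained by this compression is a Heegaard surface for $M$ of genus $h-c$, and the arcs of $K$ that the tubes followed become exactly $c$ boundary parallel arcs of $K$ in each compression body of $S$ — the boundary parallelism comes precisely from the fact that $\Sigma' \cap A_i$ was essential in $A_i$, so the sub-disk of $A_i$ on each side of $\Sigma'$ supplies the parallelism. Here I would invoke the hypothesis that the meridian of $K$ does not bound a disk in $E(K)$: this is what guarantees that the compressed surface $S$ does not degenerate (e.g.\ that the core curves we would need are genuinely essential, so no unintended reduction of genus or of bridge number occurs), and it is what makes the two integers match up as $h$ and $h-c$ rather than something smaller.

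The main obstacle, I expect, is the careful tracking of genus and of the bridge count through the tubing/drilling operations — in particular making sure that in the $(2) \Rightarrow (1)$ direction the compression along the $A_i$'s produces a surface that is still a genuine Heegaard surface (i.e.\ that both sides remain compression bodies) and that the arcs it leaves behind are exactly $c$ simultaneously boundary parallel arcs on \emph{each} side, with no hidden extra crossings of the new surface. This is where the essentiality of $\Sigma' \cap A_i$ in $A_i$ does the real work: it forces each intersection curve to cut $A_i$ into two sub-annuli, one on each side of $\Sigma'$, and the cores of those sub-annuli are the arcs of $K$ that witness the $c$-bridge position. I would organize the proof so that this local analysis of an annulus meeting a Heegaard surface in an essential curve is stated once as a lemma and then applied $c$ times; the rest is the global amalgamation/tubing bookkeeping, which can lean on the genus formula of Lemma~\ref{lem:genus after amalgamation} and on Definition~\ref{dfn:tubing}.
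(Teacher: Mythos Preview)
Your argument for $(1)\Rightarrow(2)$ is essentially the paper's: tube the bridge surface along the $c$ arcs on one side, drill curves parallel to the cores of the tubes, and read off the Hopf--Haken annuli from the traces of the isotopy. That direction is fine.

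The gap is in $(2)\Rightarrow(1)$. You propose to Dehn fill each $T_i$ along the slope $\partial A_i\cap T_i$ (what you call ``those longitudes'') in order to recover $E(K)$. But that slope is the \emph{longitude} of the drilled solid torus, not its meridian; filling along it does not return $E(K)$. Indeed, after such a filling the Hopf annulus $A_i$ caps off to a disk whose boundary is a meridian of $K$, so in the filled manifold the meridian of $K$ bounds a disk --- contradicting the very hypothesis you plan to invoke. Conversely, if you fill $T_i$ along the correct (meridional) slope, you do recover $E(K)$, but then the curve $\Sigma'\cap A_i$ no longer bounds any disk in the filling solid torus (it is isotopic to the core, not to a meridian disk), so your ``compress once along each $A_i$'' step evaporates. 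Either way the fill-and-compress mechanism, as written, does not run.

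The paper avoids this by \emph{cutting} rather than filling: set $E(K)'=\mathrm{cl}\bigl(E(K)^{(c)}\setminus\bigcup_i N(A_i)\bigr)$, observe that $E(K)'\cong E(K)$, and let $S'=\Sigma\cap E(K)'$. The substantive work is then to show that, after meridional filling of $E(K)'$, the capped-off surface $S$ is a genus $h{-}c$ Heegaard surface of $M$ with $K$ in $c$-bridge position. This is where the hypothesis is used, and more sharply than your sketch suggests: because the meridian does not bound a disk in $E(K)$, each half-annulus $A_j\cap C_i$ is incompressible in its compression body, so a complete meridian-disk system $\mathcal{D}_i$ for $C_i$ can be chosen disjoint from all the $A_j$. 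That disjointness is exactly what makes the annuli $T_j\cap C_2'$ (and their analogues on the $\partial N(K)$ side) a \emph{primitive} system in the cut-open compression bodies $C_i'$; primitivity in turn is what guarantees that attaching the $c$ two-handles yields a genus $h{-}c$ compression body with the cocores (the arcs of $K$) simultaneously boundary parallel. Your outline never isolates this primitivity step, and without it there is no reason the de-tubed surface should remain a Heegaard surface of the correct genus, let alone place $K$ in bridge position on both sides.
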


\begin{proof}
\noindent {$(1) \implies (2)$:}  
Let $S \subset M$ be a surface defining a
($h-c$, $c$) decomposition.  Then $S$ separates $M$ into two sides, say
``above'' and ``below''; pick one, say above.  Since the arcs of $K$ above $S$
form $c$ boundary parallel arcs (say $\alpha_1,\dots,\alpha_c$), there are $c$
disjointly embedded disks above $K$ (say $D_1,\dots,D_c$) so that $\del D_i$
consists of two arcs, one  $\alpha_i$ and the other along $S$
(for this proof, see Figure~\ref{fig:tubing}).  Tubing $S$ 
$c$ times along $\alpha_{1},\dots,\alpha_{c}$ 
we obtain a Heegaard surface for $E(K)$ (say $\s$).
We may assume that the tubes are small enough, so that they
intersect each $D_{i}$ in a single spanning arc.
Denote the compression bodies obtained by cutting $E(K)$ along $\s$ by $C_1$ and
$C_2$ with $\del N(K) \subset \partial_{-} C_1$.
Then each $D_i \cap C_2$ is a meridional disk.  Let $A_1,\dots,A_c$ be $c$
meridional annuli properly embedded in $C_1$ 
near the maxima of $K$. 
Then $(\cup_i A_i) \cap \del N(K)$ consists of $c$ meridians, say
$\alpha'_1,\dots,\alpha'_c$.  For each $i$, we isotope $\alpha'_i$ along the
annulus $A_i$ to the curve $A_i \cap \s$ and then push it slightly into
$C_2$, obtaining $c$ curves, 
say $\beta_1,\dots,\beta_c$,  parallel to meridians.  
Drilling $\cup_i \beta_i$ out of $E(K)$
gives $E(K)^{(c)}$.  Using the disks $D_i \cap C_2$ it is easy to see that
$\s$ is a Heegaard surface for $E(K)^{(c)}$.  Clearly, the trace of the isotopy 
from $\cup_{i=1}^n \alpha'_i$ to $\cup_{i=1}^n \beta_i$ 
forms a complete system of Hopf annuli, and
by construction every one of these annuli intersects $\s$ in a single curve
that is essential in the annulus.  
This completes the proof of $(1) \implies (2)$.

\medskip

\noindent {$(2) \implies (1)$:}  
Assume that $E(K)^{(c)}$ admits a Heegaard surface of genus $h$, say $\s$, 
with a complete system of Hopf-Haken annuli, say $\{ A_1, \dots , A_c \}$. 
Let $E(K)' = \mbox{cl}(E(K)^{(c)} \setminus \cup_i N(A_i))$.  Note that $E(K)'$ is 
homeomorphic
to $E(K)$.  Let $S'$ be the meridional surface $\s \cap E(K)'$.  
We may consider $M$ as obtained from $E(K)'$ by meridional Dehn filling 
and $K$ as the core of the attached solid torus. 
By capping off $S'$ we obtain a closed
surface $S \subset M$.  The following claim completes the proof of $(2)
\implies (1)$:

\begin{clm}
\label{clm:ObtainingBridgeDecomposition}
$S$ defines a ($h-c$, $c$) decomposition for $K$.
\end{clm}

\begin{proof}[Proof of claim]
Recall that the components of $\del E(K)^{(c)} \setminus \del E(K)$
were denoted by $T_1, \dots , T_c$, as in Definition~\ref{dfn:hopf-haken-annuli},
so that $A_i \cap T_i \ne \emptyset$ and 
$A_i \cap T_j = \emptyset$ (for $i \ne j$). 
Let $C_1$, $C_2$ be the compression bodies obtained from
$E(K)^{(c)}$ by cutting along $\s$, where $\del N(K) \subset \del_- C_1$.  
Since $\s \cap A_i$ is a single simple closed 
curve which is essential in $A_i$ we have $T_i \subset \del _- C_2$ ($i=1,\dots,c$). 
Denote the annulus $A_j \cap C_i$ by $A_{i,j}$ ($i=1,2$, $j=1, \dots , c$). 

Let $C_i' = C_i \cap E(K)'$ $(i=1,2)$. 
It is clear that $S'$ cuts $E(K)'$ into $C_1'$ and $C_2'$. 
Since $A_i \cap \del N(K)$ is a meridian of $K$, and by assumption
the meridian of $K$ does not bound a disk in $E(K)$, we have that
$A_{i,j}$ is incompressible in $C_i$.
Hence a standard innermost disk, outermost arc argument  
shows that there is a system of meridian disks $\mathcal{D}_i$ of 
$C_i$ which cuts $C_i$ into 
$\del_-C_i \times [0,1]$ such that 
$\mathcal{D}_i \cap (\cup A_{i,j})=\emptyset$. 

Now we consider $C_2$ cut along $\cup A_{2,j}$.
Since $\mathcal{D}_2 \cap (\cup A_{2,j})=\emptyset$, there are components 
$T_1 \times [0,1], \dots , T_c \times [0,1]$ of $C_2$ cut along $\mathcal{D}_2$, 
where $A_{2,j} \subset T_j \times [0,1]$ $(j=1, \dots , c)$. 
Here we note that $T_j \times [0,1]$ cut along $A_{2,j}$ is a solid torus 
in which the image of $T_j \times \{ 0 \}$ is a longitudinal 
annulus (note that the image of $T_j \times \{ 0 \}$ is exactly  $T_j \cap C_2'$).  
This shows that $\{ T_1 \cap C_2', \dots , T_c \cap C_2' \}$ is a primitive system of annuli in $C_2'$, 
that is, there is a system of meridian disks $D_{2,1}, \dots , D_{2,c}$ in
$C_2'$ such that $D_{2,j}\cap (T_j \cap C_2')$ consists of a spanning arc of $T_j \cap C_2'$, and 
$D_{2,j}\cap (T_k \cap C_2') = \emptyset$ $(j \ne k)$. 
Let $C_2''$ be the manifold obtained from $C_2'$ 
by adding $c$ 2-handles along $T_1 \cap C_2', \dots , T_c \cap C_2'$. 
Since $\{ T_1 \cap C_2', \dots , T_c \cap C_2' \}$ is primitive, $C_2''$ is a genus $(h-c)$ compression body, and the union 
of the co-cores of the attached 2-handles, which can be regarded as $K \cap C_2''$, 
are simultaneously isotopic (through the disks $\cup D_{2,j})$ into $\del_+ C_2''$. 

Analogously since
$\mathcal{D}_1 \cap (\cup A_{1,j})=\emptyset$, 
there are $c$ components of $C_1$ cut by 
$\mathcal{D}_1 \cup  (\cup A_{1,j})$  which are 
solid tori such that $\del N(K)$ intersects each solid torus in 
a longitudinal annulus. 
Then the arguments in the last paragraph show that $K \cap C_1''$ 
consists of $c$ arcs which are simultaneously parallel to $S$. 

These show that $S$ gives a $(h-c, c)$ decomposition for $K$, completing the proof of the claim. 
\end{proof}

This completes the proof of Theorem~\ref{thm:tfae}.
\end{proof}

\begin{cor}
\label{cor:upper-bound-for-X(b)}
Let $K$ be a knot in a compact manifold $M$, and suppose that for some 
positive integers $h$ and $c$, $K$ admits a
($h-c$, $c$) decomposition.  Then

$$g(E(K)^{(c)}) \leq h$$
\end{cor}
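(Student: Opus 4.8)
The statement to prove is Corollary~\ref{cor:upper-bound-for-X(b)}, which asserts that if $K\subset M$ admits an $(h-c,c)$ decomposition, then $g(E(K)^{(c)})\le h$.

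The plan is to read this off directly from Theorem~\ref{thm:tfae}, the equivalence between $(h-c,c)$ decompositions and Hopf--Haken annuli on a genus $h$ Heegaard surface of $E(K)^{(c)}$. First I would note that the hypothesis of the corollary includes that $K$ admits an $(h-c,c)$ decomposition, which by definition means $K$ is $c$-bridge with respect to a genus $h-c$ Heegaard surface of $M$; in particular $K$ is not contained in a ball bounded by that surface, so the meridian of $K$ does not bound a disk in $E(K)$. (Alternatively, if the meridian did bound a disk in $E(K)$ then $E(K)$ would be reducible, and one can handle or exclude this degenerate case separately; but the cleanest route is to observe that a $(h-c,c)$ decomposition with $c\ge 1$ forces the meridian to be non-trivial in $E(K)$.) Thus the hypothesis of Theorem~\ref{thm:tfae} is satisfied.

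Next I would invoke the implication $(1)\implies(2)$ of Theorem~\ref{thm:tfae}: since $K$ admits an $(h-c,c)$ decomposition, $E(K)^{(c)}$ admits a genus $h$ Heegaard splitting (one that carries a complete system of Hopf--Haken annuli, though for the corollary we only need the genus bound). The existence of a genus $h$ Heegaard splitting of $E(K)^{(c)}$ immediately gives $g(E(K)^{(c)})\le h$, since the Heegaard genus is by definition the minimum over all Heegaard splittings. That completes the proof.

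The main (and essentially only) obstacle is the hypothesis mismatch: Theorem~\ref{thm:tfae} requires that the meridian of $K$ not bound a disk in $E(K)$, whereas Corollary~\ref{cor:upper-bound-for-X(b)} omits this. So the one point needing care is to argue that the corollary's hypothesis already implies this. The argument is short: if $K$ is $c$-bridge with $c\ge1$ with respect to some Heegaard surface $S$ of $M$, then each arc of $K$ on a given side is boundary-parallel in a compression body, and a meridian disk of $K$ would project to a sphere or compressing disk configuration incompatible with the bridge position being non-trivial; more directly, a meridian bounding a disk in $E(K)$ would make $N(K)\cup(\text{that disk}\times I)$ a ball, forcing $K$ to be a trivial knot in a ball, which contradicts $c\ge 1$ in any bridge decomposition (a $0$-bridge decomposition is explicitly excluded, and a genuine $c$-bridge position with $c\ge1$ of a knot whose meridian bounds a disk cannot occur). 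Once this is dispatched, the rest is immediate from Theorem~\ref{thm:tfae}.
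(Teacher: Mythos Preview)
Your core approach---invoke the implication $(1)\Rightarrow(2)$ of Theorem~\ref{thm:tfae} to produce a genus $h$ Heegaard surface for $E(K)^{(c)}$---is exactly what the paper does; the paper's proof is a single sentence to this effect.

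However, your attempt to verify the extra hypothesis of Theorem~\ref{thm:tfae} (that the meridian of $K$ does not bound a disk in $E(K)$) is both unnecessary and incorrect. It is unnecessary because, if you inspect the proof of Theorem~\ref{thm:tfae}, that hypothesis is used only in the direction $(2)\Rightarrow(1)$ (to ensure the annuli $A_{i,j}$ are incompressible); the construction for $(1)\Rightarrow(2)$ never invokes it, so the genus bound follows regardless. It is incorrect because your claim that a knot whose meridian bounds a disk in $E(K)$ cannot admit any $(h-c,c)$ decomposition with $c\ge 1$ is false: take $K=\{\mathrm{pt}\}\times S^{1}$ in $M=S^{2}\times S^{1}$. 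Here $E(K)\cong D^{2}\times S^{1}$ and the meridian of $K$ bounds the disk $D^{2}\times\{\mathrm{pt}\}$ in $E(K)$; yet $K$ is isotopic into the genus~$1$ Heegaard torus of $M$ and hence admits a $(1,1)$ decomposition. (Your intermediate assertion that such a $K$ must be a trivial knot in a ball is also wrong in this example, since $[K]\neq 0$ in $H_{1}(M)$.) So the clean fix is simply to drop that paragraph and note that only the direction $(1)\Rightarrow(2)$ is needed.
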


\begin{proof}
This follows immediately from (1) $\implies$ (2) of Theorem~\ref{thm:tfae}.
\end{proof}

\section{Existence of swallow follow tori and bounding $g(E(K_1 \# \cdots \# K_n)^{(c)})$ above}
\label{sec:sft}

\begin{dfn}[Swallow Follow Torus] 
\label{dfn:sft}
Let $K \subset M$ be a knot and $c\ge 0$ an integer.  An essential separating torus $T \subset E(K)^{(c)}$ is called a
{\it swallow follow torus} if there exists an embedded annulus $A \subset E(K)^{(c)}$ with one
component of $\del A$ a meridian of $E(K)^{(c)}$ and the other an essential curve of $T$,
so that $\mbox{int}(A) \cap T = \emptyset$.
\end{dfn}

In this definition (and throughout this paper) we allow $K$
to be the unknot in $S^3$, in which case $E(K)^{(c)}$ is homeomorphic to a disc with
$c$ holes cross $S^1$, and it admits swallow follow tori whenever $c \geq 3$.

Given a swallow follow torus $T$ and an annulus $A$ as above, we can surger
$T$ along $A$ to obtain a separating meridional annulus.  It is easy to see that since
$T$ is an essential torus, the annulus obtained is essential as well.
Conversely, given an essential separating meridional annulus we can tube the annulus to
itself along the boundary obtaining a swallow follow torus
 (this can be done in two distinct ways).

How does a swallow follow torus decompose a knot exterior?  We first
consider the case $c=0$.  Let $K = K_1 \#
K_2$ be a composite knot (here we are not assuming that $K_1$ or $K_2$
is prime).   Let $\mathcal{A}$ be a decomposing annulus corresponding to the
decomposition of $K$ as $K_1 \# K_2$.  
Thus  $E(K)=E(K_1) \cup_{\mathcal{A} } E(K_2)$. Tubing $\mathcal{A}$ along the boundary (say into
$E(K_2)$) we obtain a swallow follow torus, say $T$.  Clearly, one component
of $E(K)$ cut open along $T$ is homeomorphic to $E(K_2)$.
The other component is homeomorphic to $E(K_1)$ with two meridional
annuli identified, and hence homeomorphic to $E(K_1)^{(1)}$.  Thus we see that a
swallow follow torus $T \subset E(K)$ decomposes $E(K)$ as $E(K_1)^{(1)} \cup_T E(K_2)$.
More generally, given $K,\ K_1,\ K_2$ as above and integers $c,\ c_1,\ c_2
\geq 0$ with $c_1 + c_2 = c$, let $\mathcal{A}$ be a decomposing annulus for
$E(K)^{(c)}$, so that
$E(K)^{(c)} = E(K_1)^{(c_1)} \cup_{\mathcal{A} } E(K_2)^{(c_2)}$.  The swallow
follow torus obtained by tubing $\mathcal{A}$ into $E(K_2)^{(c_2)}$ decomposes
$E(K)^{(c)}$ as $E(K_1)^{(c_1 + 1)} \cup_T E(K_2)^{(c_2)}$.
Since the components of $E(K)^{(c)}$ cut open along a swallow follow torus 
are themselves of the form $E(K_1)^{(c_1 + 1)}$ and $E(K_2)^{(c_2)}$,
we may now extend Definition~\ref{dfn:sft} inductively:
\begin{dfn}[Swallow Follow Tori] 
\label{dfn:sfts}
Let $K$ and $c$ be as in the previous paragraph.
Let $T_1,\dots,T_r$  (for some $r$) be disjointly embedded tori in $E(K)^{(c)}$.
Then $T_1,\dots,T_r$ are called \em swallow follow
tori \em if the following two conditions hold, perhaps after reordering the indices:
\begin{enumerate}
\item $T_1$ is a \sft\ for $E(K)^{(c)}$.
\item For each $i \ge 2$,
$T_i$ is a swallow follow torus for some component of $E(K)^{(c)}$ 
cut open along $\cup_{j=1}^{i-1} T_{j}$.
\end{enumerate}

\end{dfn}

We are now ready to state and prove:

\begin{pro}[Existence of Swallow Follow Tori]
\label{pro:sft}
For $i=1,\dots,n$, let $K_{i}$ be a (not necessarily prime) 
knot in a compact manifold and let $c \geq 0$ be an integer. 
Suppose that  $E(K_{i}) \not\cong T^{2} \times [0,1]$ and
$\partial N(K_{i})$ is incompressible in $E(K_{i})$.

Then given any integers $c_{1},\dots,c_{n} \geq 0$ whose sum is $c+n-1$,
there exist $n-1$ swallow follow tori, denoted $\mathcal{T}$, 
that decompose $E(\#_{i=1}^{n}K_{i})^{(c)}$ as:

$$E(\#_{i=1}^{n}K_{i})^{(c)} = \cup_{\mathcal{T}} E(K_i)^{(c_i)}$$
\end{pro}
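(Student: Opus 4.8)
The plan is to prove Proposition~\ref{pro:sft} by induction on $n$. The base case $n = 1$ is vacuous: there are no swallow follow tori required, $c_1 = c$, and $E(K_1)^{(c)} = E(K_1)^{(c_1)}$ trivially. For the inductive step, suppose the statement holds for fewer than $n$ knots. Given $K_1,\dots,K_n$ and integers $c_1,\dots,c_n \geq 0$ summing to $c+n-1$, I would first isolate $K_n$. Write $K' = \#_{i=1}^{n-1} K_i$, so that $\#_{i=1}^n K_i = K' \# K_n$. As explained in the paragraph preceding Definition~\ref{dfn:sfts}, given a decomposing annulus $\mathcal{A}$ for $E(K'\#K_n)^{(c)}$ realizing the connected sum decomposition together with a distribution of the $c$ drilling curves into $c'$ on the $K'$ side and $c_n$ on the $K_n$ side (with $c' + c_n = c$), one can tube $\mathcal{A}$ into $E(K_n)^{(c_n)}$ to obtain a swallow follow torus $T_1$ that decomposes $E(K'\#K_n)^{(c)}$ as $E(K')^{(c'+1)} \cup_{T_1} E(K_n)^{(c_n)}$. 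The choice I want is $c_n$ exactly as given and $c' = c - c_n$; I need to check that $c_n \le c$, i.e. that the distribution is legitimate, which follows since $c_1,\dots,c_{n-1} \geq 0$ forces $c_n \le c + n - 1 - (n-1) \cdot 0$, but more care is needed — see below.

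The key arithmetic point: after forming $T_1$, I need $E(K')^{(c'+1)}$ to admit $n-2$ swallow follow tori decomposing it as $\cup E(K_i)^{(c_i)}$ for $i = 1,\dots,n-1$, using the \emph{same} integers $c_1,\dots,c_{n-1}$. By the induction hypothesis applied to the $n-1$ knots $K_1,\dots,K_{n-1}$ with ambient drilling parameter $c' + 1$, this requires $\sum_{i=1}^{n-1} c_i = (c'+1) + (n-1) - 1 = c' + n - 1 = (c - c_n) + n - 1$. On the other hand we are given $\sum_{i=1}^{n-1} c_i = (c + n - 1) - c_n = c - c_n + n - 1$. These agree exactly, so the induction goes through provided $c_n \le c + 1$ (so that $c' + 1 = c - c_n + 1 \geq 0$ makes sense as a drilling parameter, i.e. $c' \geq 0$ requires $c_n \le c$; if $c_n = c+1$ we'd have $c' = -1$ which is a genuine edge case). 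I would handle the edge case by a direct argument: if $c_n > c$, then since the remaining $c_i$ sum to $c + n - 1 - c_n < n - 1$, by pigeonhole some $c_j = 0$ for $j < n$; I can then first split off that $K_j$ (which contributes a swallow follow torus turning $K_j$'s exterior into $E(K_j)^{(0)} = E(K_j)$ and leaving the rest with parameters summing appropriately) and re-order, or more cleanly peel off knots with $c_i = 0$ first so that when I get to $K_n$ the relevant $c'$ is nonnegative.

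Once the tori are produced, I must verify they are genuinely swallow follow tori in the sense of Definition~\ref{dfn:sfts}: $T_1$ is a swallow follow torus for $E(\#_{i=1}^n K_i)^{(c)}$ by the explicit construction (it is the tube of an essential decomposing annulus, and it is essential because connected sum annuli are essential once $\partial N(K_i)$ is incompressible and $E(K_i) \not\cong T^2 \times I$ — these hypotheses guarantee no component is a trivial piece making the torus compressible or boundary-parallel), and the later tori are swallow follow tori for the appropriate components of the cut-open manifold directly by the induction hypothesis, which is precisely how Definition~\ref{dfn:sfts}(2) is phrased. The homeomorphism type of the complementary pieces is $E(K_i)^{(c_i)}$ by construction at each stage.

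The main obstacle I anticipate is the bookkeeping of the drilling parameters across the induction — making sure the "+1" picked up each time a swallow follow torus converts a connected-sum annulus into a pair of meridional annuli is tracked correctly, and handling the boundary cases where some $c_i = 0$ or where naively $c'$ would be negative, by choosing a good order (peeling off the $K_i$ with $c_i = 0$ first). The topological content — that the tube of an essential decomposing annulus is an essential torus, and that drilling a meridian is the same as identifying two meridional annuli — is routine given the hypotheses and has essentially been laid out in the discussion preceding the proposition, so I would treat it briskly and concentrate the written proof on the inductive structure and the arithmetic of the $c_i$.
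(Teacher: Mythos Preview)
Your proposal is correct and follows essentially the same inductive approach as the paper. The only difference is in handling the edge case: rather than your plan to peel off a $K_j$ with $c_j = 0$ when $c_n > c$, the paper observes directly (by a short counting argument) that some $c_i \leq c$ must exist, then reorders so that this index is $n$---a slightly cleaner bookkeeping device that avoids treating the edge case separately.
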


\begin{proof}
We use the notation as in the statement of the proposition and induct on $n$.
If $n=1$ there is nothing to prove.  We assume as we may that $n > 1$. 

We first claim that for some $i$ we have that $c_{i} \leq c$.  Assume, for a
contradiction, that $c_{i} > c$ for every $1 \leq i \leq n$.  Since $c_{i}$ and $c$
are integers, $c_{i} \geq c+1$.  Then we have:
$$c+n-1 = \sum_{i=1}^{n} c_{i} \geq n(c+1) = nc + n$$
Moving all term to the right we get that
$$0  \geq  (n-1)c + 1$$
which is absurd, since $n \geq 1$ and $c \geq 0$.
By reordering the indices if necessary we may assume that $c_{n} \leq c$.

Let $A$ be an annulus in $E(\#_{i=1}^{n} K_{i})$ so that the components of 
$E(\#_{i=1}^{n} K_{i})$ cut open along $A$
are identified with $E(K_1 \# \cdots \# K_{n-1})$ and $E(K_{n})$.
Since the tori $\partial N(K_{i})$ are incompressible, $A$ is essential in  $E(\#_{i=1}^{n} K_{i})$.
Recall that  $E(\#_{i=1}^{n} K_{i})^{(c)}$ is obtained from  $E(\#_{i=1}^{n} K_{i})$ by drilling $c$ curves that
are parallel to the meridian; since $c_{n} \leq c$ we may choose the curves
so that exactly $c_{n}$ components are contained in $E(K_{n})$.  
After drilling, the components of $E(\#_{i=1}^{n}K)^{(c)}$ cut open along  $A$ 
are identified with $E(K_1 \# \cdots \# K_{n-1})^{(c-c_{n})}$ and $E(K_{n})^{(c_{n})}$.
Let $T$ be the torus obtained by tubing $A$ into $E(K)^{(c_{n})}$; clearly
the components of  $E(\#_{i=1}^{n}K)^{(c)}$ cut open along  $T$ 
are identified with $E(K_1 \# \cdots \# K_{n-1})^{(c-c_{n}+1)}$ and $E(K_{n})^{(c_{n})}$.
Since $A$ is essential and $E(K_{i}) \not\cong T^{2} \times [0,1]$,
we have the $T$ is essential in $E(\#_{i=1}^{n}K_i)^{(c)}$.  By construction, 
there is an essential curve on $T$ that
cobounds an annulus with a meridian of  $E(\#_{i=1}^{n}K_i)^{(c)}$
and we conclude that $T$ is a swallow
follow torus.

We induct on $K_1,\dots,K_n$.  Let $c' = c-c_n+1$.  Then we have
$$\sum_{i=1}^{n-1} c_{i} = \sum_{i=1}^{n} c_{i} - c_{n} = c + n - 1 - c_{n} = (c - c_{n}+1) + n-2 = c' + (n-1) -1$$
By induction, $E(K_{1}\# \cdots \# K_{n-1})^{(c')}$
admits $n-2$ swallow follow tori, which we will denote by $\mathcal{T}'$,
so that $\mathcal{T}'$ decomposes 
$E(K_{1}\# \cdots \# K_{n-1})^{(c')} = E(K_{1}\# \cdots \# K_{n-1})^{(c - c_{n}+1)}$ as 
$$\cup_{\mathcal{T}'} E(K_{i})^{(c_{i})}$$
It follows that $\mathcal{T} = T \cup \mathcal{T}'$ are swallow follow tori for
$E(K)^{(c)}$, and the components of $E(K)^{(c)}$ cut open along
$\mathcal{T}$ are homeomorphic to 
$E(K_{1})^{(c_{1})},\dots,E(K_{n})^{(c_{n})}$.

\end{proof}

By Proposition~\ref{pro:sft} and repeated application of Lemma~\ref{lem:genus after amalgamation}
we obtain the following.

\begin{cor}
\label{cor:sft}
With notation as in the statement of Proposition~\ref{pro:sft} 
(and in  particular for any integer $c \geq 0$, any integers $c_1,\dots,c_n$
whose sum is $c+n-1$), we get:

$$g(E(K)^{(c)}) \leq \Sigma_{i=1}^n g(E(K_i)^{(c_i)}) - (n-1)$$
\end{cor}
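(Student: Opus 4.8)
The plan is to prove this exactly as announced in the text: feed the decomposition produced by Proposition~\ref{pro:sft} into the amalgamation formula of Lemma~\ref{lem:genus after amalgamation}. Concretely, I would first apply Proposition~\ref{pro:sft} to obtain $n-1$ swallow follow tori $\mathcal{T}$ cutting $E(K)^{(c)}$ into pieces homeomorphic to $E(K_1)^{(c_1)},\dots,E(K_n)^{(c_n)}$, and I would fix a minimal genus Heegaard surface $\s_i$ for each piece, so that $g(\s_i)=g(E(K_i)^{(c_i)})$. The dual graph of the decomposition along $\mathcal{T}$ --- one vertex per piece, one edge per torus of $\mathcal{T}$ --- is a tree; this is visible from the inductive construction inside the proof of Proposition~\ref{pro:sft}, in which each new swallow follow torus splits off a single fresh piece $E(K_i)^{(c_i)}$ by a single new edge.

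Next I would reassemble $E(K)^{(c)}$ by amalgamating the surfaces $\s_i$ across the tori of $\mathcal{T}$, handling one edge of the tree at a time; this is legitimate since the tori of $\mathcal{T}$ are separating and amalgamation is the converse of (iterated) weak reduction. At each step a single torus $T$ (connected, of genus $1$) joins the two Heegaard surfaces $\s'$, $\s''$ already built on its two sides into a Heegaard surface of genus $g(\s')+g(\s'')-g(T)=g(\s')+g(\s'')-1$, by Lemma~\ref{lem:genus after amalgamation}. After all $n-1$ edges have been processed we obtain a Heegaard surface $\s$ for $E(K)^{(c)}$ with $g(\s)=\sum_{i=1}^n g(\s_i)-(n-1)=\sum_{i=1}^n g(E(K_i)^{(c_i)})-(n-1)$, because each of the $n-1$ amalgamations contributed exactly one $-1$. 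Since $g(E(K)^{(c)})\le g(\s)$, the inequality follows. Alternatively, the same bookkeeping can be packaged as an induction on $n$ mirroring the induction in the proof of Proposition~\ref{pro:sft}: after reordering so that $c_n\le c$, split off $E(K_n)^{(c_n)}$ along one swallow follow torus, apply the inductive hypothesis to $E(\#_{i=1}^{n-1}K_i)^{(c-c_n+1)}$ (whose drilling parameters sum correctly), and perform a single amalgamation.

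I do not expect a genuine obstacle here: as the text indicates, the corollary is just ``Proposition~\ref{pro:sft} together with repeated application of Lemma~\ref{lem:genus after amalgamation}.'' The only points deserving a sentence of care are (i) that amalgamating along the whole family $\mathcal{T}$ is legitimate and can be carried out edge by edge along the tree, each step involving a single connected separating surface so that Lemma~\ref{lem:genus after amalgamation} applies verbatim; (ii) that the $n-1$ tori contribute a total of $-(n-1)$, which is immediate because each has genus $1$; and (iii) that the pieces $E(K_i)^{(c_i)}$ generally have nonempty boundary, which is harmless since Lemma~\ref{lem:genus after amalgamation} carries no closedness hypothesis.
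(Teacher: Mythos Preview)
Your proposal is correct and follows exactly the approach indicated in the paper: apply Proposition~\ref{pro:sft} to obtain the swallow follow tori decomposition, then amalgamate minimal genus Heegaard surfaces of the pieces one torus at a time using Lemma~\ref{lem:genus after amalgamation}. The paper gives no further detail than this, so your write-up is in fact more thorough than the original.
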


\section{An upper bound for the growth rate}
\label{sec:upper-bound}

Using the results in the previous sections 
we can easily bound the growth rate:

\begin{pro}
\label{pro:upper-bound}
Let $K$ be an admissible knot in a closed manifold $M$.
Let $g = g(E(K)) - g(M)$ and the bridge indices $\{b_1^*,\dots,b_g^* \}$ 
be as in Notation~\ref{notation:bridge indices} in the introduction.

Then 

$$\gr_t(K) \leq \min_{i=1, \dots, g} \bigg\{ 1 - \frac{i}{b_i^*} \bigg\} $$
\end{pro}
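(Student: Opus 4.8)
The plan is to fix an index $i \in \{1,\dots,g\}$ realizing a $(g(E(K))-i, b_i^*)$ decomposition of $K$ and show directly that $\gr_t(K) \leq 1 - i/b_i^*$; taking the minimum over $i$ then gives the proposition. The strategy is to estimate $g(E(nK))$ from above using the swallow follow tori of Section~\ref{sec:sft}. So first I would fix $n$ and choose integers $c_1,\dots,c_n \geq 0$ with $\sum c_j = n-1$ that are as large as possible subject to a constraint coming from the bridge decomposition, namely that each $c_j$ is at most $b_i^*$ (intuitively, drilling more than $b_i^*$ meridians out of $E(K)$ cannot help because the $(g(E(K))-i, b_i^*)$ decomposition already controls $g(E(K)^{(c)})$ for $c$ up to $b_i^*$). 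Concretely, I would take roughly $\lfloor (n-1)/b_i^* \rfloor$ of the $c_j$ equal to $b_i^*$ and the rest equal to $0$ (with one leftover term to absorb the remainder).

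Next I would bound $g(E(K)^{(c_j)})$ for the relevant values $c_j$. For $c_j = 0$ this is just $g(E(K))$. For $c_j = b_i^*$, Corollary~\ref{cor:upper-bound-for-X(b)} applied to the $(g(E(K))-i, b_i^*)$ decomposition (take $h - c = g(E(K)) - i$ and $c = b_i^*$, so $h = g(E(K)) - i + b_i^*$) gives $g(E(K)^{(b_i^*)}) \leq g(E(K)) + b_i^* - i$. Now plug these into Corollary~\ref{cor:sft}: with $m := \lfloor (n-1)/b_i^* \rfloor$ of the terms equal to $b_i^*$,
$$g(E(nK)) \leq m\big(g(E(K)) + b_i^* - i\big) + (n-m)g(E(K)) - (n-1) + O(1) = ng(E(K)) - (n-1) + m(b_i^* - i) + O(1),$$
where the $O(1)$ absorbs the one leftover drilling term. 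Hence
$$g(E(nK)) - ng(E(K)) + (n-1) \leq m(b_i^* - i) + O(1),$$
and dividing by $n-1$ and using $m/(n-1) \to 1/b_i^*$ as $n \to \infty$ yields $\gr_t(K) \leq (b_i^* - i)/b_i^* = 1 - i/b_i^*$.

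The routine obstacles are bookkeeping: handling the remainder $(n-1) \bmod b_i^*$ cleanly (one extra factor drilled $r$ times for $0 \leq r < b_i^*$, contributing a bounded error that washes out in the limsup), and checking that the hypotheses of Proposition~\ref{pro:sft} are met — here $M$ is closed and $K$ admissible, and one should note that $E(K) \not\cong T^2 \times I$ and $\partial N(K)$ is incompressible follow from admissibility (if $\partial N(K)$ compressed, $K$ would be inadmissible, and $T^2 \times I$ is likewise excluded). The one genuinely delicate point I expect to be the main issue is justifying that we only ever need $c_j \leq b_i^*$: we are not claiming $g(E(K)^{(c)})$ stops growing past $b_i^*$, only that using $c_j = b_i^*$ (rather than something larger) is what the bound $g(E(K)^{(b_i^*)}) \leq g(E(K)) + b_i^* - i$ lets us exploit, and the arithmetic above shows this choice already suffices — larger $c_j$ would only be needed for a matching lower bound, which is the content of Part~3, not of this proposition. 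So for the upper bound, the clean choice of the $c_j$ together with Corollaries~\ref{cor:upper-bound-for-X(b)} and~\ref{cor:sft} does all the work.
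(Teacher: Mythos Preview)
Your approach is correct and essentially identical to the paper's: fix $i$, write $n-1 = k_i b_i^* + r$, take $c_1 = \cdots = c_{k_i} = b_i^*$, $c_{k_i+1} = r$, and the rest zero, then apply Corollaries~\ref{cor:upper-bound-for-X(b)} and~\ref{cor:sft} exactly as you describe (the paper tracks the remainder term explicitly via $g(E(K)^{(r)}) \leq g(E(K)) + r$ rather than your $O(1)$, but this is cosmetic). One small caveat: your parenthetical claim that admissibility forces $\partial N(K)$ to be incompressible is not literally true (the unknot in $S^3$ is admissible yet $\partial N(K)$ compresses), though the paper does not verify this hypothesis either and such degenerate cases are easily handled separately.
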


\begin{proof}
Fix $1 \leq i \leq g$ and a positive integer $n$.  
Let $k_i>0$ and $0 \leq r < b_{i}^{*}$ be the quotient and remainder
when deviding $(n-1)$ by $b_{i}^{*}$; that is:
$$k_i b^{*}_i + r = n-1$$  
Consider the non-negative integers $b_i^*,\dots,b^*_i,r,0,\dots,0$  
(where $b_i^*$ appears $k_i$ times and the symbol $0$ appears $n-(k_i+1)$ times). 
Applying Corollary~\ref{cor:sft} to $E(nK)^{(0)}$  
we get (recall that $E(nK)^{(0)} = E(nK)$):
$$g(E(nK)) \leq k_i g(E(K)^{(b_i^*)}) + g(E(K)^{(r)}) + (n - (k_i + 1))g(E(K)) - (n-1)$$

By definition of $b_{i}^{*}$, $K$ admits a ($g(E(K))-i, b_i^*$) decomposition. Applying
Corollary~\ref{cor:upper-bound-for-X(b)} with $h - c = g(E(K)) - i$ and $c = b_i^*$ gives
$$g(E(K)^{(b_i^*)}) \leq g(E(K)) - i + b_i^*$$
Thus we get:

\begin{eqnarray*}
g(E(nK)) &\leq& k_i (g(E(K)) - i + b_i^*) + g(E(K)^{(r)}) + (n - (k_i + 1))g(E(K))  -
        (n-1) \\   &=& (n-1)g(E(K)) + g(E(K)^{(r)}) - k_i i + (k_i b_i^* - (n-1)) \\
        &=& (n-1)g(E(K)) + g(E(K)^{(r)}) - k_i i -r
\end{eqnarray*}

By denoting the $n$-th element of the sequence in the definition of the growth
rate by $S_n$, we get:

\begin{eqnarray*}
   S_n  &=& \frac{g(E(nK)) - n g(E(K)) + (n-1)}{n-1} \\  &\leq& \frac{1}{n-1}
        [(n-1)g(E(K)) + g(E(K)^{(r)}) - k_i i -r  - n g(E(K)) + (n-1)]\\   &=&
        \frac{1}{n-1} [g(E(K)^{(r)}) - g(E(K)) - r - k_i i + (n-1)] \\
       &=&  \frac{g(E(K)^{(r)}) - g(E(K)) - r}{n-1} + 1 - 
\frac{k_i i}{k_i b_i^* + r}    
\end{eqnarray*}
In the last equality we used $k_i b^{*}_i + r = n-1$.  Recall that 
$E(K)^{(r)}$ is obtained by drilling $r$ curve parallel to $\del E(K)$ out of
$E(K)$.  Therefore by \cite{rieck}, $g(E(K)^{(r)}) \leq g(E(K)) + r$.  Hence the first
summand above is non-positive, and we may remove that term. 
Furthermore since $r < b_i^*$,  $k_i b_i^* + r <  (k_i+1) b_i^* $, 
which implies 
\begin{equation}
\label{equ:UpperBound}
S_n < 1 - \frac{i}{b_i^*} \frac{k_i}{k_i+1}
\end{equation}
Since $\lim_{n \to \infty} k_i = \infty$ we have:
$$\gr_t(K) = \limsup_{n \to \infty} S_n \leq  
\lim_{k_i \to \infty} \biggl( 1 - \frac{i}{b_i^*} \frac{k_i}{k_i+1}\biggr) = 
1 - \frac{i}{b_i^*}$$ 
As $i$ was arbitrary, we get that
$$\gr_t(K) \leq \min_{i=1, \dots , g} \bigg\{1 - \frac{i}{b_i^*}\bigg\}$$ 
This completes the proof of Proposition~\ref{pro:upper-bound}.
\end{proof}

\part{The growth rate of m-small knots}
\label{part:growth-rate}

This part is devoted to calculating the growth rate of m-small knots,
completing the proof of
Theorem~\ref{thm:main}.  Section~\ref{sec:strongHH} contains the main technical
result of this paper, the Strong Hopf Haken Annulus Theorem
(Theorem~\ref{thm:strongHH}).  This result guarantees the existence of Hopf--Haken
annuli, and complements   Sections~\ref{sec:haken annuli-definitions} and ~\ref{sec:tfae}.  In
Section~\ref{sec:minimal-genus-sft} we prove  existence of  ``special''
swallow follow tori; this section complements Section~\ref{sec:sft}.  Finally,
in Section~\ref{sec:growth-rate} we calculate the growth rate of m-small knots
by  finding  a lower bound that equals exactly the upper bound found in
Section~\ref{sec:upper-bound}.

\section{The Strong Hopf-Haken Annulus Theorem}
\label{sec:strongHH}

Given a knot $K$ in a compact manifold $M$ and an integer $c > 0$; recall that
the exterior of $K$ is denoted by $E(K)$, the manifold obtained by drilling out $c$ curves simultaneously parallel
to the meridian of $E(K)$ is denoted by $E(K)^{(c)}$, 
and the components of $\del E(K)^{(c)} \setminus \del E(K)$
are denoted by $T_{1},\dots,T_{c}$.
Recall also the definitions of  Haken annuli for a given Heegaard splitting
(\ref{dfn:haken annulus}),
a complete system of Hopf annuli (\ref{dfn:hopf-annuli}), 
and a complete system of Hopf-Haken annuli for a given Heegaard splitting (\ref{dfn:hopf-haken-annuli}).

In this section we prove the Strong Hopf Haken Annulus Theorem 
(Theorem~\ref{thm:strongHH}), stated in the introduction. 
Before proving Theorem~\ref{thm:strongHH} we prove three of its main
corollaries:

\begin{cor}
\label{cor:strongHH}
Suppose that the assumptions of Theorem~\ref{thm:strongHH}
are satisfied with $F_1= F_2 = \emptyset$ and in addition, for each $i$, $E(K_{i})$ 
does not admit an essential meridional surface $S$ with 
$\chi(S) \geq \genbd[g(E(K)^{(c)})]$.  Let $h \geq 0$ be an integer.  Then $K$
admits an $(h-c, c)$ decomposition if and only if $g(E(K)^{(c)}) \leq h$.
\end{cor}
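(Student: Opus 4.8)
The plan is to establish the two implications of the biconditional separately, relying on the Strong Hopf--Haken Annulus Theorem (Theorem~\ref{thm:strongHH}) for the one substantive direction. The implication ``$K$ admits an $(h-c,c)$ decomposition $\implies g(E(K)^{(c)}) \leq h$'' requires no new argument: it is exactly Corollary~\ref{cor:upper-bound-for-X(b)}, which in turn is the implication $(1) \implies (2)$ of Theorem~\ref{thm:tfae}.

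For the converse I would argue as follows. Assume $g(E(K)^{(c)}) \leq h$. Apply Theorem~\ref{thm:strongHH} to $K = \#_{i=1}^{n}K_i$ with $F_1 = F_2 = \emptyset$; then the relative genus $g(E(K)^{(c)};\emptyset,\emptyset)$ is just the ordinary Heegaard genus $g(E(K)^{(c)})$, so the bound appearing in alternative~(2) is $6-2g(E(K)^{(c)})$. The extra hypothesis of the corollary says precisely that no $E(K_i)$ carries an essential meridional surface $S$ with $\chi(S) \geq 6-2g(E(K)^{(c)})$; hence alternative~(2) cannot occur, and alternative~(1) must hold: $E(K)^{(c)}$ admits a genus $g(E(K)^{(c)})$ Heegaard surface carrying a complete system of Hopf--Haken annuli. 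I would then feed this into the implication $(2) \implies (1)$ of Theorem~\ref{thm:tfae}, with the integer ``$h$'' of that theorem taken to be $g(E(K)^{(c)})$, obtaining a $(g(E(K)^{(c)})-c,\,c)$ decomposition of $K$. Finally, since $g(E(K)^{(c)}) \leq h$, I would stabilize the underlying genus $g(E(K)^{(c)})-c$ Heegaard surface of $M$ a total of $h-g(E(K)^{(c)})$ times, every stabilization performed in the complement of $K$; this leaves the bridge number of $K$ equal to $c$ while raising the Heegaard genus to $h$, yielding the desired $(h-c,c)$ decomposition.

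I do not anticipate a genuine obstacle, since the hard content --- manufacturing the Hopf--Haken annuli --- has already been isolated in Theorem~\ref{thm:strongHH}; the remaining steps are bookkeeping. The two points that will need a sentence of justification are: first, that the hypothesis of Theorem~\ref{thm:tfae} is met, namely that the meridian of $K$ does not bound a disk in $E(K)$ --- this follows from the standard observation that incompressibility of each $\partial N(K_i)$ in the irreducible manifold $E(K_i)$ forces $\partial N(K)$ to be incompressible in $E(K)$, and a meridian is an essential curve on $\partial N(K)$; and second, the elementary fact that a stabilization of a Heegaard surface realizing a bridge decomposition of $K$ may be performed disjointly from $K$, so that it increases the Heegaard genus by one without changing the bridge number. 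It is also worth noting, to be sure the numbers make sense, that the existence of a complete system of Hopf--Haken annuli for a genus $g(E(K)^{(c)})$ splitting forces the $c$ tori $T_1,\dots,T_c$ to lie on one side of the splitting, so that $g(E(K)^{(c)}) \geq c$ and all of $h-c$, $g(E(K)^{(c)})-c$, $h-g(E(K)^{(c)})$ are non-negative.
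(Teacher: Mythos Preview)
Your proposal is correct and follows essentially the same approach as the paper's proof: both directions are handled by combining Theorem~\ref{thm:strongHH} with Theorem~\ref{thm:tfae}, and the forward direction is exactly Corollary~\ref{cor:upper-bound-for-X(b)}. The only cosmetic difference is the placement of the stabilization: the paper (implicitly) stabilizes the minimal genus Heegaard surface of $E(K)^{(c)}$ up to genus $h$ while preserving the Hopf--Haken annuli and then invokes $(2)\Rightarrow(1)$ of Theorem~\ref{thm:tfae} directly at genus $h$, whereas you first obtain a $(g(E(K)^{(c)})-c,c)$ decomposition and then stabilize the Heegaard surface of $M$; both are equally valid, and your version is arguably more explicit about why the numbers $h-c$ and $g(E(K)^{(c)})-c$ are non-negative.
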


\begin{proof}[Proof of Corollary~\ref{cor:strongHH}]
Assume first that $K$ admits an $(h-c, c)$ decomposition.  Then by 
Corollary~\ref{cor:upper-bound-for-X(b)}, we have $g(E(K)^{(c)}) \leq h$.
Note that this direction holds in general
and does not require the assumption about meridional surfaces. 

Next assume that $g(E(K)^{(c)}) \leq h$ and let $\s \subset E(K)^{(c)}$ be 
a genus $h$ Heegaard surface.  
By the assumptions of the corollary, Conclusion~(2) of 
Theorem~\ref{thm:strongHH} does not hold.  Hence by that theorem
$E(K)^{(c)}$ admits a genus $h$ Heegaard surface 
that admits a complete system of Hopf-Haken annuli.  
By $(2) \Rightarrow (1)$ of Theorem~\ref{thm:tfae}, $K$ admits an $(h-c, c)$
decomposition.
\end{proof}

\begin{cor}
\label{cor:strongHH2}
We use the notation of Theorem~\ref{thm:strongHH}.
Suppose that the assumptions of Theorem~\ref{thm:strongHH} hold
and in addition, that each $K_i$ is m-small.
Then for any $c$ and any choice of $F_1$ and $F_2$, 
there is a minimal genus Heegaard splitting of $(E(\#_{i=1}^n K_i)^{(c)};F_1,F_2)$ 
that admits a complete system of Hopf-Haken annuli. 
\end{cor}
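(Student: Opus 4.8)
The plan is to read Corollary~\ref{cor:strongHH2} directly off Theorem~\ref{thm:strongHH}: the m-smallness hypothesis makes Conclusion~(2) of that theorem impossible, forcing Conclusion~(1).

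First I would check that Theorem~\ref{thm:strongHH} applies. Its standing hypotheses --- $E(K_i)\not\cong T^{2}\times I$, $E(K_i)$ irreducible, and $\del N(K_i)$ incompressible in $E(K_i)$ --- are assumed here, and we are free to take any $c\ge 0$ and any partition $F_1,F_2$. So the theorem yields, for each such choice, one of its two conclusions.

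Next I would rule out Conclusion~(2). That conclusion asserts that for some $1\le i\le n$ the exterior $E(K_i)$ admits an essential meridional surface $S$ with $\chi(S)\ge\genbd[g(E(\#_{i=1}^{n}K_{i})^{(c)}; F_1, F_2)]$. But by hypothesis each $K_i$ is m-small, so by definition (Section~\ref{sec:background}) $E(K_i)$ contains \emph{no} essential meridional surface at all, i.e.\ no essential surface with non-empty boundary all of whose boundary components are meridians of $K_i$. Hence no such $S$ exists --- for any value of the Euler-characteristic bound --- and Conclusion~(2) fails for every $c$, $F_1$, $F_2$. Therefore Conclusion~(1) of Theorem~\ref{thm:strongHH} holds, which says precisely that $(E(\#_{i=1}^{n}K_{i})^{(c)};F_1,F_2)$ has a minimal genus Heegaard surface admitting a complete system of Hopf--Haken annuli --- exactly the assertion of the corollary.

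The real content is all in Theorem~\ref{thm:strongHH}, so I do not expect a genuine obstacle in this deduction; the only point worth flagging is that m-smallness is strictly stronger than the negation of Conclusion~(2), since it eliminates \emph{all} essential meridional surfaces and not merely those of large Euler characteristic. In particular the collapse to Conclusion~(1) happens regardless of the value of the relative Heegaard genus $g(E(\#_{i=1}^{n}K_{i})^{(c)};F_1,F_2)$.
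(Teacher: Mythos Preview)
Your argument is correct and is exactly the paper's approach: the proof there reads ``This follows immediately from Theorem~\ref{thm:strongHH},'' and you have simply spelled out why m-smallness kills Conclusion~(2) and hence forces Conclusion~(1).
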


\begin{proof}[Proof of Corollary~\ref{cor:strongHH2}]
This follows immediately from Theorem~\ref{thm:strongHH}.
\end{proof}

Next we prove Corollary~\ref{cor:GenusOfXc} which was stated in the introduction:

\begin{proof}[Proof of Corollary~\ref{cor:GenusOfXc}]
We fix the notation in the statement of the corollary.
First we show that for any knot $K$ (not necessarily the connected sum of m-small
knots) if $c \geq b_{g}^{*}$, then 
the inequality $g(E(K)^{(c)}) \leq c$ holds:
by definition of $b_{g}^{*}$, $K$ admits a $(0,b_{g}^{*})$ decomposition 
(recall that 
$K \subset S^{3}$ and hence $b_{g}^*$ is the bridge index of $K$ with respect to $S^2$).
Thus for $c \geq b_{g}^{*}$, $K$ admits a $(0,c)$ decomposition.  By viewing this as a $(c-c,c)$
decomposition, Corollary~\ref{cor:upper-bound-for-X(b)} 
implies that $g(E(K)^{(c)}) \leq c$.

Next we note that  the inequality $g(E(K)^{(c)}) \geq c$ holds for
$K$ that is  a connected sum of  m-small knots, 
and any $c \geq 0$: by Corollary~\ref{cor:strongHH2},
$E(K)^{(c)}$ admits a minimal genus Heegaard surface (say 
$\s$) admitting a complete system of Hopf--Haken annuli.  Hence the $c$ tori, 
$T_1, \dots, T_c$,  
are on the same side of $\s$, 
which implies $g(\s) \geq c$;  hence $g(E(K)^{(c)}) = g(\s) \geq c$.

\end{proof}

\begin{proof}[Proof of Theorem~\ref{thm:strongHH}]
We first fix the notation that will be used in the proof (in addition to the notation in the statement of the theorem).
Let $K$ denote $\#_{i=1}^n K_i$.
For $c > 0$, $E(K)^{(c)}$ admits an essential torus $T$ that decomposes
$E(K)^{(c)}$ as:
$$E(K)^{(c)} = X \cup_T Q^{(c)},$$
where $X \cong E(K)$ and $Q^{(c)} \cong \mbox{ (annulus with }c \mbox{ holes)}\times S^1$.
Note that $Q^{(c)}$ fibers over $S^1$ in a unique way, and the fibers in $T$
are meridian curves in $X \cap Q^{(c)}$.   Since $Q^{(c)}$
is Seifert fibered it is contained in a unique component $J$ of the characteristic
submanifold~\cite{jaco}, \cite{jaco-shalen}, \cite{Johannson}. 
Since $\del N(K_i)$ is incompressible in $E(K_i)$, 
using Miyazaki's result~\cite{miyazaki}
it was shown in~\cite[Claim~1]{kobayashi-rieck-m-small}
that $K$  admits a unique prime decomposition.  Therefore the
number of prime factors of $K$ is well-defined.
We suppose as we may that each knot $K_i$ is prime; consequently,
the integer $n$ appearing in the statement of the theorem is the number of prime factors of $K$.

\begin{figure}[htbp]
        
\caption{}
\end{figure}

\medskip\noindent
{\bf The structure of the Proof.}  The proof is an induction on $(n,c)$ ordered
lexicographically.  We begin with two preliminary special cases.   In Case One we consider 
strongly irreducible Heegaard splittings.  In Case Two we consider
weakly reducible Heegaard splittings so that no component of the essential 
surface obtained by untelescoping is contained in $J$.   
In both cases we prove the theorem directly and
without reference to the complexity $(n,c)$.
We then proceed to the inductive step assuming the theorem for 
$(n',c') < (n,c)$
in the lexicographic order.  By Cases One and Two we may assume that 
a minimal genus Heegaard surface for $E(K)^{(c)}$ is weakly reducible and some
component of the essential surface obtained by untelescoping
it is contained in $J$; this component allows us to induct.  

\bigskip
\noindent 
{\bf Case One. $(E(K)^{(c)}; F_1, F_2)$ admits a strongly irreducible 
minimal genus Heegaard splitting.}    
Let $C_1 \cup_\s C_2$  be a minimal genus strongly
irreducible Heegaard splitting of $(E(K)^{(c)}; F_1, F_2)$.
The Swallow Follow Torus Theorem
\cite[Theorem~4.1]{kobayashi-rieck-m-small} implies that f $n>1$,
either $\s$ weakly reduces to a swallow follow torus (which contradicts the
assumption of Case One)  or Conclusion 2 of Theorem~\ref{thm:strongHH}
holds.  We assume as we may that $n = 1$ in the remainder of the proof of Case One.

Recall the notation $E(K)^{(c)} = X \cup_{T} Q^{(c)}$.
Since $T \subset E(K)^{(c)}$
is essential and
$\s \subset E(K)^{(c)}$ is strongly irreducible, we may isotope $\s$ so that $\s \cap T$
is transverse and every curve of $\s \cap T$ is essential in $T$.
Minimize $|\s \cap T|$ subject to this constraint.  If $\s \cap T = \emptyset$
then $T$ is contained in a compression body $C_{1}$ or $C_{2}$, 
and hence $T$ is parallel to a component of $\partial_- C_{1}$ or $\del_{-}C_{2}$.  
But then $T$ is parallel to a component of 
$\del E(K)^{(c)}$, a contradiction. 
Thus $\s \cap T \neq \emptyset$.  

Let $F$ be a component of $\s$ cut open along $T$.  
Minimality of $|\s \cap T|$ implies that $F$ is not boundary parallel.
Then $\partial F \subset T$; since $T$ is a torus, 
boundary compression of $F$ implies compression into the same side; this will be used extensively
below.  A surface in a Seifert fibered manifold is called {\it vertical}
if it is everywhere tangent to the fibers and {\it horizontal} if it is everywhere transverse to the fibers
(see, for example,~\cite{jaco} for a discussion).  We first reduce Theorem~\ref{thm:strongHH}
as follows:

\begin{ass}
One of the following holds:
\begin{enumerate}
\item $\s \cap X$ is connected and compresses into both sides, and 
$\s \cap Q^{(c)}$ is a collection of essential vertical annuli.
\item Theorem~\ref{thm:strongHH} holds.
\end{enumerate}
\end{ass}

\begin{proof}
A standard argument shows that one component of $\s$ cut open along $T$
compresses into both sides (in $X$ or $Q^{(c)}$) and all other components 
are essential (in $X$ or $Q^{(c)}$); for the convenience of
the reader we sketch it here:
let $D_{1}$ be a compressing disk for $C_{1}$.  After minimizing $|D_{1} \cap T|$
either $D_{1} \cap T = \emptyset$ (and hence some component of $\s$ cut open along $T$
compresses into $C_{1}$) or an outermost disk of $D_{1}$ provides a
boundary compression for some component of $\s$ cut open along $T$; since
boundary compression implies compression into the same side, we see that 
in this case too some component of $\s$ cut open along $T$ compresses into
$C_{1}$.  Similarly, some component of $\s$ cut open along $T$
 compresses into $C_{2}$.  Strong irreducibility
of $\s$ implies that the same component compresses into both sides and all other components are
incompressible and boundary incompressible.  
Minimality of $|\s \cap T|$ implies that no
component is boundary parallel, 
and hence the incompressible and boundary incompressible components are essential.

The proof of Assertion~1 breaks up into three subcases:

\bigskip
\noindent{\bf Subcase 1: no component of $\s \cap X$ is essential.} 
Then $\s \cap X$ is connected and compresses into both sides, and therefore $\s \cap Q^{(c)}$
consists of essential surfaces.  Since $Q^{(c)}$ is Seifert fibered,
every component of $\s \cap Q^{(c)}$ is either horizontal of vertical
(see, for example,~\cite[VI.34]{jaco}).
Any horizontal surface in $Q^{(c)}$ must meet every component of $\del Q^{(c)}$; by
construction $\s \cap \del N(K) = \emptyset$; thus every component of
$\s \cap Q^{(c)}$ is vertical (we will use this argument below without reference).
This gives Conclusion~(1) of the assertion.

\bigskip
\noindent{\bf Subcase 2.a: some component of $\s \cap X$ is essential
and some component of $\s \cap Q^{(c)}$ is essential.}
Let $F$ denote an essential component of $\s \cap X$.  
Since $T$ is incompressible and the components of $\s \cap T$ are essential in $T$,
no component of $\s$ cut open along $T$ is a disk; hence $\chi(F) \ge \chi(\s)$.  
Let $S$ denote an essential component of $\s \cap Q^{(c)}$.  
Then $S$ is a vertical annulus.
In particular, $S \cap T$ consists of fibers in the Seifert fiberation of 
$Q^{(c)}$.  By construction, the fibers on $T$ are meridians of $X$.  We see that
$F$ is meridional, giving Conclusion~(2) of Theorem~\ref{thm:strongHH}.

\bigskip
\noindent{\bf Subcase 2.b: some component of $\s \cap X$ is essential
and no component of $\s \cap Q^{(c)}$ is essential.}
As above let $F$ be an essential component of $\s \cap X$.
By assumption,   no component of $\s \cap Q^{(c)}$
is essential.  Hence
$\s \cap Q^{(c)}$ is connected and compresses into both sides.  
Let $\Delta_{1}$ be a maximal collection of compressing disks for $\s \cap Q^{(c)}$ into $Q^{(c)} \cap C_{1}$
and $S_{1}$ the surface obtained by compressing $S$ along $\Delta_{1}$.
Since $\Delta_1 \neq \emptyset$, maximality of $\Delta_{1}$ and the no nesting lemma~\cite{no-nesting} 
imply that $S_{1}$ is incompressible.  
Suppose first that some non-closed component of $S_{1}$, say $S_{1}'$,
is not boundary parallel (this is similar to Subcase~2.a).
Then $S_{1}'$ is an essential and hence vertical annulus
and we see that $F$ is meridional, giving
Conclusion~(2) of Theorem~\ref{thm:strongHH} and the assertion follows. 
We assume from now on that
$S_{1}$ consists of boundary parallel annuli and, perhaps, closed boundary parallel surfaces and 
ball-bounding spheres.  Furthermore, we see that:
\begin{enumerate}
\item  No two closed components of $S_1$
are parallel to the same component of $\del Q^{(c)}$: this follows from connectivity of $\s \cap Q^{(c)}$
and strong irreducibility of $\s$.
\item No two boundary parallel annuli of $S_1$ are nested: otherwise, it follows from connectivity of $\s \cap Q^{(c)}$
and strong irreducibility of $\s$ that $\s$ can be isotoped out of $Q^{(c)}$; for more details 
see~\cite[Page~249]{kobayashi-rieck-local-det}.  
\end{enumerate}
We assume as we may that the analogous conditions hold after compressing 
$\s \cap Q^{(c)}$ into $Q^{(c)} \cap C_{2}$.
Hence $\s \cap Q^{(c)}$ is a Haagaard surface 
for $Q^{(c)}$ relative to the annuli $\{C_{1} \cap T, C_{2} \cap T\}$
(relative Heegaard surfaces were defined in~\ref{def:RalitiveSplitting}).
We may replace $\s \cap Q^{(c)}$ with the minimal
genus relative Heegaard surface for $Q^{(c)}$ relative to  
$\{C_{1} \cap T$, $C_{2} \cap T\}$ given in Corollary~\ref{cor:RelHSforQc}.  
By pasting this surface to $\s \cap X$
we obtain a closed surface, say $\s'$, fulfilling for following conditions:
\begin{enumerate}
\item  $\s'$ is a Heegaard surface for $E(K)^{(c)}$: 
the components of $X$ cut open along $\s \cap X$ are the same as the components of
$C_1$ and $C_2$ cut open along   $\{C_{1} \cap T, C_{2} \cap T\}$.
Since $T$ is essential, the annuli  $C_{i} \cap T$  are incompressible in $C_i$.
It is well known that cutting a compression body along incompressible
surfaces yields compression bodies; we conclude that
the components of $X$ cut open along $\s \cap X$ are compression bodies.
By definition of relative Heegaard surface, the annuli of
 $\{C_{1} \cap T, C_{2} \cap T\}$ are primitive in the compression bodies obtained by cutting $Q^{(c)}$ open 
 along any relative Heegaard surface; it follows that $E(K)^{(c)}$ cut open alone $\s'$ consists of two compression bodies.
\item $\s'$ is a Heegaard surface for  $(E(K)^{(c)}; F_1, F_2)$:  in addition to~(1) above, we need to show that
 $\s'$ respects the same partition of $\partial E(K)^{(c)} \setminus (\partial N(K),T_{1},\dots,T_{c})$ as $\s$.
This follows immediately from the facts that the changes we made are contained in $Q^{(c)}$, 
every component of $F_1$ is contained in $C_1 \cap X$, and 
every component of $F_2$ is contained in $C_2 \cap X$.
Note that~(1) and~(2) hold for any relative Heegaard surface for $Q^{(c)}$ relative to  
$\{C_{1} \cap T$, $C_{2} \cap T\}$.  
\item $g(\s') = g(\s)$:  minimality of the genus of the relative Heegaard splitting used implies that $g(\s') \leq g(\s)$;
as $\s$ was a minimal genus Heegaard surface for  $(E(K)^{(c)}; F_1, F_2)$, 
$g(\s') = g(\s)$.
Note that~(3) hold for any minimal genus relative Heegaard surface  for $Q^{(c)}$ relative to  
$\{C_{1} \cap T$, $C_{2} \cap T\}$.  
\item $\s'$ admits a completes system of Hopf-Haken annuli:
By Figure~\ref{fig:RelHS} we see directly $\s'$ admits a complete system of Hopf Haken annuli.  
\end{enumerate}
\begin{rmkk}
As noted, in the construction above~(3) holds for any minimal genus relative Heegaard surface.
This is quite different in~(4), when considering Hopf-Haken annuli: it is not hard to construct 
relative Heegaard surfaces that result in a minimal genus Heegaard surface for 
  $(E(K)^{(c)}; F_1, F_2)$ so that all the tori $T_1.\dots,T_c$ are in the compression
  body containing $\del N(K)$, and hence cannot admit even one Hopf-Haken annulus.
  This shows that in the course of the proof of Theorem~\ref{thm:strongHH}
   the given Heegaard surface must be replaced.
\end{rmkk}

The Heegaard surface
$\s'$ fulfils the conditions of Conclusion~(1) of Theorem~\ref{thm:strongHH}. 
This completes that proof of Assertion~1.
\end{proof}

\bigskip
\noindent
Before proceeding we fix the following notation and conventions:
denote $\s \cap X$ by $\s_X$.  By Assertion~1 we may assume that 
$\s_{X}$ is connected and compresses into both sides
and every component of $\s \cap Q^{(c)}$ is an essential vertical annulus.  
Note that $X$ cut open along $\s_X$ consists of exactly two components, denoted
by $C_{i,X}$, where $C_{i,X} = C_i \cap X$  ($i=1,2$).   Denote the collection
of annuli $T \cap C_{i,X}$ by  $\mathcal{A}_i$, and the annuli in
$\mathcal{A}_i$ by $A_{i,1},\dots,A_{i,b}$, where $b$ denotes the number of
annuli in $\mathcal{A}_i$.
We assume from now on that Conclusion~(2) of Theorem~\ref{thm:strongHH} does not hold.

\begin{ass}
The number $b$ fulfils $c \leq b \leq g(\s)$.
\end{ass}

\begin{proof}
Assume for a contradiction that $b < c$.  Since $\s \cap Q^{(c)}$ consists of $b$ annuli,
$Q^{(c)}$ cut open along $\s \cap Q^{(c)}$ consists of $b+1 < c+1$
components.  Hence some component of $Q^{(c)}$ cut open along $\s \cap Q^{(c)}$ 
contains two of the components of $\partial Q^{(c)} \setminus T$.
Hence there is a vatical annulus connecting these components which is
disjoint from $\s$.  Since this annulus is disjoint from $\s$ it is contained in
a compression body $C_i$ and connects two components of $\partial_- C_i$,
which is impossible.  

Since $\s_X$ is obtained by removing the $b$ annuli $\s \cap Q^{(c)}$ and is
connected, $b \leq g(\s)$.

This completes the proof of Assertion~2.
\end{proof}

\begin{ass}
The  surface  
$\s_X$ defines a $(g(\s )-b, b)$ decomposition of $K$. 
\end{ass}

\begin{proof}
For $i=1,2$, let $\Delta_{i}$ be a maximal collection of compressing disks
for $\s_{X}$ into $C_{i,X}$; by assumption, $\Delta_{i} \neq \emptyset$.
Let $S_{i}$ be the surface obtained by compressing $\s_{X}$ along $\Delta_{i}$.
By maximality and the no nesting lemma~\cite{no-nesting} $S_{i}$ is incompressible.
Since the components of $\s \cap Q^{(c)}$ are vertical annuli, the boundary
components of $S_{i}$ are meridians.  Hence, if some non-closed component of $S_{i}$
is essential, we obtain Conclusion~(2) of Theorem~\ref{thm:strongHH},
contradicting our assumption.
Thus $S_{i}$ consists of boundary parallel annuli and, perhaps,
closed boundary parallel surfaces and ball-bounding spheres.
As above, strong irreducibility of $\s$ and connectivity of $\s_X$
imply that these annuli are not nested.
We see that $C_{i,X}$ is a compression body
and $T \cap C_{i,X}$ consists of $b$ mutually primitive annuli.
In fact, we see that $\s_{X}$ is a relative Heegaard surface.
By the argument of Claim~\ref{clm:ObtainingBridgeDecomposition},
on Page~\pageref{clm:ObtainingBridgeDecomposition},
$\s_X$ gives a $(g(\s) - b, b)$ decomposition.
\end{proof}

By Assertion~3 and Theorem~\ref{thm:tfae},
$E(K)^{(b)}$ admits a genus $g(\s)$ Heegaard surface admitting a 
complete system of Hopf-Haken annuli, say $\s'$.  
By Assertion~2, $c \leq b$.  Hence $E(K)^{(c)}$ is obtained from $E(K)^{(b)}$ 
by filling the tori $T_{c+1},\dots,T_b$.  Clearly, $\s'$ is a Heegaard surface for
$E(K)^{(c)}$, admitting a complete system of Hopf-Haken annuli.
This completes the proof of Theorem~\ref{thm:strongHH} in Case One.

\bigskip
\noindent
Before proceeding to Case Two we introduce notation that will be used in that 
case.  
Recall that since $Q^{(c)}$ is Seifert fibered, it is contained in a component of the 
characteristic submanifold  of $E(K)^{(c)}$ denoted by $J$.
Since $X \cong E(K)$ and $K = \#_{i=1}^{n} K_{i}$, $X$ admits $n-1$
decomposing annuli which we will denote by
$A_1,\dots,A_{n-1}$ ($A_{1},\dots,A_{n-1}$ are not uniquely defined).
The components of $X$ cut open along $\cup_{i=1}^{n-1} A_{i}$
are homeomorphic to $E(K_{1}),\dots,E(K_{n})$.  Let $V = Q^{(c)} \cup
N(A_1) \cup \cdots \cup N(A_{n-1})$.  Then $V$ is Seifert fibered and contains
$Q^{(c)}$, and hence after isotopy $V \subset J$.  Note that $V \cap \mbox{cl}(E(K)^{(c)} \setminus V)$
consists of $n$ tori, say $T_{1}',\dots,T_{n}'$.
Finally note that $X^{(c)}$ cut open along $\cup_{i=1}^{n} {T_i}'$ consists of
$n+1$ components, one is $V$, and the remaining homeomorphic
to $E(K_{1}),\dots,E(K_{n})$.  We denote the component 
that corresponds to $E(K_i)$ by $X_{i}$.  After renumbering if
necessary we may assume that $T_{i}'$ is a component of $\del X_{i}$.   By construction 
$T_{i}'$ corresponds to $\del N(K_{i})$.

The proof of the next assertion is a simple argument  using essential arcs in base orbifolds, 
and we leave it to the reader.

\begin{ass}
If $V$ is not isotopic to $J$ then some $E(K_i)$ 
contains a meridional essential annulus.
\end{ass}

For future reference we remark:

\begin{rmkk}
\label{rmk:aboutJ}
By Assertion~4, either we have conclusion 2  of Theorem~\ref{thm:strongHH}, 
or $J=V$. 
Hence, in the following,  we may assume that $J=V$; we will use the notation
$J$ from here on.  By construction, $J$ is homeomorphic to 
($(c+n)$-times punctured disk)$\times S^1$ and hence admits no closed non-separating surfaces.
\end{rmkk}

\medskip
\noindent 
{\bf Case Two. $(E(K)^{(c)};  F_1, F_2)$ admits a weakly reducible minimal genus Heegaard
  surface $\s$, and no component of the essential surface obtained 
  by  untelescoping $\s$ is isotopic into $J$.}

\bigskip
Let $F$ be the (not necessarily connected) essential surface obtained by untelescoping $\s$.
The assumptions of Theorem~\ref{thm:strongHH} imply
that $E(K)^{(c)}$ does not admit a nonseparating  sphere; hence
the Euler characteristic of every component of $F$ is bounded below by
$\chi(\s) + 4$.  After an isotopy that minimizes $|F \cap \del J|$, 
every component of $F \cap J$ is essential in $J$ and every component of 
$F \cap \mbox{cl}(E(K)^{(c)}  \setminus J)$ is essential in $\mbox{cl}(E(K)^{(c)}  \setminus J)$.  
By the assumption of Case Two, 
if some component $F'$ of $F$ meets $J$, then $F' \not\subset J$ and hence
each component of $F' \cap J$ is a vertical annulus and each component of $F'
\cap  \mbox{cl}(E(K)^{(c)} \setminus J)$, say $S$, is a meridional essential surface with $\chi(S) \geq \chi(F'
\cap E(K)^{(c)} ) = \chi(F') \geq \chi(F) \geq \genbd[g(\s)]$, giving Conclusion~2
of Theorem~\ref{thm:strongHH}.  Thus we may assume $F \cap J = \emptyset$.  

Let $M_J$ be the component of $E(K)^{(c)}$ cut open along $F$ containing $J$, 
and let $\s_J$ be the strongly irreducible Heegaard surface induced on $M_J$ by untelescoping.  
Then $\s_J$ defines a partition of $\del M_J \setminus (T_{1} \cup \cdots \cup T_{c} \cup \del N(K))$,
say $F_{J,1}$, $F_{J,2}$.
Since $\s$ is minimal genus, $\s_J$ is a minimal genus
splitting of $(M_J;F_{J,1},F_{J,2})$.

For $i=1,\dots,n$, denote $X_{i} \cap M_J$ by $X_{i}'$.  Note that $X_{i}' \cap J = T_{i}'$;
the meridian of $X_{i}$ 
defines a slope of $T_{i}'$, denoted by $\mu_{i}'$.  By filling $X_{i}'$ along $\mu_{i}'$
we obtain a manifold, say $M_{i}'$, and the core of the attached solid torus
is a knot, say $K_{i}' \subset M_{i}'$.  Then $M_{J}$ is naturally
identified with $E(\#_{i=1}^{n}K_{i}')^{(c)}$, and $\s_{J}$
is a strongly irreducible Heegaard surface for  $(E(\#_{i=1}^{n}K_{i}')^{(c)};F_{J,1},F_{J,2})$.
It is easy to see that $K_i'$ fulfill the assumptions of Theore~\ref{thm:strongHH};
in particular, the assumptions of Case Two imply that $E(K_i') \not\cong T^2 \times I$. 
Therefore, by Case One, one of the following holds:
	\begin{enumerate}
	\item  Conclusion~(2) of Theorem~\ref{thm:strongHH}: for some $i$, $X_{i}'$ admits a meridional essential surface $F_{i}'$ with
	$\chi(F_{i}') \geq {\genbd[g(\s_J )]} \geq {\genbd[g(\s )]}$.
	\item Conclusion~(1) of Theorem~\ref{thm:strongHH}: there exists a Heegaard surface $\s_{J}'$ for
	$M_{J}$ so that the following three conditions hold:
		\begin{enumerate}
		\item $g(\s_{J}') = g(\s_{J})$, 
		\item $\s_{J}'$ is a Heegaard splitting for $(E(\#_{i=1}^{n}K_{i}')^{(c)};F_{J,1},F_{J,2})$,
		\item $\s_{J}'$ admits a complete system of Hopf--Haken annuli.
		\end{enumerate}
	\end{enumerate}
Assume first that~(1) holds.  Since $X_{i}'$ is a component of $X_{i}$ cut open along
the (possibly empty) surface $F \cap X_i$, and every component of $F \cap X_i$ is incompressible,
we have that
$F_{i}'$ is essential in $X_{i}$.  By construction,
the meridians of $X_{i}$ and $X_{i}'$ are the same.  Finally, $\chi(F_{i}') \ge \genbd[g(\s )] 
=\genbd[g(E(K)^{(c)}; F_1, F_2)]$.  
This gives Conclusion~2 of Theorem~\ref{thm:strongHH}.

Assume next that~(2) happens.  
By condition~(2)(b), $\s_J'$ induces the same partition on the components of
$\partial M_j \setminus \{T_1,\dots,T_c,\partial N(K)\}$ as $\s_J$.
Thus we may amalgamate the Heegaard surfaces 
induced on the components of $\mbox{cl}(E(K)^{(c)} \setminus M_{J})$ with $\s_J'$, obtaining a 
Heegaard surface for $(E(K)^{(c)};F_1,F_2)$, say $\s''$. 
By Proposition~\ref{pro:haken annulus after amalgamation, general setting}, 
$\s''$ admits a complete system of 
Hopf--Haken annuli.  Since $g(\s_J') = g(\s_{J})$, we have that $g(\s'') = g(\s)$; hence $\s''$ is a minimal 
genus Heegaard surface for $(E(K)^{(c)};F_1,F_2)$.
This gives Conclusion~1 of Theorem~\ref{thm:strongHH}, completing 
the proof of Theorem~\ref{thm:strongHH} in Case Two.

\bigskip\noindent  
With these two preliminary cases in hand we are now ready for the inductive step.
For the remainder of the proof we assume that 
Conclusion~2 of Theorem~\ref{thm:strongHH} does not hold. 
Fix $K_1,\dots,K_n$ and $c \geq 0$ and assume, by induction, that
Theorem~\ref{thm:strongHH} holds for any example with complexity $(n',c') < (n,c)$
ordered lexicographically.
Let $\s$ be a minimal genus Heegaard surface for 
$E(\#_{i=1}^{n} K_{i})^{(c)}$.  By Case One, we may assume that
$\s$ is not strongly irreducible; hence $\s$ admits an untelescoping.  
By Case Two, we may assume that some component $F'$
of the essential surface $F$ obtained by untelescoping $\s$ is isotopic into $J$.
By Remark~\ref{rmk:aboutJ}, $J$ is a Seifert fibered space over a punctured disk 
and the components of $E(\# K_i)^{(c)} \setminus J$ are identified with $E(K_1),\dots,E(K_n)$. 
After isotopy we may assume that $F'$ is horizontal
or vertical (see, for example, \cite[VI.34]{jaco}; recall that a surface in a Seifert fibered space is 
horizontal  if it is everywhere transverse to the fibers and vertical if it is everywhere tangent to the fibers).
However $\del J \neq \emptyset$
and $\del F' = \emptyset$, and therefore $F'$ cannot be horizontal.  We conclude
that $F'$ is a vertical torus that separates $J$ and hence $E(\#_{i=1}^{n} K_{i})^{(c)}$.
Thus $F'$ decomposes $E(\#_{i=1}^n K_i)^{(c)}$ as:
$$E(\#_{i=1}^n K_i)^{(c)} = E(\#_{i \in I} K_i)^{(c_1)} \cup_{F'} E(\#_{i \not\in I} K_i)^{(c_2)},$$
where $c_1 + c_2 = c+1$ and $I \subset \{1,\dots,n\}$.  Since $F'$ is connected and separating, 
by Proposition~\ref{pro:untel to a conn sep surfce implies weak reduction}
$\s$ weakly reduces to $F'$ and the weak reduction induces (not necessarily
strongly irreducible) Heegaard splittings on 
$E(\#_{i \in I} K_i)^{(c_1)}$ and $E(\#_{i \not\in I} K_i)^{(c_2)}$. 
We divide the proof into Cases~1 and~2 below:

\bigskip
\noindent{\bf Case 1: $I = \emptyset$ or $I = \{1,\dots,n\}$.} 
By symmetry we may assume that $I = \{1,\dots,n\}$.  
Then $F'$ decomposes $E(\#_{i=1}^n K_i)^{(c)}$ 
as  $E(\#_{i=1}^n K_i)^{(c_1)} \cup_{F'} D(c_2)$ 
where $D(c_{2})$ is a $c_{2}$ times punctured disk cross $S^1$.
There are two possibilities:  $\del N(K) \subset E(\#_{i =1}^n K_{i})^{(c_{1})}$
(Subcases~1.a) and $\del N(K) \subset D(c_2)$ (Subcases~1.b).

\begin{figure}[h!]
\includegraphics[height=1.4in]{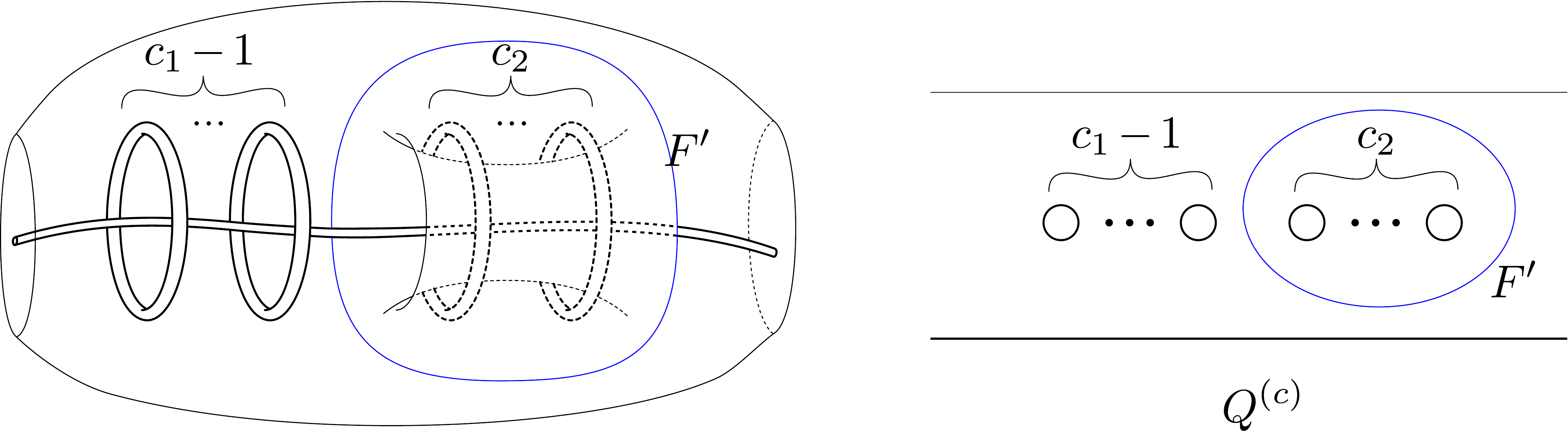}
\caption{Case 1.a}
\label{fig:Case1a}
\end{figure}

\bigskip
\noindent{\bf Subcase 1.a: $I = \{1,\dots,n\}$ and $\del N(K) \subset E(\#_{i = 1}^n K_{i})^{(c_1)}$.}  
For this subcase, see Figure~\ref{fig:Case1a}.  
Recall that $E(\#_{i=1}^n K_i)^{(c)} = E(\#_{i=1}^n K_i)^{(c_1)} \cup_{F'} D(c_2)$ with $c_1+c_2=c+1$;
by reordering $T_1,\dots,T_c$ if necessary we may assume that 
$T_1,\dots,T_{c_1-1} \subset \del E(\#_{i=1}^n K_i)^{(c_1)}$ and $T_{c_1},\dots,T_c \subset \del D(c_2)$.
Since $F'$ is not boundary parallel
$c_2 \geq 2$; thus $c_1 < c$.  
Thus $(n,c_1) < (n,c)$ (in the lexicographic order) and hence we may apply induction
to $E(\#_{i=1}^n K_i)^{(c_1)}$.  Let $\s_1'$ be the Heegaard surface induced on $E(\#_{i=1}^n K_i)^{(c_1)}$ by 
the weak reduction of $\s$.
By assumption Conclusion~2 of  Theorem~\ref{thm:strongHH}
does not hold; it is easy to see that $E(\#_{i=1}^n K_i)^{(c_1)}$ fulfills the assumptions of Theorem~\ref{thm:strongHH},
and since $g(\s_{1}') < g(\s)$, Conclusion~(2) does not hold for $E(\#_{i=1}^n K_i)^{(c_1)}$. 
Therefore the inductive hypothesis shows that $E(\#_{i=1}^n K_i)^{(c_1)}$
admits a Heegaard surface $\s_1$ fulfilling the following three conditions:
\begin{enumerate}
\item $g(\s_{1}) = g(\s_{1}')$;
\item $\s_{1}$ and $\s_{1}'$ induces the same partition of the components of

$\partial E(\#_{i = 1}^n K_{i})^{(c_1)} \setminus \{T_1,\dots,T_{c_1-1},F',\partial N(K)\}$;
\item $\s_{1}$ admits a complete system of Hopf--Haken annuli.
\end{enumerate}
Denote the union of the $c_1-1$ 
Hopf--Haken annuli connecting $\del N(\#_{i=1}^{n} K_{i})$ to $T_1,\dots,T_{c_1-1}$
by $\mathcal{A}_1$ and the Hopf--Haken annulus connecting $\del  N(\#_{i=1}^{n} K_{i})$ to $F'$ by $A$
(note that $c_1-1=0$ is possible; in that case $\mathcal{A}_1 = \emptyset$).  
There exists a minimal genus Heegaard surface $\s_2$ for $D(c_2)$ 
that admits $c_2$    Haken
annuli $A_{c_{1}},\dots,A_{c}$ so that one component of $\del A_i$ 
is a longitude of $T_i$ and the other is on $F'$ and parallel to $A \cap F'$ there
(recall Remark~\ref{rmk:HakenAannuliForDC}).  
We denote $\cup_{i=c_{1}}^{c} A_{i}$ by $\mathcal{A}_{2}$.
As shown in Proposition~\ref{pro:haken annulus after amalgamation, general setting},
the annuli obtained by attaching a parallel copy of $A$ to 
each annulus of $\mathcal{A}_2$ union $\mathcal{A}_1$ 
are Haken annuli
for the Heegaard surface obtained by amalgamating $\s_1$ and $\s_2$;
we will denote this surface by $\hat \s$.  By construction, these annuli form a 
complete system of Hopf-Haken annuli for $\hat \s$. 
Since $g(\hat \s) = g(\s_{1}) + g(\s_{2}) - 1$ and  $g(\s) = g(\s_{1}') + g(\s_{2}) - 1$,
by Condition~(1) above we have $g(\hat{\s})=g(\s)$.  By construction $\s$ and $\s'$ induce the
same partition of the components of $\partial E(K)^{(c)} \setminus \{T_{1},\dots,T_{c},\partial N(K)\}$.
Theorem~\ref{thm:strongHH} follows in Subcase~1.a.

\bigskip
\noindent{\bf Subcase 1.b: $I = \{1,\dots,n\}$ and $\del N(K) \subset D(c_{2})$.}  
\begin{figure}[h!]
\includegraphics[height=1.4in]{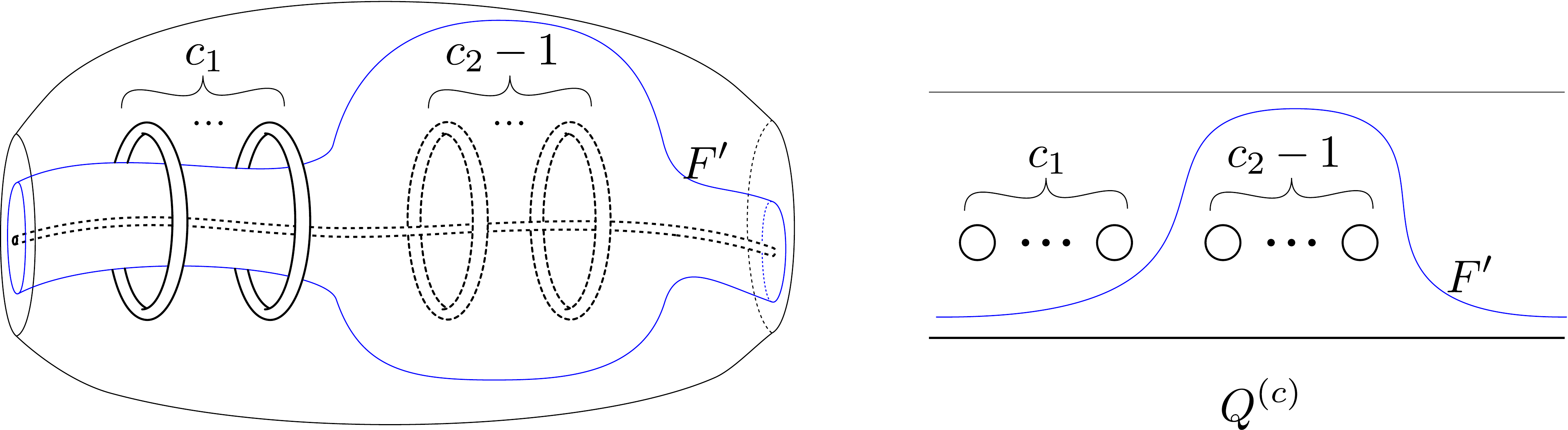}
\caption{Subcase 1.b}
\label{fig:Case1b}
\end{figure}
For this subcase see Figure~\ref{fig:Case1b}.  Since Subcase~1.b is similar to 
Subcase~1.a we omit some of the easier details of the proof.  
As in Subcase~1.a, $F'$ decomposes $E(\#_{i=1}^n K_i)^{(c)}$ 
as  $E(\#_{i=1}^n K_i)^{(c_1)} \cup_{F'} D(c_2)$ with $c_1+c_2=c+1$; 
we reorder $T_1,\dots,T_c$ so that $T_1,\dots,T_{c_1} \subset \del E(\#_{i=1}^n K_i)^{(c_1)}$
and $T_{c_1+1},\dots,T_c \subset \del D(c_2)$.  By induction there exists a minimal genus Heegaard 
surface $\s_{1}$ for $E(\#_{i=1}^n K_i)^{(c_1)}$ fulfilling conditions
analogous to~(1)--(3) listed in Subcase~1.a.
In particular, $\s_{1}$ admits a complete system of $c_1$ Hopf--Hakn annuli,
say $\mathcal{A}_1$, so that one boundary component of each annulus of $\mathcal{A}_{1}$
is a longitude of $T_{i}$ ($i=1,\dots,c_{1}$) and the other is a curve of $F'$.  As in Subcase~1.a, 
there exists a minimal genus Heegaard surface $\s_{2}$ for 
$D(c_2)$ admitting a system of $c_{2}$ Haken
annuli (recall Remark~\ref{rmk:HakenAannuliForDC}), 
denoted by $\mathcal{A}_{2} \cup A$,
so that $\mathcal{A}_{2}$ consists of $c_{2} - 1$ annuli connecting meridians of $\del N(\# K_{i})$ to
the longitudes of $T_{c_{1}+1},\dots,T_{c}$, and $A$ connects a meridian of $\del N(\# K_{i})$ to a
curve of $F'$; by construction, this curve is parallel to the curves of $\mathcal{A}_{1} \cap F'$.
As shown in Proposition~\ref{pro:haken annulus after amalgamation, general setting},
the annuli obtained by attaching a parallel copy of $A$ to 
each annulus of $\mathcal{A}_1$ union $\mathcal{A}_2$ 
are Haken annuli
for the Heegaard surface obtained by amalgamating $\s_1$ and $\s_2$;
we will denote this surface by $\hat \s$.  By construction, these annuli form a 
complete system of Hopf-Haken annuli for $\hat \s$. 
As in Case~1.a, $g(\hat{\s}) = g(\s)$ and $\hat \s$ induces the same partition 
on the components of  $\partial E(K)^{(c)} \setminus \{T_{1},\dots,T_{c},\partial N(K)\}$ as $\s$. 
Theorem~\ref{thm:strongHH} follows in Subcase~1.b.

\bigskip
\noindent{\bf Case 2: $\emptyset \neq I \not= \{1,\dots,n\}$.}  
See Figure~\ref{fig:Case2} for this case.
\begin{figure}[h!]
\includegraphics[height=1.4in]{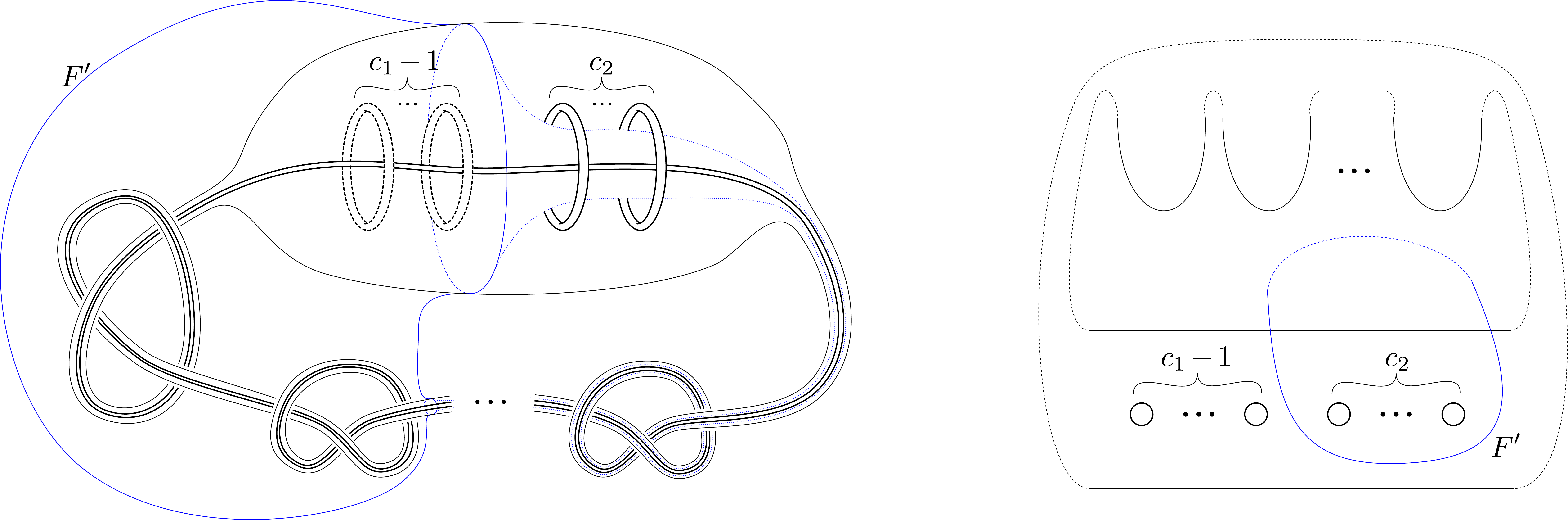}
\caption{Case 2}
\label{fig:Case2}
\end{figure}
Since Case~2 is similar to 
Subcase~1.a we omit some of the easier details of the proof.
By symmetry we may assume that $\del N(K) \subset \del E(\#_{i \in I} K_i)^{(c_{1})}$.   
Let $\s_{1}'$ and $\s_{2}'$ be the Heegaard surfaces induced on 
$E(\#_{i \in I} K_{i})^{(c_{1})}$ and $E(\#_{i \not\in I} K_{i})^{(c_{2})}$
(respectively) by $\s$.
Since both $|I|$ and
$n - |I|$ are strictly less than $n$, we may apply induction to both 
$E(\#_{i \in I} K_{i})^{(c_{1})}$ and $E(\#_{i \not\in I} K_{i})^{(c_{2})}$.
By induction, there exists a minimal genus Heegaard surfaces $\s_{1}$ and $\s_{2}$ for
$E(\#_{i \in I} K_{i})^{(c_{1})}$ and $E(\#_{i \not\in I} K_{i})^{(c_{2})}$
(respectively) fulfilling the following three conditions:
\begin{enumerate}
\item $g(\s_{1}) = g(\s_{1}')$ and $g(\s_{2}) = g(\s_{2}')$;
\item $\s_{1}$ induces the same partition of the components of \\
$\partial E(\#_{i \in I} K_{i})^{(c_{1})} \setminus \{\partial N(K), T_{1},\dots,T_{c_{1}-1}\}$  as $\s_{1}'$;
similarly, $\s_{2}$ induces the same partition of the components of
$\partial E(\#_{i \not\in I} K_{i})^{(c_{2})} \setminus \{T_{c_{1}},\dots,T_{c}\}$ as $\s_{2}'$.
\item $\s_{1}$ admits a complete system of Hopf-Haken annuli, say $A \cup \mathcal{A}_1$,
where $A$ connects $\partial N(K)$ to $F'$ and the components of $\mathcal{A}_1$
connect $\partial N(K)$ to $T_1,\dots,T_{c_1 - 1}$; similary 
$\s_{2}$ admit complete systems of Hopf--Haken annuli $\mathcal{A}_2$ whose
components connect $F'$ to $T_{c_1},\dots,T_{c}$.
\end{enumerate}
As shown in Proposition~\ref{pro:haken annulus after amalgamation, general setting},
the annuli obtained by attaching a parallel copy of $A$ to 
each annulus of $\mathcal{A}_2$ union $\mathcal{A}_1$ 
are Haken annuli
for the Heegaard surface obtained by amalgamating $\s_1$ and $\s_2$;
we will denote this surface by $\hat \s$.  By construction, these annuli form a 
complete system of Hopf-Haken annuli for $\hat \s$. 
As above $g(\hat{\s}) = g(\s)$ and $\hat \s$ induces the same partition of the components
of $\partial E(K)^{(c)} \setminus \{T_{1},\dots,T_{c},\partial N(K)\}$ as $\s$.
Theorem~\ref{thm:strongHH} follows in Case~2.

This completes the proof of Theorem~\ref{thm:strongHH}.
\end{proof}

\section{Weak reduction to swallow follow tori and calculating $g(E(K)^{(c)})$}
\label{sec:minimal-genus-sft}

Let $K_1\subset M_{1},\dots,K_n \subset M_{n}$ be knots in compact manifolds 
and $c>0$ an integer.  When convenient, we will denote $\#_{i=1}^{n} K_{i}$ by $K$.
Let $c_1, \dots, c_n \geq 0$ be integers such that $\sum_{i=1}^{n} c_i = c + n - 1$.   
By Proposition~\ref{pro:sft}  
there exist $n-1$ swallow follow tori $\mathcal{T} \subset E(K)^{(c)}$ that decompose
it as $E(K)^{(c)} = \cup_{\mathcal{T}} E(K_{i})^{(c_i)}$.   By amalgamating minimal
genus Heegaard surfaces for $E(K_{i})^{(c_i)}$ we obtain a Heegaard
surface for $E(K)^{(c)}$; however, it is distinctly possible that the surface
obtained is not of minimal genus.  This motivates the following definition:

\begin{dfn}[natural swallow follow tori]
Let $K_{1}\subset M_{1},\dots,K_{n} \subset M_{n}$
be prime knots in compact manifolds and $c \geq 0$ an integer.
Let $\mathcal{T} \subset E(\#_{i=1}^{n} K_{i})^{(c)}$ be a collection of $n-1$ 
swallow follow tori giving the decomposition 
$E(\#_{i=1}^{n} K_{i})^{(c)} = \cup_\mathcal{T} E(K_{i})^{(c_i)}$, for some integers $c_{i} \geq 0$.
We say that $\mathcal{T}$ is {\it natural} if it is
obtained from a minimal genus Heegaard surface for 
$E(\#_{i=1}^{n} K_{i})^{(c)}$ by iterated weak reduction;
equivalently, $\mathcal{T}$ is called natural if 
$$g(E(\#_{i=1}^{n} K_{i})^{(c)}) = \sum_{i=1}^n g(E(K_{i})^{(c_i)}) - (n-1)$$
\end{dfn}

\begin{rmk}
As explained in Section~\ref{sec:sft}, given {\it any} collection of $n-1$
swallow follow tori $\mathcal{T} \subset E(\#_{i=1}^{n} K_{i})^{(c)}$ that give the decomposition 
$E(\#_{i=1}^{n} K_{i})^{(c)} = \cup_\mathcal{T} E(K_{i})^{(c_i)}$, 
the integers $c_1,\dots,c_n$ satisfy the equation $\sum_{i=1}^{n} c_i = c+n-1$.
We will often use this fact without reference; compare this to Proposition~\ref{pro:sft}
where the converse was established.
\end{rmk}

\begin{example}[Knots with no natural swallow follow tori]
\label{ex:sft1}
In Theorem~\ref{thm:sft} below we prove existence of natural
swallow follow tori under certain assumptions.  
The following example shows that this is not always the case.
We first analyze basic properties of knots that admit natural swallow follow tori:
let $K_1, K_2 \subset S^3$ be prime knots and $T \subset E(K_1 \# K_2)$ a natural swallow
follow torus.  By exchanging the subscripts if necessary we may assume that $T$ decomposes
$E(K_{1} \# K_{2})$ as $E(K_{1})^{(1)} \cup_{T} E(K_{2})$. By definition of naturallity,
$$g(E(K_{1} \# K_{2})) =  g(E(K_{1})^{(1)}) + g(E(K_{2})) - 1$$  
It is easy to see that $g(E(K_{1})^{(1)}) \geq g(E(K_{1}))$.  Combining these,
we see that $g(E(K_{1} \# K_{2})) \geq  g(E(K_{1})) + g(E(K_{2})) - 1$.
Morimoto~\cite{morimoto-proc-ams} constructed examples of prime knots $K_{1}$, $K_{2}$
for which $g(E(K_{1} \# K_{2})) =  g(E(K_{1})) + g(E(K_{2})) - 2$.  We conclude that for these knots,
$E(K_{1} \# K_{2})$ does not admit a natural swallow follow torus.
\end{example}

\begin{example}[knots where only certain swallow follow tori are natural]
\label{ex:sft2}
The following example is of a more subtle phenomenon.  It shows 
that even when $E(K_{1} \# K_{2})$ does admit a natural
swallow follow torus, not every swallow follow torus
is natural.  In this sense, the weak reduction found in
Theorem~\ref{thm:sft} is special as it finds natural swallow follow tori.

Let $K_{MSY} \subset S^{3}$ be the knot constructed by Morimoto
Sakuma and Yokota in \cite{skuma-morimoto-yokata} and recall the notation
$2K_{MSY} = K_{MSY} \# K_{MSY}$.
It was shown in \cite{skuma-morimoto-yokata} that $g(E(K_{MSY})) = 2$ and
$g(E(2K_{MSY}))=4$.  

We claim that $g(E(K_{MSY})^{(1)})=3$.
By \cite{rieck}, either $g(E(K_{MSY})^{(1)})=2$ or
$g(E(K_{MSY})^{(1)})=3$.  Assume for a contradiction that $g(E(K_{MSY})^{(1)})=2$.
By Corollary~\ref{cor:sft} (with $c=0$, $c_1=1$, and $c_2=0$) we have
$$g(E(2K_{MSY})) \leq g(E(K_{MSY})^{(1)}) + g(E(K_{MSY})) - 1 = 2+2-1=3$$ 
a contradiction.  Hence $g(E(K_{MSY})^{(1)}) = 3$.

Let $K$ be any non-trivial 2-bridge knot.  
It is well known that $g(E(K))=2$.
We claim that $g(E(K_{MSY} \# K)) =3$.  
Since tunnel number one knots are prime~\cite{norwood},
$g(E(K_{MSY} \# K)) \geq 3$.  On the other hand, since $K$
admits a $(1, 1)$ decomposition, by Theorem~\ref{thm:tfae} we have that $g(E(K)^{(1)}) = 2$. 
As above, Corollary~\ref{cor:sft} gives
$$g(E(K_{MSY} \# K)) \leq g(E(K_{MSY})) + g(E(K)^{(1)}) - 1 = 2 + 2 - 1 = 3$$   
Hence $g(E(K_{MSY} \# K)) =3$.

$E(K_{MSY} \# K)$ admits two swallow follow tori, say $T_1$ and $T_2$,  
that decompose it as follows:
\begin{enumerate}
\item $g(E(K_{MSY} \# K))  = E(K_{MSY})^{(1)} \cup_{T_1} E(K)$, and
\item $g(E(K_{MSY} \# K)) = E(K_{MSY}) \cup_{T_2} E(K)^{(1)}$.
\end{enumerate}
In each case, amalgamating minimal genus Heegaard surfaces for the manifolds appearing on the
right hand side yields a Heegaard surface for $E(K_{MSY} \# K)$ whose genus fulfills (Lemma~\ref{lem:genus after amalgamation}):
\begin{enumerate}
\item $g(E(K_{MSY})^{(1)}) + g(E(K)) - g(T_1) = 3 + 2 - 1 = 4,$ and
\item $g(E(K_{MSY})) + g(E(K)^{(1)}) - g(T_2) = 2 + 2 - 1 = 3.$
\end{enumerate}
We conclude that $T_{2}$ is a natural swallow follow torus but $T_1$ is not.
\end{example}

\bigskip
\bigskip
\noindent In this section we show that if $K_i$ is
m-small for all $i$, then any minimal genus Heegaard surface for $E(\#_{i=1}^{n} K_{i})^{(c)}$ weakly reduces to a
natural collection of swallow follow tori.  The statement of Theorem~\ref{thm:sft} is more
general and allows for non-minimal genus Heegaard surfaces.

\begin{thm}
\label{thm:sft}
Let $K_{i} \subset M_{i}$ be prime knots in compact manifolds 
so that $E(K_i)$ not homeomorphic to 
$T^2 \times I$, $E(K_i)$ is irreducible, and $\del N(K_i)$ is incompressible in $E(K_i)$. 
Let $\s$ be a (not necessarily minimal genus) Heegaard surface for
$E(\#_{i=1}^{n} K_{i})^{(c)}$. Then one of the following holds:

\begin{enumerate}
\item $\s$ admits iterated weak reductions 
that yield a collection of $n-1$ swallow follow tori, say  $\mathcal{T}$, giving the decomposition 
$$E(\#_{i=1}^{n} K_{i})^{(c)} = \cup_\mathcal{T} E(K_{i})^{(c_i)}$$
where $c_{1},\dots,c_{n}$  are integers such that $\s_{i=1}^{n} c_i = c+n - 1$.  

\item For some $i$, $K_i$ admits an essential meridional surface $S$ with
  $\chi (S) \geq \genbd[g( \s )]$.  
\end{enumerate}
\end{thm}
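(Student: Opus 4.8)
The plan is to prove Theorem~\ref{thm:sft} by induction on the pair $(n,c)$, ordered lexicographically, running the argument in close parallel with the proof of Theorem~\ref{thm:strongHH}: the role played there by a complete system of Hopf--Haken annuli is played here by the swallow follow tori produced along the weak reductions. Write $K=\#_{i=1}^{n}K_i$ and recall, exactly as in that proof, the decomposition $E(K)^{(c)}=X\cup_{T}Q^{(c)}$ with $X\cong E(K)$, the Seifert fibered piece $J$ of the characteristic submanifold containing $Q^{(c)}$, and the fact (Remark~\ref{rmk:aboutJ}) that after isotopy $J\cong((c+n)\text{-punctured disk})\times S^1$, its fibers are meridians of $K$, and $E(K)^{(c)}$ cut along $J$ is $E(K_1)\sqcup\cdots\sqcup E(K_n)$. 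The geometric observation driving the induction is that a separating vertical torus in $J$ is automatically a swallow follow torus for $E(K)^{(c)}$, since it cobounds a vertical annulus with the meridional fiber on $\partial N(K)$. For the base case $n=1$ the trivial (empty) iterated weak reduction yields the empty family of swallow follow tori and the decomposition $E(K_1)^{(c)}=E(K_1)^{(c)}$, so alternative~(1) holds; assume from now on $n\ge 2$. We may also assume $\s$ is irreducible (inducting secondarily on genus, and using that stabilization does not affect the essential surfaces obtainable by iterated weak reduction, while the bound $\chi(S)\ge\genbd[g(\s)]$ only weakens as $g(\s)$ grows).

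Suppose first that $\s$ is strongly irreducible. Since $n\ge 2$, the Swallow Follow Torus Theorem \cite[Theorem~4.1]{kobayashi-rieck-m-small}, applied to $E(K)^{(c)}$ exactly as in the proof of Theorem~\ref{thm:strongHH}, yields either a weak reduction of $\s$ to a swallow follow torus --- impossible, as $\s$ is strongly irreducible --- or an essential meridional surface $S$ in some $E(K_i)$ with $\chi(S)\ge\genbd[g(\s)]$, which is alternative~(2). Suppose instead that $\s$ is weakly reducible. By Casson--Gordon $\s$ weakly reduces to a (possibly disconnected) essential surface $F$, every component of which has Euler characteristic at least $\genbd[g(\s)]$ (as in the proof of Theorem~\ref{thm:strongHH}). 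Isotope $F$ to minimize $|F\cap\partial J|$, so that $F\cap J$ and $F\cap\mbox{cl}(E(K)^{(c)}\setminus J)$ are essential in the respective pieces. If no component of $F$ is isotopic into $J$ but some component meets $J$, then, exactly as in Case Two of the proof of Theorem~\ref{thm:strongHH}, that component meets $J$ in vertical annuli and meets some $E(K_i)$ in an essential meridional surface of Euler characteristic at least $\genbd[g(\s)]$, giving alternative~(2). If $F\cap J=\emptyset$, let $M_J\supset J$ be the component of $E(K)^{(c)}$ cut along $F$; then $M_J\cong E(\#_{i=1}^{n}K_i')^{(c)}$ for suitable knots $K_i'$, it carries the strongly irreducible induced splitting $\s_J$, and the strongly irreducible case applied to $\s_J$ produces an essential meridional surface in some $E(K_i')$, hence in $E(K_i)$, with $\chi\ge\genbd[g(\s_J)]\ge\genbd[g(\s)]$ --- again alternative~(2).

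There remains the case that some component $F'$ of $F$ is isotopic into $J$; this is where the inductive hypothesis is used. As in the inductive step of the proof of Theorem~\ref{thm:strongHH}, $F'$ is a separating vertical torus, hence a swallow follow torus, and it decomposes $E(K)^{(c)}=E(\#_{i\in I}K_i)^{(c_1)}\cup_{F'}E(\#_{i\notin I}K_i)^{(c_2)}$ with $c_1+c_2=c+1$; moreover, by Proposition~\ref{pro:untel to a conn sep surfce implies weak reduction}, $\s$ weakly reduces to $F'$ alone, inducing splittings $\s_1,\s_2$ on the two sides with $g(\s)=g(\s_1)+g(\s_2)-1$, so $g(\s_j)\le g(\s)$. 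When $\emptyset\ne I\ne\{1,\dots,n\}$, both summands have complexity strictly below $(n,c)$, so the inductive hypothesis applies to $E(\#_{i\in I}K_i)^{(c_1)}$ with $\s_1$ and to $E(\#_{i\notin I}K_i)^{(c_2)}$ with $\s_2$: if either side falls under its alternative~(2) the resulting meridional surface has $\chi\ge\genbd[g(\s_j)]\ge\genbd[g(\s)]$ and we are done; otherwise each $\s_j$ admits an iterated weak reduction to swallow follow tori decomposing its side into the relevant pieces, and these two families together with $F'$ form an iterated weak reduction of $\s$ to $(|I|-1)+(n-|I|-1)+1=n-1$ swallow follow tori, with drilling indices summing to $(c_1+|I|-1)+(c_2+n-|I|-1)=c+n-1$. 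In the degenerate subcase $I=\emptyset$ or $I=\{1,\dots,n\}$, $F'$ cuts off a Seifert piece $D\cong(c_2\text{-punctured disk})\times S^1$ with $c_2\ge 2$ that contains no knot summand; here $(n,c_1)<(n,c)$, so applying induction to $E(\#_{i=1}^{n}K_i)^{(c_1)}$ with $\s_1$ (discarding $F'$ and $\s_2$) gives, unless alternative~(2) holds as before, an iterated weak reduction of $\s_1$ to $n-1$ swallow follow tori $\mathcal{T}_1$ decomposing $E(\#_{i=1}^{n}K_i)^{(c_1)}$ into $E(K_1)^{(d_1)},\dots,E(K_n)^{(d_n)}$; re-attaching $D$ along $F'$ absorbs it into whichever summand $E(K_{i_0})^{(d_{i_0})}$ meets $F'$, converting that summand into $E(K_{i_0})^{(d_{i_0}+c_2-1)}$, so $\mathcal{T}_1$ is already the desired family of $n-1$ swallow follow tori for $E(K)^{(c)}$, with drilling indices summing to $(c_1+n-1)+(c_2-1)=c+n-1$.

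I expect the main obstacle to be the bookkeeping in this last, degenerate subcase: verifying that the Seifert piece $D$ is genuinely absorbed into an adjacent knot summand --- so that the output consists of exactly $n-1$ swallow follow tori with summands exactly $E(K_1)^{(c_1)},\dots,E(K_n)^{(c_n)}$ and no spurious Seifert pieces --- that the tori of $\mathcal{T}_1$ remain swallow follow tori once $D$ is re-attached, and that $\mathcal{T}_1$ genuinely arises from an iterated weak reduction of $\s$ (weakly reduce $\s$ to $F'$, then continue only on the $\s_1$ side). A secondary point meriting a word of justification, as in the proof of Theorem~\ref{thm:strongHH}, is the applicability of \cite[Theorem~4.1]{kobayashi-rieck-m-small} to $E(K)^{(c)}$ for any $c\ge 0$ and to Heegaard surfaces not of minimal genus; everything else is a routine transcription of the proof of Theorem~\ref{thm:strongHH}, with ``admits a complete system of Hopf--Haken annuli'' replaced throughout by ``admits an iterated weak reduction to swallow follow tori.''
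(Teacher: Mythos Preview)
Your overall inductive scheme on $(n,c)$ is exactly right, and your treatment of the non-degenerate subcase $\emptyset\ne I\ne\{1,\dots,n\}$ matches the paper's Case~One. But two points deserve comment.

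First, most of your case analysis is unnecessary. The paper does not split into ``strongly irreducible'' versus ``weakly reducible'' and does not analyse $F\cap J$; instead it directly invokes the Swallow Follow Torus Theorem \cite[Theorem~4.1]{kobayashi-rieck-m-small}, which applies to \emph{any} Heegaard surface (not only strongly irreducible or minimal genus ones) and says: if $n\ge 2$ and no $E(K_i)$ contains an essential meridional surface $S$ with $\chi(S)\ge\genbd[g(\s)]$, then $\s$ weakly reduces to a swallow follow torus $T$. So after assuming Conclusion~(2) fails, one immediately has $T$ and proceeds to the dichotomy on $I$. Your longer route (essentially reproving pieces of the Swallow Follow Torus Theorem by hand) is not wrong, but it obscures how short the argument really is.

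Second, and more importantly, your degenerate subcase has a genuine gap. When $I=\{1,\dots,n\}$, your iterated weak reduction of $\s$ is: first reduce $\s$ to $F'$, then iteratively reduce $\s_1$ to $\mathcal{T}_1$. The surface this produces is $F'\cup\mathcal{T}_1$, which is $n$ tori, not $n-1$; cutting along it leaves the spurious piece $D(c_2)$. You correctly observe that $\mathcal{T}_1$ alone decomposes $E(K)^{(c)}$ into pieces $E(K_j)^{(c_j)}$, but you have not shown that $\mathcal{T}_1$ alone is the output of \emph{some} iterated weak reduction of $\s$, and your parenthetical ``weakly reduce $\s$ to $F'$, then continue only on the $\s_1$ side'' still yields $F'\cup\mathcal{T}_1$. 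The paper sidesteps this neatly: pick any single torus $T'\in\mathcal{T}_1$. Since $\mathcal{T}_1$ decomposes $E(\#K_i)^{(c_1)}$ into the $n$ pieces $E(K_j)^{(d_j)}$, the torus $T'$ must separate at least one knot summand from another, so its index set $I'$ satisfies $\emptyset\ne I'\ne\{1,\dots,n\}$. Now $T'$ is a connected separating component of $F'\cup\mathcal{T}_1$, a surface obtained from $\s$ by iterated weak reduction; by Proposition~\ref{pro:untel to a conn sep surfce implies weak reduction}, $\s$ weakly reduces to $T'$ alone. This reduces the degenerate subcase to the non-degenerate one (with $T'$ in place of $T$), and the induction closes.
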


The main corollary of Theorem~\ref{thm:sft} allows us to calculate $g(E(\#_{i=1}^{n} K_{i})^{(c)})$
in terms of $g(E(K_{i})^{(c_{i})})$.

\begin{cor}
\label{cor:sft-strong}
In addition to the assumptions of Theorem~\ref{thm:sft}, suppose that no $K_i$ admits an
essential meridional surface $S$ with $\chi(S) \geq \genbd[g(E(\#_{i=1}^{n} K_{i})^{(c)})]$.  Then 
$E(\#_{i=1}^{n} K_{i})^{(c)}$ admits a natural collection of $n-1$ swallow follow tori; 
equivalently, there exist integers $c_1, \dots, c_n \geq 0$ so that $\sum_{i=1}^{n} c_i = c + n - 1$ and 
$$g(E(\#_{i=1}^{n} K_{i})^{(c)}) = \s_{i=1}^n g(E(K_{i})^{(c_{i})}) - (n-1)$$
\end{cor}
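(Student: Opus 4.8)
The plan is to apply Theorem~\ref{thm:sft} to a \emph{minimal} genus Heegaard surface and then combine the resulting swallow follow tori with the amalgamation genus formula (Lemma~\ref{lem:genus after amalgamation}) and the upper bound already established in Corollary~\ref{cor:sft}. The corollary is essentially a packaging of Theorem~\ref{thm:sft}; all the genuine work is in that theorem.

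First I would let $\s$ be a minimal genus Heegaard surface for $E(\#_{i=1}^{n} K_{i})^{(c)}$, so that $g(\s) = g(E(\#_{i=1}^{n} K_{i})^{(c)})$. With this choice, Conclusion~(2) of Theorem~\ref{thm:sft} asserts precisely that some $K_i$ admits an essential meridional surface $S$ with $\chi(S) \geq \genbd[g(E(\#_{i=1}^{n} K_{i})^{(c)})]$, which is exactly what the hypothesis of the corollary forbids. Hence Conclusion~(1) of Theorem~\ref{thm:sft} must hold: $\s$ admits iterated weak reductions producing a collection $\mathcal{T}$ of $n-1$ swallow follow tori that decompose $E(\#_{i=1}^{n} K_{i})^{(c)}$ as $\cup_{\mathcal{T}} E(K_{i})^{(c_i)}$ for some integers $c_1,\dots,c_n \geq 0$ with $\sum_{i=1}^{n} c_i = c+n-1$. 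These are the integers that will appear in the statement.

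Next I would extract the genus identity. The iterated weak reduction induces a Heegaard surface $\s_i$ on each $E(K_i)^{(c_i)}$, and $\s$ is the amalgamation of the $\s_i$ along the tori of $\mathcal{T}$. Since every torus of $\mathcal{T}$ is separating, peeling them off one at a time and applying Lemma~\ref{lem:genus after amalgamation} in the connected case (where the separating surface is a single torus, so its genus is $1$) gives $g(\s) = \sum_{i=1}^{n} g(\s_i) - (n-1)$. Because $g(\s_i) \geq g(E(K_i)^{(c_i)})$ for each $i$, this yields $g(E(\#_{i=1}^{n} K_{i})^{(c)}) \geq \sum_{i=1}^{n} g(E(K_i)^{(c_i)}) - (n-1)$. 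The reverse inequality is immediate from Corollary~\ref{cor:sft} applied to this very collection $c_1,\dots,c_n$. Combining the two inequalities gives the desired equality, which by definition says that $\mathcal{T}$ is a natural collection of swallow follow tori.

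I expect no serious obstacle here. The only points that need a little care are verifying that the exceptional meridional-surface condition of Theorem~\ref{thm:sft} lines up with the hypothesis of the corollary once $\s$ is taken of minimal genus, and spelling out the iterated form of the amalgamation genus formula for a collection of mutually disjoint separating tori; both are routine, the latter by induction on $n$ peeling off one torus at a time.
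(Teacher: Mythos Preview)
Your proposal is correct and matches the paper's own argument, which simply says to apply Theorem~\ref{thm:sft} to a minimal genus Heegaard splitting and then apply Lemma~\ref{lem:genus after amalgamation}. You have merely spelled out the details the paper leaves implicit, including the iterated use of the amalgamation formula and the comparison with Corollary~\ref{cor:sft} to pin down the equality.
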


\begin{proof}
Apply Theorem~\ref{thm:sft} to a minimal genus Heegaard splitting of $E(\#_{i=1}^{n}K_i)^{(c)}$
and apply Lemma~\ref{lem:genus after amalgamation}.
\end{proof}

We get:

\begin{cor}
\label{cor:sft-strong2}
In addition to the assumptions of Theorem~\ref{thm:sft}, suppose that no $K_i$ admits an
essential meridional surface $S$ with $\chi(S) \geq \genbd[g(E(\#_{i=1}^{n} K_{i})^{(c)})]$.  Then 
$$g(E(\#_{i=1}^{n} K_{i})^{(c)}) = \min\Big\{\sum_{i=1}^n g(E(K_{i})^{(c_{i})}) - (n-1)\Big\}$$
where the minimum is taken over all integers $c_1, \dots, c_n \geq 0$ with $\s  c_i = c + n - 1$.
\end{cor}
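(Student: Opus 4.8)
The plan is to deduce the equality by sandwiching $g(E(\#_{i=1}^{n} K_{i})^{(c)})$ between two bounds that are already in hand. The minimum on the right-hand side is taken over the finite nonempty set of tuples $(c_1,\dots,c_n)$ of nonnegative integers with $\sum_{i=1}^{n} c_i = c+n-1$, so it is attained; it therefore suffices to prove ``$\leq$'' for every such tuple and ``$\geq$'' for at least one of them.

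First I would establish the upper bound. By Corollary~\ref{cor:sft}, whose hypotheses are those of Proposition~\ref{pro:sft} ($E(K_i)\not\cong T^2\times I$ and $\partial N(K_i)$ incompressible in $E(K_i)$) and hence are included in the standing hypotheses of Theorem~\ref{thm:sft}, we have $g(E(\#_{i=1}^{n} K_{i})^{(c)}) \leq \sum_{i=1}^n g(E(K_{i})^{(c_{i})}) - (n-1)$ for \emph{every} admissible tuple $(c_1,\dots,c_n)$. Minimizing the right-hand side over all such tuples gives $g(E(\#_{i=1}^{n} K_{i})^{(c)}) \leq \min\{\sum_{i=1}^n g(E(K_{i})^{(c_{i})}) - (n-1)\}$.

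Next I would invoke Corollary~\ref{cor:sft-strong}. Its hypotheses are precisely the standing hypotheses of Theorem~\ref{thm:sft} together with the assumption that no $K_i$ carries an essential meridional surface $S$ with $\chi(S) \geq \genbd[g(E(\#_{i=1}^{n} K_{i})^{(c)})]$ — that is, exactly the hypotheses of the present corollary. It produces a single admissible tuple $(c_1,\dots,c_n)$ for which $g(E(\#_{i=1}^{n} K_{i})^{(c)}) = \sum_{i=1}^n g(E(K_{i})^{(c_{i})}) - (n-1)$. Since the left-hand side is thereby realised as one of the terms entering the minimum on the right, we get $g(E(\#_{i=1}^{n} K_{i})^{(c)}) \geq \min\{\sum_{i=1}^n g(E(K_{i})^{(c_{i})}) - (n-1)\}$. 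Combining the two displayed inequalities yields the asserted equality.

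There is essentially no obstacle here beyond bookkeeping: the statement is a formal consequence of Corollaries~\ref{cor:sft} and~\ref{cor:sft-strong}, and the only point demanding attention is that the meridional-surface hypothesis and the normalization $\genbd[\cdot]$ in Corollary~\ref{cor:sft-strong} are stated against the same quantity $g(E(\#_{i=1}^{n} K_{i})^{(c)})$ that appears in the hypothesis of this corollary — which they are. All the genuine content lives in Theorem~\ref{thm:sft} (through its Corollary~\ref{cor:sft-strong}) and in the swallow-follow-torus construction underlying Corollary~\ref{cor:sft}.
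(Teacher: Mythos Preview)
Your proof is correct and is essentially identical to the paper's own argument: the paper likewise derives the upper bound from Corollary~\ref{cor:sft} (valid for every admissible tuple) and then invokes Corollary~\ref{cor:sft-strong} to exhibit a tuple achieving equality. The only difference is that you spell out the hypothesis-matching more explicitly than the paper does.
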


\begin{proof}
By  Corollary~\ref{cor:sft},
for any collection of integers $c_1,\dots,c_n$ such that $\sum_{i=1}^n c_i = c+ n - 1$ we have that
$$g(E(\#_{i=1}^{n} K_{i})^{(c)}) \leq \sum_{i=1}^n g(E(K_{i})^{(c_{i})}) - (n-1)$$
and by Corollary~\ref{cor:sft-strong},  there exist integers $c_1,\dots,c_n$ for which
equality holds.   The corollary follows.
\end{proof}

\begin{proof}[Proof of Theorem~\ref{thm:sft}]
We induct on $(n,c)$ ordered lexicographically.  
Recall that in the beginning of the proof of Theorem~\ref{thm:strongHH} we showed that 
$(n,c)$ is well defined.  If $n = 1$ there is nothing to
prove; assume from now on $n > 1$.  

Assume Conclusion~(2) of Theorem~\ref{thm:sft} does not hold, that is, for
each $i$, $E(K_i)$ does not admit an essential meridional surface $S$ with $\chi (S)
\geq \genbd[g(\s)]$.   Then by the Swallow Follow Torus Theorem
\cite[Theorem~4.1]{kobayashi-rieck-m-small}
$\s$ weakly reduces to a swallow
follow torus, say $T$.  $T$ decomposes $E(\#_{i=1}^{n} K_i)^{(c)}$ as $E(K_I)^{(c_I)} \cup_T
E(K_J)^{(c_J)}$, where $I \subseteq \{1,\dots,n\}$ (possibly empty), $c_I + c_J
=c+1$, $K_I = \#_{i \in I} K_i$, and $K_J = \#_{i \not\in I} K_i$.  Denote
the Heegaard surfaces induced on  $E(K_I)^{(c_I)}$ and $E(K_J)^{(c_J)}$
by $\s_I$ and $\s_J$, respectively.

\bigskip
\noindent
{\bf Case One: $\emptyset \neq I \neq \{1,\dots,n\}$:}  In this case both 
$E(K_I)^{(c_I)}$ and $E(K_J)^{(c_J)}$ are exteriors of knots with strictly
less than $n$ prime factors and hence we may apply induction to both.
Since $g(\s_I) < g(\s)$, conclusion~(2) of Theorem~\ref{thm:sft} does not
hold for $E(K_I)^{(c_I)}$.
Hence, by induction, $\s_I$ admits  iterated weak reduction that yields a collection of
$|I| - 1$ swallow follow tori (say $\mathcal{T}_I \subset E(K_I)^{(c_I)}$) 
so that the following conditions hold: 
\begin{enumerate}
\item $\mathcal{T}_I$ decompose $E(K_I)^{(c_I)}$ as  $\cup_{\mathcal{T}_I} E(K_i)^{(c_i)}$ 
(for $i \in I$),
\item $\sum_{i \in I} c_i = c_{I} + |I| - 1$.  
\end{enumerate}
Similarly, $\s_J$ admits iterated weak reduction that yields a collection of
$(n-|I|) - 1$ swallow follow tori (say $\mathcal{T}_J \subset E(K_J)^{(c_J)}$) so that the following conditions hold: 
\begin{enumerate}
\item $\mathcal{T}_J$ decompose $E(K_J)^{(c_J)}$ as  $\cup_{\mathcal{T}_J} E_i^{(c_i)}$ (for $i \not\in I$),
\item $\sum_{i \not\in I} c_i = c_{J} + (n-|I|) - 1$.  
\end{enumerate}
Thus after iterated weak reduction of $\s$ we obtain 
$\mathcal{T} = T \cup \mathcal{T}_{I} \cup \mathcal{T}_{J}$.  By the above, $\mathcal{T}$
decomposes $E(\#_{i=1}^{n} K_{i})^{(c)}$ as $\cup_{\mathcal{T}} E(K_{i})^{(c_{i})}$, so that
$\sum_{i=1}^{n} c_{i} = \sum_{i \in I} c_{i} + \sum_{i \not\in I} c_{i} =  c_{I} + |I| - 1 + c_{J} + (n-|I|) - 1 = c + n -1$
(recall that $c_I + c_J = c+1$).
This proves Theorem~\ref{thm:sft} in Case One.

\bigskip
\noindent
{\bf Case Two: $I = \emptyset$ or  $I = \{1,\dots,n\}$.}  By symmetry we may assume that
$I = \{1,\dots,n\}$.  In that case $E(K_{J})^{(c_J)} \cong D(c_{J})$, a disk with $c_{J}$ holes cross
$S^{1}$, and $T$ gives the decomposition:
$$E(\#_{i=1}^{n} K_{i})^{(c)} = E(\#_{i=1}^{n} K_{i})^{(c_{I})} \cup_{T} D(c_{J})$$
Since $T$ is essential (and in particular, not boundary parallel) $c_{J} \geq 2$.
Since $c_{I} + c_{J} = c+1$, we have that $c_{I} < c$.  Thus the complexity of 
$E(\#_{i=1}^{n} K_{i})^{(c_{I})}$ is $(n,c_{I}) < (n,c)$ and we may apply induction to
$E(\#_{i=1}^{n} K_{i})^{(c_{I})}$.   Let $\s_{I}$ be the Heegaard surface for 
$E(\#_{i=1}^{n} K_{i})^{(c_{I})}$ induced by weak reduction.
By induction, $\s_{I}$ admits a  repeated weak reduction that yields
a system of $n-1$ swallow follow tori, say $\mathcal{T}_{I}$, that decomposes 
$E(\#_{i=1}^{n} K_{i})^{(c_{I})}$ as
$$E(\#_{i=1}^{n} K_{i})^{(c_{I})} = \cup_{\mathcal{T}_{I}} E(K_{i})^{(c_{i})}$$
with $\sum_{i=1}^{n} c_{i} = c_{I} + n -1$.  Let $T'$ be a component of 
$\mathcal{T}_{I}$.  
Then $T'$ decomposes
$E(\#_{i=1}^{n} K_{i})^{(c_{I})}$ as 
$E(\#_{i=1}^{n} K_{i})^{(c_{I})} = E(\#_{i \in I'} K_{i})^{(b_{1})}
\cup_{T'}  E(\#_{i \not\in I'} K_{i})^{(b_{2})}$, for some $I' \subseteq \{1,\dots,n\}$
and some integers $b_{1},b_{2} \geq 0$ with $b_{1} + b_{2} = c_I +1$.  
Since $T' \subset \mathcal{T}_{I}$, we have that $\emptyset \neq I' \neq \{1,\dots,n\}$.
By Proposition~\ref{pro:untel to a conn sep surfce implies weak reduction}, 
we see that $\s$ weakly reduce to $T'$.
This reduces Case Two to Case One, 
completing the proof of Theorem~\ref{thm:sft}.
\end{proof}

\section{Calculating the growth rate of m-small knots}
\label{sec:growth-rate}

In this final section we complete the proof of  Theorem~\ref{thm:main}.  
Let $K \subset M$ be
an m-small admissible knot in a compact manifold.  Recall the notation $nK$ and $E(K)^{(c)}$.

The difference between $g(E(K)^{(c)})$ and $g(E(K)) + c$ is measured by a
function denoted $\fff$ that plays a key role our work:

\begin{dfn}
\label{def:fk}
Given a knot $K$, we define the function 
$\fff : \mathbb{Z}_{\geq 0} \to \mathbb{Z}$  
to be
$$\fff(c) = g(E(K)) + c - g(E(K)^{(c)})$$
\end{dfn}
We immediately see that $\fff$ has the following properties, which we will often use without reference:
\begin{enumerate}
\item $\fff(0)= 0$.
\item For  $c \ge 0$, $\fff(c) \leq \fff(c+1) \leq \fff(c)+1$: this follows from the fact (proved in~\cite{rieck})
that for all $c \geq 0$, $g(E(K)^{(c)}) \leq g(E(K)^{(c+1)}) \leq g(E(K)^{(c)}) + 1$
\item For  $c \ge 0$, $0 \leq \fff(c) \leq c$ (follows easily from~(2)). 
\end{enumerate}
  
Before proceeding, we rephrase 
Corollaries~\ref{cor:sft-strong} and~\ref{cor:sft-strong2} in terms of $\fff$:

\begin{cor}
\label{cor:sft-strong3}
Let $K \subset M$ be a knot in a compact manifold and let $n$ be a positive integer.
Suppose that $E(K)$ does not admit a meridional essential surface $S$ with 
$\chi(S) \geq \genbd[g(E(nK))]$.  
Then there exist integers $c_1\dots,c_n \geq 0$ with $\s c_i=n-1$  so that: 
$$g(E(nK)) = n g(E(K)) - \sum_{i=1}^n \fff(c_i)$$
\end{cor}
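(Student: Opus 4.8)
The plan is to derive Corollary~\ref{cor:sft-strong3} as the special case of Corollary~\ref{cor:sft-strong} in which every connected summand is a copy of $K$ and no meridional curves are drilled out. Concretely, I would apply Corollary~\ref{cor:sft-strong} with $n$ as given, $K_1 = \cdots = K_n = K$, $M_1 = \cdots = M_n = M$, and $c = 0$, so that $\#_{i=1}^n K_i = nK$ and $E(\#_{i=1}^n K_i)^{(c)} = E(nK)$. First I would check that the hypotheses of Theorem~\ref{thm:sft} are met for $K$: since $K$ is m-small it is prime (a nontrivial connected sum would produce an essential meridional annulus in the exterior), and since $K$ is an m-small admissible knot in the sense standing throughout this section, $E(K)$ is irreducible, is not homeomorphic to $T^2\times I$, and has incompressible boundary. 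The additional hypothesis of Corollary~\ref{cor:sft-strong}, namely that no $K_i$ carries an essential meridional surface $S$ with $\chi(S)\geq \genbd[g(E(\#_{i=1}^n K_i)^{(c)})]$, becomes after these substitutions exactly the assumption of Corollary~\ref{cor:sft-strong3} that $E(K)$ admits no essential meridional surface $S$ with $\chi(S)\geq \genbd[g(E(nK))]$.

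Granting this, Corollary~\ref{cor:sft-strong} furnishes integers $c_1,\dots,c_n \geq 0$ with $\sum_{i=1}^n c_i = c + n - 1 = n-1$ and
$$g(E(nK)) = \sum_{i=1}^n g(E(K)^{(c_i)}) - (n-1).$$
The remaining step is bookkeeping via the definition of $\fff$. By Definition~\ref{def:fk}, $g(E(K)^{(c_i)}) = g(E(K)) + c_i - \fff(c_i)$ for each $i$, so
$$g(E(nK)) = \sum_{i=1}^n \big(g(E(K)) + c_i - \fff(c_i)\big) - (n-1) = n\,g(E(K)) + \Big(\sum_{i=1}^n c_i\Big) - \sum_{i=1}^n \fff(c_i) - (n-1).$$
Since $\sum_{i=1}^n c_i = n-1$, the terms $\sum_{i=1}^n c_i$ and $-(n-1)$ cancel, and we are left with $g(E(nK)) = n\,g(E(K)) - \sum_{i=1}^n \fff(c_i)$, which is the assertion.

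There is no real obstacle: the statement is just a restatement of Corollary~\ref{cor:sft-strong} in the language of $\fff$. The one point deserving a moment's care is that one must take $c=0$ so that $E(\#_{i=1}^n K_i)^{(c)}$ is \emph{literally} $E(nK)$; this makes $\genbd[g(E(\#_{i=1}^n K_i)^{(c)})] = \genbd[g(E(nK))]$, so the Euler-characteristic hypotheses match exactly rather than only up to an estimate, and one should also record (as above) why an m-small admissible knot verifies the standing hypotheses of Theorem~\ref{thm:sft}.
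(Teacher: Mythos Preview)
Your proposal is correct and follows essentially the same approach as the paper: apply Corollary~\ref{cor:sft-strong} with $K_1=\cdots=K_n=K$ and $c=0$, then rewrite $g(E(K)^{(c_i)})$ as $g(E(K))+c_i-\fff(c_i)$ and use $\sum c_i=n-1$ to cancel. The only difference is that you spell out the verification of the standing hypotheses of Theorem~\ref{thm:sft} (primality, irreducibility, incompressible boundary, $E(K)\not\cong T^2\times I$), which the paper leaves implicit via the standing assumption at the start of Section~\ref{sec:growth-rate} that $K$ is m-small and admissible.
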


\begin{proof}
By Corollary~\ref{cor:sft-strong} (with $c=0$) there exist $c_1,\dots,c_n \ge 0$ with 
$\s c_i = n-1$, so that $g(E(nK)) = \sum_{i=1}^n g(E(K)^{(c_{i})}) - (n-1)$.    We get:

\begin{eqnarray*}
  g(E(nK))  &=& \Big[ \sum_{i=1}^n g(E(K)^{(c_i)})\Big] -(n-1)  \\
          &=& \Big[\sum_{i=1}^n g(E(K)) + c_i - \fff(c_i)\Big]-(n-1)   \\
          &=& n g(E(K)) + \Big[\sum_{i=1}^n c_i\Big] - \Big[\sum_{i=1}^n \fff(c_i)\Big]-(n-1)\\
          &=& n g(E(K)) + (n-1) - \Big[\sum_{i=1}^n \fff(c_i)\Big]-(n-1)\\
          &=& n g(E(K)) - \sum_{i=1}^n \fff(c_i)
\end{eqnarray*}
\end{proof}

A similar argument shows that Corollary~\ref{cor:sft-strong2} gives:

\begin{cor}
\label{cor:sft-strong4}
Let $K \subset M$ be a knot in a compact manifold and let $n$ be a positive integer.
Suppose that $E(K)$ does not admit a meridional essential surface $S$ with 
$\chi(S) \geq \genbd[g(E(nK))]$.  
Then we have:
\begin{eqnarray*}
g(E(nK)) &=& \min\big\{ng(E(K)) - \s_{i=1}^n \fff(c_i)\big\} \\
       &=& ng(E(K)) - \max \big\{ \s_{i=1}^n \fff(c_i)\big\} 
\end{eqnarray*}
where the minimum and maximum are taken over all integers $c_1,\dots,c_n \geq 0$ with $\sum_{i=1}^{n} c_i=n-1$.
\end{cor}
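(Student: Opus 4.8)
The plan is to follow verbatim the template used in the proof of Corollary~\ref{cor:sft-strong3}, but feeding in the sharper Corollary~\ref{cor:sft-strong2} in place of Corollary~\ref{cor:sft-strong}. First I would apply Corollary~\ref{cor:sft-strong2} with all the knots taken to be $K$ and with $c=0$; its hypothesis — that no factor admits an essential meridional surface $S$ with $\chi(S) \geq \genbd[g(E(\#_{i=1}^n K)^{(0)})]$ — is literally the hypothesis of the present corollary, since $\#_{i=1}^n K = nK$ and $E(nK)^{(0)} = E(nK)$. This yields
$$g(E(nK)) = \min\Big\{ \textstyle\sum_{i=1}^n g(E(K)^{(c_i)}) - (n-1) \Big\},$$
the minimum taken over all integers $c_1,\dots,c_n \geq 0$ with $\sum_{i=1}^n c_i = n-1$.

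Next I would substitute the defining identity $g(E(K)^{(c_i)}) = g(E(K)) + c_i - \fff(c_i)$ of Definition~\ref{def:fk} into each summand, exactly as in the displayed computation in the proof of Corollary~\ref{cor:sft-strong3}. Using $\sum_{i=1}^n c_i = n-1$, the two occurrences of $n-1$ cancel and the bracketed expression collapses to $n g(E(K)) - \sum_{i=1}^n \fff(c_i)$, giving the first displayed equality
$$g(E(nK)) = \min\Big\{ n g(E(K)) - \textstyle\sum_{i=1}^n \fff(c_i) \Big\}.$$
Since $n g(E(K))$ is a constant independent of the choice of $c_1,\dots,c_n$, minimizing $n g(E(K)) - \sum_i \fff(c_i)$ amounts to subtracting the maximum of $\sum_i \fff(c_i)$, which is the second equality; that completes the argument.

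I do not expect a genuine obstacle here, since all the real work lies in Theorem~\ref{thm:sft} and its corollaries together with the elementary monotonicity properties of $\fff$ recorded after Definition~\ref{def:fk}. The only points deserving a word of care are bookkeeping: checking that the meridional-surface bound in the hypothesis of Corollary~\ref{cor:sft-strong2} is indeed $\genbd[g(E(nK))]$ (which is immediate once we set $c=0$), and, as with Corollaries~\ref{cor:sft-strong} and~\ref{cor:sft-strong3}, noting that the standing hypotheses of Theorem~\ref{thm:sft} on $E(K)$ hold for the m-small admissible knots to which Theorem~\ref{thm:main} applies, so that the corollary is available in the situations in which it will be used.
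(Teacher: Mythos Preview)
Your proposal is correct and is essentially the paper's own argument: the paper simply says ``a similar argument shows that Corollary~\ref{cor:sft-strong2} gives'' the result, meaning exactly what you describe---apply Corollary~\ref{cor:sft-strong2} with $c=0$ and all factors equal to $K$, then run the $\fff$-substitution from the proof of Corollary~\ref{cor:sft-strong3}. Your remarks on matching the meridional-surface hypothesis and on the standing assumptions of Theorem~\ref{thm:sft} are accurate bookkeeping.
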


Recall (Notation~\ref{notation:bridge indices}) that we denote $g(E(K)) - g(M)$ by $g$
and the bridge indices of $K$ with respect to
Heegaard surfaces of genus $g(E(K)) - i$
by $b_i^*$ ($=1,\dots,g$), so that
$0<b^{*}_1<\dots<b^{*}_i<\dots<b^{*}_g$.
We formally
set $b_0^* = 0$ and $b_{g+1}^* = \infty$.  
Note that these properties imply that for every $c \geq 0$ there is a unique index $i$ ($0 \leq i \leq g$),
depending on $c$, 
so that $b^{*}_i \leq c  < b^{*}_{i+1}$; we will use this fact below without reference.

In the following proposition we calculate $\fff(c)$  
when $E(K)$ does not admit an essential
meridional surface $S$ with $\chi(S) \geq \genbd[g(E(K)^{(c)})]$.

\begin{pro}
\label{pro:fk}
Let $K$ be a knot and $c \geq 0$ an integer.  Let $0 \leq i \leq g$ be the unique index for which
$b^{*}_i \leq c  < b^{*}_{i+1}$.  Then $\fff(c) \geq i$.  If in addition $E(K)$ does
not admit an essential meridional surface $S$ with 
$\chi(S) \geq \genbd[g(E(K)^{(c)})]$ then equality holds:
$$\fff(c) = i$$
\end{pro}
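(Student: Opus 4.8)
The plan is to prove the two bounds $\fff(c)\ge i$ and, under the extra hypothesis, $\fff(c)\le i$, separately. For the lower bound: if $i=0$ there is nothing to prove, since $\fff(c)\ge 0$ always. If $i\ge 1$ then $b^*_i\le c$, and by the definition of $b^*_i$ the knot $K$ admits a $(g(E(K))-i,\,b^*_i)$ decomposition; adding $c-b^*_i$ trivial (boundary-parallel) arcs to this bridge position yields a $(g(E(K))-i,\,c)$ decomposition of $K$. Applying Corollary~\ref{cor:upper-bound-for-X(b)} with $h-c=g(E(K))-i$, so $h=g(E(K))+c-i$, gives $g(E(K)^{(c)})\le g(E(K))+c-i$, which is exactly $\fff(c)\ge i$.

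For the upper bound I would invoke the Strong Hopf--Haken Annulus Theorem in the packaged form of Corollary~\ref{cor:strongHH} (its hypotheses, apart from the meridional-surface condition, are the standing assumptions on $K$ in this section). Put $h:=g(E(K)^{(c)})$. Since $g(E(K)^{(c)})\le h$ trivially, and since the hypothesis of Corollary~\ref{cor:strongHH} is precisely the meridional-surface hypothesis we are assuming, that corollary produces an $(h-c,\,c)$ decomposition of $K$. Rewriting $h-c=g(E(K))-\fff(c)$, this is a $(g(E(K))-\fff(c),\,c)$ decomposition; in particular $M$ admits a Heegaard surface of genus $g(E(K))-\fff(c)$, so $g(E(K))-\fff(c)\ge g(M)$, that is $\fff(c)\le g$. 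If moreover $\fff(c)\ge 1$, then $b^*_{\fff(c)}$ is a well-defined term of the sequence $0<b^*_1<\dots<b^*_g$, and by its minimality the decomposition just obtained gives $b^*_{\fff(c)}\le c$.

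It then remains to combine the pieces. Suppose for contradiction that $\fff(c)\ge i+1$. If $i+1\le g$, strict monotonicity of $b^*_1<\dots<b^*_g$ gives $c\ge b^*_{\fff(c)}\ge b^*_{i+1}>c$, a contradiction; and $i+1=g+1$ is impossible since $\fff(c)\le g$. Hence $\fff(c)\le i$, and with the lower bound we conclude $\fff(c)=i$ (the borderline case $\fff(c)=0$ forces $i=0$ by the lower bound, so it is covered). The one genuinely delicate point is the appeal to Corollary~\ref{cor:strongHH}: this is the single step that uses the absence of essential meridional surfaces of large Euler characteristic, and it is exactly what can fail in general --- without it only the inequality $\fff(c)\ge i$ from the first paragraph survives.
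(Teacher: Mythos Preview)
Your proof is correct and follows essentially the same approach as the paper's: the lower bound via Corollary~\ref{cor:upper-bound-for-X(b)} is identical, and for the upper bound both arguments invoke Corollary~\ref{cor:strongHH} to produce a $(g(E(K))-j,c)$ decomposition (you with $j=\fff(c)$ directly, the paper with $j=i+1$ after assuming $\fff(c)\ge i+1$), and then derive a contradiction from the definition of $b^*_{i+1}$ when $i<g$ and from $g(M)$ when $i=g$. The only cosmetic difference is that you apply Corollary~\ref{cor:strongHH} once with $h=g(E(K)^{(c)})$ and split cases afterwards, whereas the paper splits into $i=g$ and $i<g$ first and applies the corollary in each case.
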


\begin{proof}[Proof of Proposition~\ref{pro:fk}]
We first prove that $\fff(c) \geq i$ holds for any knot.  Since $\fff$ is a non-negative 
function we may assume $i \geq 1$.
By the definition of $b^{*}_i$, $K$ admits a ($g(E(K)) - i$, $b^{*}_i$) decomposition.  
Since $c \geq b^{*}_i$, $K$ admits a $(g(E(K)) - i, c)$ decomposition.  By~Corollary~\ref{cor:upper-bound-for-X(b)}
we have that $g(E(K)^{(c)}) \leq g(E(K)) -i + c$.
Therefore, $\fff(c) = g(E(K)) + c - g(E(K)^{(c)}) \geq g(E(K)) + c - (g(E(K)) -i+c) = i$.

Next we assume, in addition, that $E(K)$ does not admit an essential meridional
surface $S$ with $\chi(S) \geq \genbd[g(E(K)^{(c)})]$.  
We will complete the proof of the proposition by showing that
$\fff(c) < i+1$; suppose for a contradiction that $\fff(c) \geq i+1$.
Thus $g(E(K)^{(c)}) = g(E(K)) + c - \fff(c) \leq g(E(K)) + c - (i+1)$.

Assume first that $i=g$.  Then by 
Corollary~\ref{cor:strongHH} (with  $g(E(K))+c-(g+1)$ corresponding to 
$h$) we see that 
$k$ admits a $(g(E(K))+c-(g+1)-c, c)$ decomposition.  In particular, $M$ 
admits a Heegaard surface of genus $(g(E(K)))+c-(g+1)-c$.  Hence we see:
\begin{eqnarray*}
g(M) & \leq & (g(E(K))+c-(g+1)-c \\
& = & g(E(K)) - g-1 \\
& = & g(E(K)) - (g(E(K)) - g(M)) -1 \\
&=& g(M) -1
\end{eqnarray*}
This contradiction completes the proof when $i=g$.

Next assume that $0 \le i < g$.  Applying
Corollary~\ref{cor:strongHH} again (with $g(E(K))+c-(i+1)$ corresponding to 
$h$ in Corollary~\ref{cor:strongHH}) we see that  $K$ admits a
$(g(E(K)) - (i+1), c)$ decomposition.  By definition, $b^{*}_{i+1}$ is the smallest integer 
so that $K$ admits a $(g(E(K)) - (i+1),b^{*}_{i+1})$ decomposition; hence $c \geq b^{*}_{i+1}$.
This contradicts our choice of $i$ in the statement of the proposition, showing that $\fff(c) < i+1$.  
This completes the proof of Proposition~\ref{pro:fk}.
\end{proof}

As an illustration of Proposition~\ref{pro:fk}, let $K$ be an m-small knot in $S^3$. 
Suppose that $g=3$,   $b^{*}_1=5$, $b^{*}_2=7$, and $b^{*}_3 =
23$. (We do not know if a knot with these properties exists.) Then:

$$\fff(c) =   \left\{
\begin{array}{l} 
0 \hspace{1in} 0 \leq c \leq 4 \\  %
1 \hspace{1in} 5 \leq c \leq 6 \\ %
2 \hspace{1in} 7 \leq c \leq 22 \\ %
3 \hspace{1in} 23 \leq c \\ %
\end{array} \right.$$

Not much is known about $\fff$ for knots that are not m-small:

\begin{question}
\label{que:bounded}
Does there exist a knot $K$ in a manifold $M$ with unbounded $\fff$?
Does there exist a knot $K$ with $\fff(c) > g(E(K)) - g(M)$ (for sufficiently large $c$)?
What can be said about the behavior of the function $\fff$? 
\end{question}

With the preparation complete, we are now ready to prove Theorem~\ref{thm:main}.

\begin{proof}[Proof of Theorem~\ref{thm:main}]
Fix the notation of Theorem~\ref{thm:main}.  Since the upper bound was obtained in
Proposition~\ref{pro:upper-bound}, we assume from now on that $K$ is m-small.
By Corollary~\ref{cor:sft-strong4}, $g(E(nK)) = n g(E(K)) - \max \{ \s_{i=1}^n
\fff(c_i) \}$, where the maximum is taken over all integers $c_{1},\dots,c_{n} \geq 0$ 
with $\sum_{i=1} ^{n} c_i = n-1$.

Fix $n$ and let $c_1,\dots,c_n \geq 0$ be integers with $\sum_{i=1}^{n} c_i = n-1$ that maximize
$\s_{i=1}^n \fff(c_i)$.

\begin{lem}
\label{lem:maximizingsequence}
We may assume that the sequence $c_1,\dots,c_{n}$ fulfills the following conditions for some $1 \leq l \leq n$:
\begin{enumerate}
\item $c_{i} \geq c_{i+1}$ ($i=1,\dots,n-1$).
\item For $i \leq l$, $c_{i} \in \{b_{1}^{*},\dots,b_{g}^{*}\}$.
\item  $c_{l+1} < b_{1}^{*}$.
\item For $i > l+1$, $c_{i} = 0$.
\end{enumerate}
\end{lem}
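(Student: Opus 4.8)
The plan is to combine the explicit formula for $\fff$ on m-small knots with two elementary ``rebalancing'' moves performed inside the set of maximizing sequences. First I would pin down the shape of $\fff$: since $K$ is m-small, $E(K)$ carries no essential meridional surface at all, so Proposition~\ref{pro:fk} applies for every $c \ge 0$ and gives $\fff(c) = i$ exactly when $b_i^* \le c < b_{i+1}^*$ (with the conventions $b_0^* = 0$, $b_{g+1}^* = \infty$). Hence $\fff$ is non-decreasing, $\fff(b_i^*) = i$, and $\fff$ is constant on each interval $[b_i^*, b_{i+1}^*)$. For an integer $c$ let $\rho(c)$ denote the largest element of $\{0, b_1^*, \dots, b_g^*\}$ that is $\le c$, and call $c$ \emph{bad} if $c \ge 1$ and $c \notin \{b_1^*, \dots, b_g^*\}$, equivalently $\rho(c) < c$; note that then $\fff(\rho(c)) = \fff(c)$ by the constancy just stated, and that a bad $c$ with $c < b_1^*$ lies in $[1, b_1^*-1]$. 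In this language the conclusion of the lemma is precisely: after sorting in non-increasing order, at most one entry is bad, and if there is a bad entry then it is $< b_1^*$; one then takes $l$ to be the number of entries lying in $\{b_1^*,\dots,b_g^*\}$.

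Next I would record two easy observations. Permuting $(c_1,\dots,c_n)$ changes neither $\sum c_i$ nor $\sum \fff(c_i)$, so condition~(1) costs nothing and we may re-sort whenever convenient; and since $\sum c_i = n-1 < n$, there is always at least one index with $c_i = 0$, a fact that none of our moves will destroy. Now the two moves. \textbf{Merge.} If $c_i, c_k$ ($i \ne k$) are both bad, replace $(c_i, c_k)$ by $(\rho(c_i),\, c_k + (c_i - \rho(c_i)))$: this preserves $\sum c_i$, and $\sum \fff$ cannot drop because $\fff(\rho(c_i)) = \fff(c_i)$ and $\fff$ is non-decreasing, so by maximality $\sum \fff$ is unchanged and the new sequence is again maximizing; since $c_i$ has become non-bad, the number of bad entries strictly decreases. \textbf{Split.} If some bad entry $c_{i_0}$ satisfies $c_{i_0} \ge b_1^*$, write $b_j^* \le c_{i_0} < b_{j+1}^*$ with $j \ge 1$, set $\delta := c_{i_0} - b_j^* \ge 1$, pick $k$ with $c_k = 0$, and replace $(c_{i_0}, c_k)$ by $(b_j^*, \delta)$: again $\sum c_i$ is preserved and $\sum \fff$ changes by $\fff(\delta) \ge 0$, so maximality forces $\fff(\delta) = 0$, i.e.\ $\delta < b_1^*$; now $c_{i_0}$ has become the bridge index $b_j^*$ while the former zero has become a bad entry in $[1, b_1^*-1]$, so the number of bad entries is unchanged but the number of bad entries that are $\ge b_1^*$ strictly decreases.

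Finally I would iterate these moves to termination. Using the lexicographically ordered pair $\big(\#\{i : c_i \text{ bad}\},\ \#\{i : c_i \text{ bad and } c_i \ge b_1^*\}\big)$, a Merge strictly decreases the first coordinate and a Split strictly decreases the second while fixing the first, so after finitely many moves no move applies. Because a zero entry is always present, ``no Split applies'' means no bad entry is $\ge b_1^*$, and ``no Merge applies'' means there is at most one bad entry. Re-sorting the resulting maximizing sequence in non-increasing order then lists first the entries lying in $\{b_1^*,\dots,b_g^*\}$, then the unique $< b_1^*$ bad entry if there is one (and otherwise a zero), then the remaining zeros; this is exactly conditions~(1)--(4), with $l$ the number of bridge-index entries. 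One checks $l \ge 1$ as soon as $n-1 \ge b_1^*$ (put one entry equal to $b_1^*$, one equal to $n-1-b_1^*$, the rest $0$, forcing the maximal sum to be $\ge 1$), and the finitely many smaller $n$ are irrelevant to the growth-rate limit where the lemma is used.

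The step I expect to be the main obstacle is arranging ``few bad entries'' and ``bad entries small'' at the same time: a Merge move can manufacture a new bad entry that is itself $\ge b_1^*$, so simply minimizing the number of bad entries is not sufficient. What resolves this is the two-coordinate lexicographic potential above, together with the observation that $\sum c_i = n-1 < n$ keeps a zero slot permanently available so that the Split move is never blocked.
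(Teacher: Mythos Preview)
Your argument is correct. Both your proof and the paper's rest on the same two facts extracted from Proposition~\ref{pro:fk}: $\fff$ is non-decreasing, and $\fff(\rho(c)) = \fff(c)$ for your round-down map $\rho$. The difference is that the paper does the whole rearrangement in a single stroke, whereas you iterate.

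Concretely, after sorting the paper simply takes $l$ to be the last index with $\fff(c_l)\neq 0$, replaces each $c_i$ ($i\le l$) by $\rho(c_i)$ all at once, and deposits the total saved amount $\sum_{i\le l}(c_i-\rho(c_i))$ together with $\sum_{i>l}c_i$ into the single slot $c_{l+1}'$; maximality then forces $\fff(c_{l+1}')=0$, i.e.\ $c_{l+1}'<b_1^*$. This is exactly the cumulative effect of running all your Merges first (collapsing every bad entry's excess into one slot) and then, if needed, one Split---but done in one line, with no termination bookkeeping. Your two-move iteration with the lexicographic potential is a perfectly valid alternative, and it makes the mechanism more transparent (especially your observation that a zero slot is perpetually available since $\sum c_i=n-1<n$), but it is longer than necessary: the ``Merge can create a large bad entry'' obstacle you flag simply does not arise in the one-shot version. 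Your treatment of the edge case $n-1<b_1^*$ (where $l=0$ and the literal bound $l\ge 1$ fails) is more careful than the paper's, which tacitly assumes a largest index with $\fff(c_l)\neq 0$ exists; as you note, only finitely many $n$ are affected and the lemma is only used in the limit.
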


\begin{proof}
By reordering the indices if necessary we 
may assume~(1) holds.

Let $l$ be the largest index for which $\fff(c_l) \neq 0$.  
For $i=1,\dots,l$, let $0 \leq j(i) \leq g$ be the unique index for which 
$b^{*}_{j(i)} \le c_{i} < b^{*}_{j(i)+1}$ (recall that we set $b^{*}_0 =0$ and $b_{g+1}^{*} = \infty$).
Define $c'_{1},\dots,c'_{n}$ as follows:

\begin{enumerate}
\item For $i  \leq l$, set $c'_i =b^{*}_{j(i)}$ (in other words, $c_{i}'$ is the largest $b_{j}^{*}$
that does not exceed $c_{i}$).
\item Set $c'_{l+1} = n-1- (\sum _{i=1} ^{l} c'_i)$.
\item For $i > l+1$ set $c'_{i} = 0$.
\end{enumerate}

By Proposition~\ref{pro:fk}, for $i \leq l$, $\fff(c_{i}) = \fff(b^{*}_{j(i)}) = \fff(c_{i}')$.  
We get :

\begin{eqnarray*}
\sum_{i=1}^{n} \fff(c'_{i}) &=& \sum_{i=1}^{l} \fff(c'_{i}) + \sum_{i=l+1}^{n} \fff(c'_{i}) \\
&=& \sum_{i=1}^{l} \fff(c_{i}) + \sum_{i=l+1}^{n} \fff(c'_{i}) \\
&\geq& \sum_{i=1}^{l} \fff(c_{i}) \\
&=& \sum_{i=1}^{n} \fff(c_{i}).
\end{eqnarray*}

(For the last equality, recall that $\fff(c_{i}) = 0$ for $i > l$.)

Since $c_{1},\dots,c_{n}$ maximizes $\sum_{i=1}^{n} \fff(c_{i})$,
we conclude that $\sum_{i=1}^{n} \fff(c_{i}) = \sum_{i=1}^{n} \fff(c_{i}')$ and 
hence $\fff(c'_{l+1}) = 0$; thus $c_{l+1}' < b_1^*$.
Thus $c'_{1},\dots,c'_{n}$ is a maximizing sequence;
it is easy to see that it fulfills conditions~(1)--(4).
\end{proof}

We will denote the $n^{\rm th}$
term of the defining sequence of the growth rate by $S_{n}$, that is:
$$S_{n} =  \frac{g(E(nK)) - ng(E(K)) + n-1}{n-1}$$  
By Corollary~\ref{cor:sft-strong4} the following holds:

\begin{equation}
\label{eq:S_n}
S_{n}     = 1 - \frac{\max \{ \Sigma_{i=1}^n \fff(c_i) \} }{n-1}
\end{equation}
In order to bound $S_{n}$ below we need to understand the following optimization problem, where
here we are assuming that the maximizing sequence fulfills the conditions listed in  Lemma~\ref{lem:maximizingsequence},
and in particular, $\fff(c_{i})=0$ for $i > l$. 

\begin{prob}\label{prob:original}
Find non negative integers $l$ and $c_1,\dots,c_l$ that 
maximize $\sum_{i=1}^l \fff (c_i)$ subject to the constraints:
\begin{enumerate}
\item $\sum_{i=1}^l c_i \leq n-1$
\item $c_{i}  \in \{b^{*}_1,\dots,b^{*}_g\}$ (for $1 \leq i \leq l$).
\end{enumerate}
\end{prob}

For $i=1,\dots,g$, let $k_i$ be the number of times that $b^{*}_i$ appears in
$c_1,\dots,c_l$.  By Proposition~\ref{pro:fk}, $\fff(b^{*}_i) = i$; thus 
Problem~\ref{prob:original} can be rephrased as follows:  

\begin{prob}\label{prob:rewritten}
Maximize $\sum_{i=1}^g k_i i$ subject to the constraints:
\begin{enumerate}
\item $\sum_{i=1}^g k_i b^{*}_i \leq n-1$
\item $k_{i}$ is a non-negative integer
\end{enumerate}
\end{prob}

We first solve this optimization problem over $\mathbb{R}$;
we use the variables $x_1, \dots, x_g$ instead of $k_1, \dots, k_g$.

\begin{prob}\label{prob:over-R}
Given $n \in \mathbb{R}$, $n > 1$, maximize $\sum_{i=1}^g x_{i} i$ 
subject to the constraints  
\begin{enumerate}
\item $\sum_{i=1}^g x_i b^{*}_i \leq n-1$
\item $x_1 \ge 0, \dots, x_g \ge 0$
\end{enumerate}

\end{prob}

It is easy to see that for any sequence $x_1,\dots,x_g$ that realizes maximum
we have that $\sum_{i=1}^g x_i b^{*}_i = n-1$, for otherwise we can increase the value of
$x_1$, thus increasing $\sum_{i=1}^g x_{i} i$ and contradicting maximality.
Problem~\ref{prob:over-R} is an elementary linear programming problem
(known as the standard maximum problem) and is solved using the
simplex method which gives: 

\begin{lem}
\label{lem:over-R}
There is a (not necessarily unique) index $i_0$, 
which is independent of $n$, such that a solution of Problem~\ref{prob:over-R} 
is given by
$$x_{i_0} = \frac{n-1}{b_{i_0}^*}, \ \ \ \ x_i = 0 (i \neq i_0) $$ 
Hence the maximum is 
$$\frac{(n-1)i_0}{b_{i_0}^*}$$   
\end{lem}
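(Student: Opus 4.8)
The plan is to recognize Problem~\ref{prob:over-R} as a continuous (fractional) knapsack problem and to solve it by a direct weighted-average estimate rather than by formally running the simplex algorithm. First I would choose $i_0$ to be an index in $\{1,\dots,g\}$ that maximizes the ratio $i/b_i^*$; such an index exists because $\{1,\dots,g\}$ is finite, and — this is exactly the assertion of the lemma — it depends only on the constants $b_1^*,\dots,b_g^*$ and not on $n$. (If one wanted the choice to be canonical one could take the smallest such index, but the statement only claims existence.)

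For the upper bound I would observe that every feasible point $x=(x_1,\dots,x_g)$ satisfies
\[
\sum_{i=1}^g i\,x_i \;=\; \sum_{i=1}^g \frac{i}{b_i^*}\,\bigl(b_i^*\,x_i\bigr)\;\leq\;\frac{i_0}{b_{i_0}^*}\sum_{i=1}^g b_i^*\,x_i\;\leq\;\frac{i_0}{b_{i_0}^*}\,(n-1),
\]
where the first inequality uses $x_i\ge 0$, $b_i^*>0$, and the defining property of $i_0$, and the second uses constraint~(1). For the reverse inequality I would exhibit the explicit point $x_{i_0}=(n-1)/b_{i_0}^*$ and $x_i=0$ for $i\neq i_0$: constraint~(2) is immediate, constraint~(1) holds with equality since $b_{i_0}^*\cdot(n-1)/b_{i_0}^*=n-1$, and $n>1$ guarantees $x_{i_0}\ge 0$; the objective value of this point is precisely $(n-1)\,i_0/b_{i_0}^*$. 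Combining the two bounds shows this point is optimal and that the maximum equals $(n-1)\,i_0/b_{i_0}^*$, as claimed.

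There is essentially no technical obstacle here, so the only thing to be careful about is the bookkeeping: confirming that $i_0$ is genuinely independent of $n$ (it is fixed once the finitely many numbers $1/b_1^*,2/b_2^*,\dots,g/b_g^*$ are compared), and using $n>1$ to ensure the optimum is strictly positive and hence attained at one of the ``corner'' points $\bigl((n-1)/b_j^*\bigr)e_j$ rather than at the origin. This is also why I would avoid citing the simplex method as a black box: the polytope $\{x\ge 0:\sum_i b_i^*x_i\le n-1\}$ has only the origin and the $g$ points $\bigl((n-1)/b_j^*\bigr)e_j$ as vertices, so a direct comparison of the linear objective at these vertices is shorter and more transparent than a simplex tableau, and it makes the independence of $i_0$ from $n$ manifest since the objective values at the nonzero vertices are simply $(n-1)$ times the fixed constants $j/b_j^*$.
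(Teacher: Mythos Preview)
Your proof is correct and takes a genuinely different route from the paper's. The paper argues geometrically: it sets up the simplex $\Delta_n=\{x\ge 0:\sum_i b_i^* x_i=n-1\}$, observes that the gradient of the restricted objective is the projection $\vec{P}=\vec{F}-\frac{\vec{F}\cdot\vec{N}}{|\vec{N}|^2}\vec{N}$ (with $\vec{F}=(1,\dots,g)$, $\vec{N}=(b_1^*,\dots,b_g^*)$), notes that $\vec{P}$ is independent of $n$, and then invokes the simplex method to conclude that the optimum is attained at a vertex whose nonzero coordinate does not depend on $n$. Your argument is more elementary and more explicit: you identify $i_0$ concretely as any maximizer of the ratio $i/b_i^*$ and prove optimality by the one-line weighted-average estimate $\sum_i i\,x_i=\sum_i (i/b_i^*)(b_i^* x_i)\le (i_0/b_{i_0}^*)\sum_i b_i^* x_i\le (i_0/b_{i_0}^*)(n-1)$. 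This has the advantage of actually naming $i_0$ (the paper's proof does not), making the independence from $n$ immediate, and avoiding any black-box appeal to linear programming; it also makes visible why the quantity $\min_i\{1-i/b_i^*\}$ in Theorem~\ref{thm:main} appears, since your $i_0$ is precisely an index realizing that minimum. The paper's gradient viewpoint would generalize more readily to other linear programs, but for this single-constraint problem your fractional-knapsack argument is shorter and more transparent.
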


\begin{proof}[Proof of Lemma~\ref{lem:over-R}]
The notation used in this
proof was chosen to be consistent with notation often used in linear
programming texts.
Let ${\bf \overrightarrow N},{\bf \overrightarrow F}$ and $\vec{x} \in \mathbb{R}^{g}$
denote the following vectors 
$${\bf \overrightarrow N} = (b_{1}^{*},\dots,b_{g}^{*}), \ \ \ {\bf \overrightarrow F} = (1,\dots,g), \ \mbox{ and } \ \vec{x} = (x_{1},\dots,x_{g})$$
For $n \in \mathbb{R}$, $n>1$, let $\Delta_{n}$ be 
$$\Delta_{n} = \{\vec{x} \in \mathbb{R}^{g} \ | {\bf \overrightarrow N} \cdot \vec{x} = n-1, x_{1} \geq 0,\dots,x_g \geq 0 \}$$
Note that $\Delta_{n}$ is a simplex and its codimension $k$ faces are obtained by setting
$k$ variables to zero.
Problem~\ref{prob:over-R} can be stated as:
$$\mbox{maximize   } \ \ {\bf \overrightarrow F}\cdot \vec{x}, \ \ \mbox{   subject to   } \ \ \vec{x} \in \Delta_{n}$$
Since the gradient of ${\bf \overrightarrow F}\cdot \vec{x}$
is ${\bf \overrightarrow F}$ and the normal to $\Delta_{n}$
is ${\bf \overrightarrow N}$, the gradient of the restriction of
${\bf \overrightarrow F}\cdot \vec{x}$ to ${\Delta_{n}}$ is
the projection
$${\bf \overrightarrow P} = {\bf \overrightarrow F} - 
\frac{{\bf \overrightarrow F} \cdot {\bf \overrightarrow N}}{|{\bf \overrightarrow N}|^{2}}{\bf \overrightarrow N}$$
Note that ${\bf \overrightarrow P}$ is independent of $n$.  The maximum of
${\bf \overrightarrow N}\cdot \vec{x}$ on ${\Delta_{n}}$ is found by moving along 
$\Delta_{n}$ in the direction of ${\bf \overrightarrow P}$.  This
shows that the maximum is obtained along a face defined by setting
some of the variables to zero, and the variables set to zero are independent
of $n$.  Lemma~\ref{lem:over-R} follows by picking $i_{0}$
to be one of the variables not set to zero.
\end{proof}

Fix an index $i_{0}$ as in Lemma~\ref{lem:over-R}.
If $b^{*}_{i_{0}} | n-1$ then the maximum (over $\mathbb{R}$) found in  Lemma~\ref{lem:over-R}
is in fact an integer and hence is also the maximum for Problem~\ref{prob:original}.  
This allows us to calculate $S_{n}$ in this case:

\begin{lem}
\label{lem:divisible}
If $b^{*}_{i_{0}} | n-1$ then $S_{n} =  1 - {i_{0}}/{b^{*}_{i_{0}}}$.
\end{lem}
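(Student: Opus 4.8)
The plan is to combine the linear-programming solution of Lemma~\ref{lem:over-R} with the elementary observation that, under the divisibility hypothesis, the real optimum is already attained at an integer point, so the relaxation is tight.

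First, recall from~(\ref{eq:S_n}) that $S_n = 1 - \frac{\max\{\sum_{i=1}^n \fff(c_i)\}}{n-1}$, where the maximum ranges over all collections of non-negative integers $c_1,\dots,c_n$ with $\sum_{i=1}^n c_i = n-1$. By Lemma~\ref{lem:maximizingsequence} this maximum is realized by a sequence of the special form described there, and translating such a sequence into the multiplicities $k_i$ of each $b_i^*$ (using $\fff(b_i^*) = i$, which holds by Proposition~\ref{pro:fk} since $K$ is m-small) identifies $\max\{\sum_{i=1}^n \fff(c_i)\}$ with the optimal value of the integer program Problem~\ref{prob:rewritten}; denote this value by $O_{\mathbb{Z}}(n)$.

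Second, I would note that Problem~\ref{prob:rewritten} is a restriction of Problem~\ref{prob:over-R}: any feasible integer vector $(k_1,\dots,k_g)$ for Problem~\ref{prob:rewritten} is also feasible for Problem~\ref{prob:over-R}, with the same objective value. Hence $O_{\mathbb{Z}}(n) \leq \frac{(n-1)i_0}{b_{i_0}^*}$ by Lemma~\ref{lem:over-R}. Conversely, when $b_{i_0}^* \mid n-1$ the optimal real point produced by Lemma~\ref{lem:over-R}, namely $x_{i_0} = (n-1)/b_{i_0}^*$ and $x_i = 0$ for $i \neq i_0$, has non-negative integer coordinates, so it is feasible for Problem~\ref{prob:rewritten} and certifies $O_{\mathbb{Z}}(n) \geq \frac{(n-1)i_0}{b_{i_0}^*}$. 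Therefore $O_{\mathbb{Z}}(n) = \frac{(n-1)i_0}{b_{i_0}^*}$.

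Finally, substituting back gives $S_n = 1 - \frac{1}{n-1}\cdot\frac{(n-1)i_0}{b_{i_0}^*} = 1 - \frac{i_0}{b_{i_0}^*}$, as claimed. There is no serious obstacle here: the only point requiring care is the bookkeeping that reduces the combinatorial maximum defining $S_n$ to Problem~\ref{prob:rewritten}, and this has already been carried out in Lemma~\ref{lem:maximizingsequence} and in the discussion preceding Problem~\ref{prob:over-R}; the divisibility hypothesis then does exactly the work of making the linear-programming relaxation tight.
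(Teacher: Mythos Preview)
Your proof is correct and follows essentially the same approach as the paper: both argue that when $b_{i_0}^* \mid n-1$ the optimal real point from Lemma~\ref{lem:over-R} has integer coordinates, so the LP relaxation is tight and the integer maximum equals $(n-1)i_0/b_{i_0}^*$, which is then substituted into~(\ref{eq:S_n}). You are somewhat more explicit than the paper in spelling out both inequalities $O_{\mathbb{Z}}(n) \leq \frac{(n-1)i_0}{b_{i_0}^*}$ and $O_{\mathbb{Z}}(n) \geq \frac{(n-1)i_0}{b_{i_0}^*}$, but the argument is the same.
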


\begin{proof}
$$S_{n}     = 1 - \frac{\max \{ \Sigma_{i=1}^n \fff(c_i) \} }{n-1} = 1 - \frac{(n-1)i_0}{(n-1)b_{i_0}^*} =  1 - \frac{i_0}{b_{i_0}^*}$$
\end{proof}

We now turn our attention to the general case, where $b^{*}_{i_{0}}$ may not divide $n-1$.
We will only consider values of $n$ for which $n > b_{i_{0}}^{*}$.
As in Section~\ref{sec:upper-bound}, let $k_{i_{0}}$ and $r$ be the quotient and remainder
when dividing $n-1$ by $b_{i_{0}}^{*}$, so that
\begin{equation}
\label{eq:Dividing}
n-1 = k_{i_{0}}b_{i_{0}}^{*} + r, \ \ \ \\ \ 0 \leq r < b_{i_0}^{*}
\end{equation}
Let $c_j \geq 0$ ($1 \leq j \leq n$) be integers with $\sum_{j=1}^{n} c_{j}= n - 1$ that maximize 
$\sum_{j=1}^{n} \fff(c_j)$.
We will denote $n-r$ by $n'$.  
Let $c_j' \geq 0$ ($1 \leq j \leq n'$) be integers with $\sum_{j=1}^{n'} c'_{j}= n' - 1$ that maximize 
$\sum_{j=1}^{n'} \fff(c'_j)$.

\begin{clm}
\label{clm:sum}
$\sum_{j=1}^{n} \fff(c_j) \leq \sum_{j=1}^{n'} \fff(c'_j) + r$.
\end{clm}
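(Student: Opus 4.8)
The plan is to squeeze both sides of the inequality against the single quantity $k_{i_0} i_0$, absorbing the difference into the additive error $r$ and into one more term that is harmless precisely because $i_0\le b_{i_0}^*$. First I would record two arithmetic facts. By~\eqref{eq:Dividing} we have $n'-1 = (n-1)-r = k_{i_0}b_{i_0}^*$, so $b_{i_0}^*$ divides $n'-1$, with quotient $k_{i_0}$. And since, by Notation~\ref{notation:bridge indices}, $0<b_1^*<b_2^*<\cdots<b_g^*$ is a strictly increasing sequence of integers, an easy induction yields $b_i^*\ge i$ for every $i$; in particular $i_0\le b_{i_0}^*$, which I will use at the very end. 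Given these, it suffices to establish the two inequalities
\[
\sum_{j=1}^{n}\fff(c_j)\ \le\ k_{i_0} i_0 + r \qquad\text{and}\qquad \sum_{j=1}^{n'}\fff(c_j')\ \ge\ k_{i_0} i_0,
\]
since together they give $\sum_{j=1}^{n}\fff(c_j)\le k_{i_0} i_0 + r\le \sum_{j=1}^{n'}\fff(c_j') + r$.

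For the second inequality I would exhibit a good competitor for the $n'$-term maximization: take the length-$n'$ sequence with $k_{i_0}$ entries equal to $b_{i_0}^*$ and the remaining $n'-k_{i_0}$ entries equal to $0$. Its entries are non-negative and sum to $k_{i_0}b_{i_0}^* = n'-1$, and $k_{i_0}=(n'-1)/b_{i_0}^*<n'$ (using $b_{i_0}^*\ge 1$), so there are enough entries; hence it is admissible. By Proposition~\ref{pro:fk}, $\fff(b_{i_0}^*)\ge i_0$, and $\fff(0)=0$, so this admissible sequence has objective value at least $k_{i_0}i_0$; by maximality of $c_1',\dots,c_{n'}'$ this gives $\sum_{j=1}^{n'}\fff(c_j')\ge k_{i_0}i_0$.

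For the first inequality I would use that $K$ is $m$-small, so that Proposition~\ref{pro:fk} gives $\fff(b_i^*)=i$ for all $i$; consequently, as spelled out in the discussion preceding the claim, $\sum_{j=1}^{n}\fff(c_j)$ equals the optimal value of Problem~\ref{prob:rewritten}. Every feasible integer vector for Problem~\ref{prob:rewritten} is a fortiori a feasible point of its real relaxation Problem~\ref{prob:over-R}, whose optimal value is $\frac{(n-1)i_0}{b_{i_0}^*}$ by Lemma~\ref{lem:over-R}. Hence
\[
\sum_{j=1}^{n}\fff(c_j)\ \le\ \frac{(n-1)i_0}{b_{i_0}^*}\ =\ \frac{(k_{i_0}b_{i_0}^* + r)\,i_0}{b_{i_0}^*}\ =\ k_{i_0}i_0 + \frac{r\,i_0}{b_{i_0}^*}\ \le\ k_{i_0}i_0 + r,
\]
the last step using $r\ge 0$ and $i_0\le b_{i_0}^*$. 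Combined with the previous paragraph this proves the claim.

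The only delicate point I anticipate is the passage ``$\sum_{j=1}^{n}\fff(c_j)$ equals the optimal value of Problem~\ref{prob:rewritten}'': one must check that the bound on the number of summands is never binding, which holds because every $b_i^*$ is a positive integer, so a feasible vector $(k_1,\dots,k_g)$ has $\sum_i k_i\le \sum_i k_i b_i^* \le n-1 < n$, leaving room to pad with zeros (and one further entry below $b_1^*$, on which $\fff$ vanishes). It is also at this point that $m$-smallness of $K$ is genuinely used, through the equality $\fff(b_i^*)=i$ of Proposition~\ref{pro:fk}; everything else in the argument goes through for arbitrary knots.
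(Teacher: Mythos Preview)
Your proof is correct, but it takes a genuinely different route from the paper's argument. The paper proves the claim directly and elementarily: starting from the maximizer $c_1,\dots,c_n$, it subtracts $1$ from positive entries $r$ times (using $\fff(c)-\fff(c-1)\le 1$ at each step), then drops $r$ zeros to obtain an admissible length-$n'$ sequence; maximality of $c_1',\dots,c_{n'}'$ then gives the inequality. This uses only the basic properties $\fff(0)=0$ and $\fff(c+1)\le \fff(c)+1$, and in particular does \emph{not} require $K$ to be m-small or invoke the linear-programming lemma. Your approach instead sandwiches both sides against the pivot value $k_{i_0}i_0$: the lower bound on $\sum \fff(c_j')$ comes from an explicit competitor (fine for any knot), while the upper bound on $\sum \fff(c_j)$ passes through the real relaxation of Problem~\ref{prob:rewritten} via Lemma~\ref{lem:over-R}, together with $i_0\le b_{i_0}^*$. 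This works, but it imports the equality $\fff(b_i^*)=i$ from Proposition~\ref{pro:fk} (hence m-smallness) and the LP machinery, none of which the paper needs for this particular step. In context both are valid since $K$ is m-small; the paper's argument is more self-contained, while yours has the virtue of making the role of $k_{i_0}i_0$ explicit (indeed, by Lemma~\ref{lem:divisible} your inequality~(2) is actually an equality). One small remark: for your inequality~(1) you only need $\sum\fff(c_j)\le\text{opt(Problem~\ref{prob:over-R})}$, so the ``delicate point'' you flag about the converse direction is not actually needed.
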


\begin{proof}
Starting with the sequence $c_1,\dots,c_n$, we obtain a new sequence by subtracting one from exactly one
$c_j$ (with $c_j > 0$).  Let $c_{j}'''$ be a sequence of non negative integers obtained by repeating this process $r$
times.  Then $\sum_{j=1}^{n} c_{j}''' = n-1-r = n'-1$.  Let $c_{j}''$
be the sequence obtained from $c_{j}'''$ by removing $r$ zeros (note that this is possible as there indeed are at least $r$ zeros).
We get:
\begin{align*}
\sum_{j=1}^{n'} \fff(c'_j) + r &\geq \sum_{j=1}^{n'} \fff(c''_{j}) + r  & \mbox{ since } c_{j}'  \mbox{ maximizes}\\
&= \sum_{j=1}^{n} \fff(c'''_{j}) + r & \mbox{ since } \fff(0) = 0 \\
&\geq \sum_{j=1}^{n} \fff(c_{j}) & \mbox{ since } \fff(c) + 1 \geq \fff(c+1)
\end{align*}

This proves Claim~\ref{clm:sum}.
\end{proof}

Note that $b_{i_{0}}^{*} | n'-1$ and so we may apply Lemma~\ref{lem:divisible}
to calculate $S_{n'}$.  We get (in the first line we use Equation~(\ref{eq:S_n}) from Page~\pageref{eq:S_n}):

\begin{align*}
S_{n} &=  1 - \frac{\max \{ \Sigma_{i=1}^n f(c_i) \} }{n-1} & \mbox{Equation}~(\ref{eq:S_n})\mbox{ for } S_{n} \\
&\geq  1 - \frac{\max \{ \Sigma_{j=1}^{n'} f(c'_j) + r \}}{n-1} &  \mbox{Claim~}\ref{clm:sum} \\
&=  1 - \frac{n'-1}{n-1}\frac{\max \{ \Sigma_{j=1}^{n'} f(c'_j) \} }{n'-1} -\frac{r}{n-1} & \\
&=  \frac{n'-1}{n-1}\Big(1 - \frac{\max \{ \Sigma_{j=1}^{n'} f(c'_j) \} }{n'-1}\Big)  + \Big(1 - \frac{n'-1}{n-1}\Big) - \frac{r}{n-1} & \\
&= \frac{n'-1}{n-1}S_{n'} + \Big(1 - \frac{n'-1}{n-1}\Big) - \frac{r}{n-1} & \mbox{Equation}~(\ref{eq:S_n})\mbox{ for } S_{n'} \\ 
&=  \frac{n'-1}{n-1}\Big(1 - \frac{i_{0}}{b^{*}_{i_{0}}}\Big) +  \Big(1 - \frac{n'-1}{n-1}\Big) - \frac{r}{n-1}& \mbox{Lemma}~\ref{lem:divisible} \\
&=  \frac{n'-1}{n-1}\Big(1 - \frac{i_{0}}{b^{*}_{i_{0}}}\Big) +  \Big(1 - \frac{n'+r-1}{n-1}\Big) & \\
&=  \frac{n-r-1}{n-1}\Big(1 - \frac{i_{0}}{b^{*}_{i_{0}}}\Big) & \mbox{Substituting } n' = n-r
\end{align*}
Recall that in the proof of Proposition~\ref{pro:upper-bound} (see Page~\pageref{equ:UpperBound})
we proved Equation~(\ref{equ:UpperBound}) which says (recall that $k_{i_0}$ was defined in Equation~(\ref{eq:Dividing})
above):
$$S_n < 1 - \frac{i_{0}}{b_{i_{0}}^*} \frac{k_{i_{0}}}{k_{i_{0}}+1}$$
Combining these facts we obtain:  
$$  \frac{n-r-1}{n-1}\Big(1 - \frac{i_{0}}{b^{*}_{i_{0}}}\Big)  \leq S_{n}  < 1 - \frac{i_{0}}{b_{i_{0}}^*} \frac{k_{i_{0}}}{k_{i_{0}}+1}$$
By Equation~(\ref{eq:Dividing}) above, $r < b^{*}_{i_{0}}$ and $\lim_{n \to \infty} k_{i_{0}} = \infty$.  
We conclude that as $n \to \infty$ both bounds limit on $1 - i_{0}/b^{*}_{i_{0}}$,
and thus $\lim_{n \to \infty} S_{n}$
exists and equals $1 - i_{0}/b^{*}_{i_{0}}$.

This completes the proof of Theorem~\ref{thm:main}.
\end{proof}


\end{document}